%% file: main.tex
\documentclass[10pt]{tsoart}

\begin{document}

\title[Higher smooth structure sets of $\CC P^n$]{Higher smooth surgery structure sets \\ of complex projective spaces, part II}

\author{Samuel Kalu\v zn\'y}
\author{Tibor Macko}

\subjclass[2020]{Primary: 57R65, 57S25} 

\keywords{complex projective space, higher smooth surgery structure set, surgery, Arf invariant, stable stems, Toda brackets} 

\address{Faculty of Mathematics, Physics, and Informatics, Comenius University, Mlynsk\'a dolina,
SK-824 48, Bratislava, Slovakia} \email{tibor.macko@fmph.uniba.sk} \email{samuel.kaluzny@fmph.uniba.sk} 

\address{Institute of Mathematics, Slovak Academy of Sciences, \v Stef\'anikova 49, Bratislava, SK-81473, Slovakia} \email{macko@mat.savba.sk}

\thanks{This work was supported by grants VEGA 1/0425/25, UK/1192/2025, UK/3116/2024 and UK/197/2023. Parts of this work will be included in the PhD thesis of S.K} 

\date{\today}

\begin{abstract}
    We determine the torsion part of the higher smooth surgery structure sets of complex projective spaces (up to some extension problems) and complete the description of the forgetful map to their topological versions in low dimensions. In addition, an application to mapping class groups of complex projective spaces in low dimensions is presented.
\end{abstract}

\maketitle

\input{intro}

\input{theorems}

\input{norminv}

\newpage

\input{obstr}

\newpage

\input{proofs}

\bibliography{bibliography}  
\bibliographystyle{alpha}

\makeatletter
\enddoc@text     
\gdef\enddoc@text{}  
\makeatother

\newpage
\appendix
\input{appendix}

\end{document}

%% file: intro.tex
\section{Introduction}
\label{sec:introduction}

In part I \cite{kaluzny2026highersmoothsurgerystructure}, we proved that when $2n+k\geq5$, there are isomorphisms 
\begin{align}
\label{eqn:smooth-str-set-iso}
    \sS_{\partial}^{\DIFF}(\cpd)\cong\Z^{t_{n,k}}\oplus T_{n,k},
\end{align}
where 
\begin{align*}
    t_{n,k}=
        \begin{cases}
            0 & \text{if $k$ is odd},\\
            \lfloor\frac{n}{2}\rfloor+1 & \text{if $k\equiv0\mod4$ and $n$ is odd}, \\
            \lfloor\frac{n}{2}\rfloor & \text{otherwise},
        \end{cases}
\end{align*}
and $T_{n,k}$ is finite abelian; see \cite[Def. 2.1, Thm. 2.2]{kaluzny2026highersmoothsurgerystructure}. The goal of this article is to compute the torsion subgroups $T_{n,k}$ for $1\leq n,k\leq6$ and $2n+k\geq5$. 

Recall that, by a standard result, there are isomorphisms 
\begin{align} \label{eqn:top-str-set-of-cp-n-times-d-k}
    \textstyle\sS_{\partial}^{\TOP}(\cpdt)\xrightarrow{\cong}
        \Z^{t_{n,2l}}\oplus\Z_2^{n-t_{n,2l}} \quad \textup{and} \quad \sS_{\partial}^{\TOP} (\cp^n\times \text{D}^{2l+1})\xrightarrow{\cong} 0,
\end{align}
and analogous isomorphisms for $\TOP$-normal invariants, given by so-called splitting invariants; see \cite[Sec. 6]{kaluzny2026highersmoothsurgerystructure}. These fit, together with the isomorphisms obtained in the smooth case, into a diagram
\begin{equation}
\label{eqn:diagram-forgetful-map}
    {\footnotesize
    \begin{tikzcd}
        \Z^{t_{n,2l}}\oplus T_{n,2l} \arrow[r,"\cong"] \arrow[d,hookrightarrow,"E_{n,l}"] & \sS_{\partial}^{\DIFF}(\cpdt) \arrow{d}{\eta_{n,2l}^{\DIFF}} \arrow{r}{F_{\cpdt}} & \sS_{\partial}^{\TOP}(\cpdt) \arrow{d}{\eta_{n,2l}^{\TOP}} \arrow{r}{\cong} & \Z^{t_{n,2l}}\oplus\Z_2^{n-t_{n,2l}} \arrow[d,hookrightarrow] \\
        \Z^{t_{n,2l}'+\epsilon_{2l}}\oplus T_{n,2l}' \arrow[bend right=10,"F_{n,l}'"]{rrr} \arrow[r,"\cong"]  & \sN_{\partial}^{\DIFF}(\cpdt) \arrow{r}{} & \sN_{\partial}^{\TOP}(\cpdt) \arrow{r}{\cong} & \Z^{t_{n,2l}'+\epsilon_{2l}}\oplus\Z_2^{n+1-t_{n,2l}'-\epsilon_{2l}}
    \end{tikzcd}
    }
\end{equation}
Note that the isomorphisms~\eqref{eqn:smooth-str-set-iso} are not uniquely determined, choices are involved. After making the required choices, the forgetful map $F_{\cpdt}$ is described by a $(2\times2)$-block matrix $F_{n,l}$ given by a product:
\begin{equation}
\label{eqn:matrix-forgetful-map}
    \begin{pmatrix}
        A_{n,l} & 0 \\
        B_{n,l} & C_{n,l}
    \end{pmatrix}
    =F_{n,l}=F_{n,l}'\cdot E_{n,l}=
    \begin{pmatrix}
        A_{n,l}' & 0\\
        B_{n,l}' & C_{n,l}'
    \end{pmatrix}\cdot
    \begin{pmatrix}
        P_{n,l} & 0 \\
        Q_{n,l} & R_{n,l}
    \end{pmatrix}.
\end{equation}
We fix a specific isomorphism \eqref{eqn:smooth-str-set-iso} by making choices as specified in \cite[Remark 4.13]{kaluzny2026highersmoothsurgerystructure}, the table in Lemma~\ref{lema:coker-J-CP-values} and the proofs of Theorems~\ref{thm:main-theorem-3},\ref{thm:main-theorem-5} and \ref{thm:main-theorem-4} at the end of Chapter~\ref{sec:surgery-obstruction}. With respect to these choices, $A_{n,l}:\Z^{t_{n,2l}}\to\Z^{t_{n,2l}}$ was determined in \cite[Thm. 2.3, 2.4]{kaluzny2026highersmoothsurgerystructure} as a product $A_{n,l}'\cdot P_{n,l}$, up to an indeterminacy of index $2$ in $P_{n,l}$ when $n+l$ is odd. In this paper, we solve this indeterminacy and determine the matrices $B_{n,l}:\Z^{t_{n,2l}}\to\Z_2^{n-t_{n,2l}}$ and $C_{n,l}:T_{n,2l}\to\Z_2^{n-t_{n,2l}}$, thus giving a full characterization of $F_{\cpdt}$.

%% file: theorems.tex
\section{The Main Theorems} \label{sec:main-theorems}

In this section, we state the main theorems of this paper. In these statements, we suppose that $2n+k\geq5$.
\begin{thm}
\label{thm:main-theorem-1}
    The values of the torsion subgroups $T_{n,k}$ in the range $1\leq n,k\leq6$ are summarized in the following table up to some extension problems:
    \begin{table}[ht!]
        \centering
        \begin{tabular}{|c|c|c|c|c|c|c|} \hline
            \diaghead{\theadfont aaaaaaaaaa}%
            {$k$}{$n$} & $1$ & $2$ & $3$ & $4$ & $5$ & $6$ \\ 
            \hline
            $0$ & $\blank$ & $\blank$ & $\Z_2$ & $\Z_4$ & $\Z_2^2\oplus\Z_3$ & $\Z_2\oplus\Z_3$ \\
            \hline
            $1$ & $\blank$ & $\cdot$ & $\textcolor{blue}{\Z_4}$ & $\cdot$ & $\textcolor{blue}{\Z_2}$ & $\cdot$ \\
            \hline
            $2$ & $\blank$ & $\Z_2$ & $\Z_2^3$ & $\Z_2\oplus\Z_3$ & $\Z_2^2\oplus\Z_3$ & $\Z_2^3$ \\
            \hline
            $3$ & $\cdot$ & $\textcolor{blue}{\Z_2}$ & $\textcolor{blue}{\Z_2}\textcolor{red}{\oplus}\Z_2^2$ & $\textcolor{blue}{\Z_2}\textcolor{red}{\oplus}\Z_2$ & $\Z_2\oplus\Z_3$ & $\textcolor{blue}{\Z_2}\textcolor{red}{\oplus}\Z_2\oplus\Z_3$\\
            \hline
            $4$ & $\cdot$ & $\Z_4$ & $\Z_4\oplus\Z_2\oplus\Z_3$ & $\Z_4\oplus\Z_3$ & $\Z_4\oplus\Z_2\oplus\Z_3$ & $\Z_4^2\oplus\Z_3$ \\
            \hline
            $5$ & $\textcolor{blue}{\Z_{28}}$ & $\cdot$ & $\textcolor{blue}{\Z_{16}}$ & $\cdot$ & $\textcolor{blue}{\Z_8}\textcolor{red}{\oplus}\Z_2$ & $\Z_4$ \\
            \hline
            $6$ & $\Z_2^2$ & $\Z_3$ & $\Z_2\oplus\Z_3$ & $\Z_2^2$ & $\Z_2^4$ & $\Z_4\oplus\Z_2^2$ \\
            \hline
        \end{tabular}
        \caption{\centering Table of $T_{n,k}$ (extension problems are highlighted in red, elements from $L_{2n+k+1}(\Z)$ in blue)}
        \label{tabletorsCP}
\end{table}
\end{thm}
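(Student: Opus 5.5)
The plan is to compute $T_{n,k}$ from the smooth surgery exact sequence for $\cpd$ rel boundary,
\begin{equation*}
L_{2n+k+1}(\Z)\to\sS_{\partial}^{\DIFF}(\cpd)\xrightarrow{\eta^{\DIFF}_{n,k}}\sN_{\partial}^{\DIFF}(\cpd)\xrightarrow{\theta}L_{2n+k}(\Z),
\end{equation*}
using the structural results of part I. First I would identify the torsion of the smooth normal invariants. We have $\sN_{\partial}^{\DIFF}(\cpd)\cong[\Sigma^k\cp^n_+, G/O]$, and this fits into the Atiyah--Hirzebruch type sequence coming from the fibration $G/O\to G/\TOP$ with fibre $\TOP/O$. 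Rationally and at odd primes other than $3$ nothing new appears in the range $2n+k\le 18$. So the torsion $T'_{n,k}$ is assembled from three ingredients:
\begin{itemize}
\item the $\Z_2$'s in $[\Sigma^k\cp^n_+,G/\TOP]$;
\item the groups $\operatorname{coker}J$ in stems up to $2n+k$, evaluated on the cells of $\cp^n$ (this is the table of Lemma~\ref{lema:coker-J-CP-values});
\item the image of $J$ contributions, visible through the $KO$-part.
\end{itemize}
Concretely, I would filter by the skeleta $\cp^{j}$ and run the cofibre sequences $\Sigma^{k}\cp^{j-1}\to\Sigma^k\cp^j\to S^{2j+k}$. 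The attaching maps are detected by $\eta$ and by Toda brackets, which determine the $d_2$ and higher differentials in the stable stems. The $\Z_3$ summands, for example, come from $\beta_1\in\pi_{10}^s$ and $\alpha_1$, and survive or die according to the $3$-primary attaching maps.

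Second, I would compute the surgery obstruction map $\theta$ restricted to torsion. On torsion, $\theta$ can only hit $L_{2n+k}(\Z)$ when $2n+k\equiv 2\pmod 4$, through the Arf invariant. Using the product formula and the formula for the Arf invariant of a normal map in terms of splitting invariants and $\operatorname{coker}J$ (Kervaire classes), I would decide which torsion normal invariants have nonzero Arf invariant. This should be the content of the sections on normal invariants and obstructions. The kernel gives the image of $\eta^{\DIFF}$ on torsion.

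Third, I would handle the left end. The action of $L_{2n+k+1}(\Z)$ has image equal to $\operatorname{coker}$ of $\theta$ on $\sN_{\partial}^{\DIFF}(\cpd\times I)$. In the cases $2n+k+1\equiv 0\pmod 4$, this is $\Z/(\text{divisibility})$. That divisibility is governed by the smallest signature realized by almost parallelizable manifolds, which gives groups such as $bP_8=\Z_{28}$ for $(n,k)=(1,5)$, and $\Z_{16}$, $\Z_8$, $\Z_4$ in the other entries. When $2n+k+1\equiv 2\pmod 4$, the contribution is $\Z_2$ or $0$, according to whether some normal invariant on $\cpd\times I$ has nonzero Arf invariant (Kervaire classes in dimensions $6$, $14$ versus their absence). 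These are the blue entries of the table.

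Finally, I would assemble the short exact sequence $0\to\text{(blue)}\to T_{n,k}\to\ker\theta|_{\mathrm{tors}}\to 0$. Where the splitting is not forced by orders, I would leave it open; these are the red entries. Where it is forced, I would record the resulting group.

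The main obstacle I expect is the second step together with the differentials in the first. Deciding the Arf invariant on classes coming from $\operatorname{coker}J$, such as $\eta^2$-type elements, $\nu^2$, or elements in $\pi_{14}^s$ and $\pi_{16}^s$, requires knowing the Kervaire invariant on stable stems and how it interacts with the cup product structure of $\cp^n$. The determination of nontrivial extensions such as $\Z_4$, versus $\Z_2^2$, needs Toda bracket relations like $\langle\eta,2,\eta\rangle$ and $\langle\nu,\eta,\nu\rangle$ in the cofibre sequences. I would attack it cell by cell using the known $2$-primary stems through dimension $18$.
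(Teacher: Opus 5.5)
Your architecture matches the paper's: the surgery exact sequence, the torsion of $\sN_\partial^{\DIFF}(\cp^n\times D^k)$ read off from Lemma~\ref{lema:coker-J-CP-values} together with $\pi_k(G/O)$, and the kernel of the Arf invariant on torsion via the Kervaire-class formula. For odd $k$ you also, like the paper, take $T_{n,k}$ as an extension of $\sN_\partial^{\DIFF}$ by $\coker\sigma_{n,k+1}$ and leave the red extensions open.

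The step that fails is your computation of the blue entries when $L_{2n+k+1}(\Z)\cong\Z$. The order $d$ in $\coker\sigma_{n,k+1}\cong\Z_d$ is not the minimal signature (divided by $8$) of closed almost parallelizable manifolds. That recipe gives $|bP_{2n+k+2}|\in\{28,992,8128\}$. It is correct only for $(n,k)=(1,5)$, where $\Sigma^6\cp^1_+\simeq S^8\vee S^6$ and the only $\Z$ sits on the top cell.

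For $n\geq 2$, the group $\sN_\partial^{\DIFF}(\cp^n\times D^{k+1})$ contains the free summand $\ker[\Sigma^{k+1}\cp^n,J]$. Its generators are already nontrivial on the lower cells of $\Sigma^{k+1}\cp^n$, and their signature obstructions, computed by the Hirzebruch formula against $L(\cp^n)$, are far smaller. The table accordingly has:
\begin{itemize}
\item $\Z_4$ at $(3,1)$, where your recipe gives $\Z_{28}$;
\item $\Z_2$ at $(5,1)$, $(2,3)$, $(4,3)$, $(6,3)$, where it gives $\Z_{992}$, $\Z_{28}$, $\Z_{992}$, $\Z_{8128}$;
\item $\Z_{16}$ at $(3,5)$ and $\Z_8$ at $(5,5)$, where it gives $\Z_{992}$ and $\Z_{8128}$.
\end{itemize}
The missing input is the explicit value of $\sigma_{n,k+1}$ on the chosen generators $\xi_i$ of $\ker[\Sigma^{k+1}\cp^n,J]$, and on $\pi_{k+1}(G/O)$ when $4\mid k+1$. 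The paper takes these from Part~I (Tables 5--7) and sets $d$ to be the gcd of the resulting coefficients.

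A lesser inaccuracy is in your first step. By the splitting from $G\to G/O\to BO\xrightarrow{J}BG$, the torsion of the smooth normal invariants is exactly $\coker[\Sigma^k\cp^n,\Omega J]\oplus\mathrm{tors}\,\pi_k(G/O)$. It is not assembled by adding the $\Z_2$'s of $[\Sigma^k\cp^n_+,G/\TOP]$ and image-of-$J$ pieces:
\begin{itemize}
\item The $G/\TOP$ classes are targets of the forgetful map. Some are hit by torsion such as $\nu^2$ and $\eta^2$, others by free classes such as $\xi_1$ when $l=1$.
\item The image of $J$ dies in $G/O$.
\item The only $KO$-type torsion, the Adams classes $\mu_{2n+k}$, already lies inside $\coker[\Sigma^k\cp^n,\Omega J]$ by \eqref{eq:coker-Jp-CP}.
\end{itemize}
Counting these separately overcounts already for $\cp^2\times D^2$. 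Likewise, the skeletal spectral sequence should be run for $Y=G$ and $Y=\coker J_p$ after Sullivan's splitting, which is what isolates the torsion; the $\TOP/O$-fibration is not needed.
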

In the next theorems, we characterize the forgetful map $F_{\cpdt}$ in the range $1\leq n,2l\leq6$; see Diagram \eqref{eqn:diagram-forgetful-map} and \eqref{eqn:matrix-forgetful-map}. The characterization is in terms of matrices with respect to choices described above.
\begin{thm}
\label{thm:main-theorem-2}
    For $l=1$, the entries in $B_{n,l}'$ are summarized in Table~\ref{tab:matrix-B_n,1'}. For $l=2,3$, we have: 
    \begin{align}
    \label{eqn:values-of-Bn,l}
        B_{n,l}'= 0_{(t_{n,2l}'+\epsilon_{2l})\times(n+1-t_{n,2l}'-\epsilon_{2l})}.
    \end{align}
    The entries in the matrices $C_{n,l}'$ are summarized in Table \ref{tab:matrix-C_n,l'}.
\end{thm}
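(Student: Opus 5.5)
The plan is to work entirely on the level of normal invariants. There $F_{n,l}'$ is the forgetful map $\sN_{\partial}^{\DIFF}(\cpdt)\to\sN_{\partial}^{\TOP}(\cpdt)$, i.e. the map
\[
[\Sigma^{2l}\CC P^n_+,G/O]\to[\Sigma^{2l}\CC P^n_+,G/TOP]
\]
induced by $G/O\to G/TOP$, written in the chosen bases. The $\TOP$ side is identified with $\Z^{t'+\epsilon}\oplus\Z_2^{n+1-t'-\epsilon}$ via splitting invariants. These are surgery obstructions of the transverse inverse images of $\CC P^j\times D^{2l}\subset\CC P^n\times D^{2l}$, so they lie in $L_{2j+2l}(\Z)$.

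For the rows landing in $\Z_2$, i.e. $2j+2l\equiv 2\pmod 4$, the splitting invariant is a codimension-$(2n-2j)$ Kervaire--Arf invariant. In the smooth case this is computed by the Brumfiel--Sullivan type formula
\[
\mathrm{Arf}=\langle V^2\cdot \ell^*k,\,[\CC P^j\times D^{2l},\partial]\rangle,
\]
where $k=\sum k_{4i+2}$ is the Kervaire class pulled back along $G/O\to G/TOP$ and $V$ is the total Wu class of $\CC P^j$. I would therefore compute each entry of $B'_{n,l}$ and $C'_{n,l}$ by evaluating this formula on generators.

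\textbf{Free generators ($B'$).} The free generators of $\sN^{\DIFF}$ were chosen in part~I (Remark~4.13) as lifts of elements of $\widetilde{KO}(\Sigma^{2l}\CC P^n)\otimes\Z_{(2)}$ (Sullivan splitting), via $BSO$ and $\ker J$. Their images in $G/TOP$ factor through $BO\to G/O\to G/TOP$. So the Kervaire classes of these generators are determined by the composite $BO\to G/TOP\to K(\Z_2,4i+2)$, which is computed by Wu classes and Stiefel--Whitney classes of the corresponding bundle.

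For $l=2,3$, the suspension $\Sigma^{2l}$ with $2l\ge 4$ makes all cup products in $\Sigma^{2l}\CC P^n_+$ vanish. The relevant Stiefel--Whitney polynomials then reduce to primitive classes $w_{2i+1}$-type terms, which vanish on complex/$KO$ generators. This gives \eqref{eqn:values-of-Bn,l}. For $l=1$ some products survive, and I would tabulate them case by case using $w(\text{realification of }H^{\otimes m})$ together with $Sq$ on $\CC P^n$. That yields Table~\ref{tab:matrix-B_n,1'}.

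\textbf{Torsion generators ($C'$).} The torsion part $T'_{n,2l}$ comes from $\operatorname{coker}J$ classes, as in Lemma~\ref{lema:coker-J-CP-values}. These are represented by stable maps $\Sigma^{2l}\CC P^n\to S^{m}\xrightarrow{\alpha}SG$ with $\alpha\in\pi_m^s$. The image in $G/TOP$ is the composite with $SG\to G/TOP$, whose Kervaire component detects exactly the Arf-invariant elements $\eta^2$, $\nu^2$, $\sigma^2$ and kills the rest 2-locally. The $\Z_2$ row indexed by $j$ is then nonzero iff the class restricted to $\Sigma^{2l}\CC P^j$ has an $\eta^2$-, $\nu^2$- or $\sigma^2$-component on the top cell. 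When the representing maps are given via attaching maps of $\CC P^n$ or Toda brackets, I would decide this by computing $\langle\cdot,\cdot,\cdot\rangle$ compositions, e.g. whether $\eta\circ\widetilde\eta$ equals $\eta^2$ on the relevant cell. Odd torsion maps to zero automatically.

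\textbf{Main obstacle.} The hard step is the torsion case. One must determine whether specific cokernel-of-$J$ generators, defined only up to indeterminacy by Toda brackets over the cells of $\Sigma^{2l}\CC P^n$, map nontrivially to the Kervaire classes in low cells. I would first try to pin down the representatives using the $e$-invariant and the Adams spectral sequence of $\CC P^n$, reading off the $h_1^2$, $h_2^2$, $h_3^2$ components from the $\mathrm{Ext}$ charts. Only if that is inconclusive would I fall back to explicit bracket manipulations with the Toda tables.
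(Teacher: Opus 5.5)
Your framework is the paper's: the entries of $B'_{n,l}$ and $C'_{n,l}$ are Arf invariants of the restrictions to $\cp^j\times D^{2l}$ ($j+l$ odd). They are computed by $\langle V^2_{\cp^j}\cup\widetilde H^*(f)(\widetilde K),[\cp^j\times D^{2l},\partial]\rangle$. However, the two concrete mechanisms you propose for evaluating this formula are wrong and would produce wrong tables.

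\textbf{Free part.} Your reason for $B'_{n,l}=0$ when $l=2,3$ fails.
\begin{itemize}
\item $\Sigma^{2l}\cp^n_+$ is a suspension for every $l\geq 1$, so cup products vanish for $l=1$ as well.
\item What survives of the pulled-back Kervaire class is $\sum_r w_{2^r-2}$, in the even degrees $2,6,14$, not odd-degree classes.
\item These classes do not vanish on $KO$-generators. Over $\Sigma^2\cp^n$ one has $w_6(\mu_1)=x^2\times y$ and $w_{14}(\mu_1)=x^6\times y$, and this is exactly what produces the nonzero first column of $B'_{n,1}$.
\end{itemize}
To make this step work you need three inputs you do not supply.
\begin{enumerate}
\item $\widetilde k_{4i+2}=0$ in $H^*(G/O;\Z_2)$ unless $4i+2=2^r-2$ (Brumfiel--Madsen--Milgram). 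Without this, $w_{10}(\mu_1)=x^4\times y$ would also enter for $l=1$.
\item The Brumfiel--Madsen formula for their specific Adams-conjecture map $\alpha_2\colon BO\to G/O_{(2)}$. It gives $\alpha_2^*\widetilde k_{2^r-2}\equiv w_{2^r-2}$ modulo decomposables (Lemma~\ref{KSWclass}).
\item The explicit lift $p_{(2)}=\alpha_{(2)}\circ(\Psi^3-\id)^{-1}_{(2)}$. Stiefel--Whitney classes must be evaluated on $(\Psi^3-\id)^{-1}_{(2)}\xi_j$, not on $\xi_j$. For instance, in $[\Sigma^2\cp^2,BO]_{(2)}$ one has $\xi_1=(\Psi^3-\id)(3\mu_1)=24\mu_1$, which has trivial Stiefel--Whitney classes, whereas $3\mu_1$ does not. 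Note also that $B'$ depends on this choice of lift.
\end{enumerate}
With these inputs, the vanishing for $l=2,3$ is an explicit computation. $W(\mu_2)=1+(x^2+x^4+x^6)\times y^2$ has nothing on the cells $x\times y^2$ and $x^5\times y^2$ (degrees $6$ and $14$). $W(\mu_3)$ and $W(\mu_l\mu_0^m)$ are trivial. The extra free generator $\gamma_4$ for $l=2$ contributes nothing for degree reasons.

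\textbf{Torsion part.} Your test (row $j$ is nonzero iff the restriction has an $\eta^2$-, $\nu^2$- or $\sigma^2$-component on the top cell) drops the factor $V^2_{\cp^j}$ from the formula you wrote yourself. It gives wrong entries: a Kervaire class sitting on a lower cell $x^i\times y^l$ contributes to row $j$ whenever $x^{j-i}$ occurs in $V^2_{\cp^j}$. Two examples:
\begin{itemize}
\item The column of $\eta^2\in\pi_2(G/O)$ in $C'_{n,1}$ is $1$ in every row, not only for $j=0$, because $V^2_{\cp^j}$ contains $x^j$ for even $j$.
\item $\underline{\nu^2}_4$ has Arf invariant $1$ on $\cp^5\times D^4$ via $x^4\cdot(x\times y^2)=x^5\times y^2$, even though its top-cell component is $0$.
\end{itemize}

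The second example also shows that the obstacle you single out is misidentified. The underlined generators are only defined modulo the kernel of restriction to a smaller $\cp^m$, and $C'_{n,l}$ depends on which representative is taken. There is nothing to pin down; the paper simply uses this freedom. It chooses $\underline{\nu^2}_4$ with vanishing $\widetilde k_{14}$-component, which is possible because $\sigma^2\circ\Sigma^4j_4$ lies in the indeterminacy and has nonzero $\widetilde k_{14}$-component.

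Beyond that choice, only low-degree data are needed:
\begin{itemize}
\item an extension restricts to the original element on the bottom cells;
\item $\widetilde H^*(\Sigma^{2l}i_{m,n};\Z_2)$ is an isomorphism in low degrees;
\item $\widetilde k_{10}$ vanishes on $G/O$.
\end{itemize}
So the Kervaire invariants of $\eta^2,\nu^2,\sigma^2,\kappa$ suffice. No $e$-invariant, Adams spectral sequence or Toda bracket analysis is required.
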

The matrix $P_{n,l}$ from \eqref{eqn:matrix-forgetful-map} was already determined in \cite[Thm. 2.4]{kaluzny2026highersmoothsurgerystructure}, up to an indeterminacy of index $2$. Here, we solve this indeterminacy. We also compute the matrices $Q_{n,l}$ and $R_{n,l}$.
\begin{thm}
\label{thm:main-theorem-3}
    For odd $n+l$, $P_{n,l}$ is the identity matrix.
\end{thm}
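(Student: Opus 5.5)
We have to determine $P_{n,l}$, the free part of the map $E_{n,l}$, i.e. the restriction of $\eta^{\DIFF}_{n,2l}$ to the free summand $\Z^{t_{n,2l}}$ of $\sS^{\DIFF}_\partial(\cpdt)$, written in the splitting-invariant coordinates of the normal invariants. Part I \cite[Thm. 2.4]{kaluzny2026highersmoothsurgerystructure} already gives $P_{n,l}$ up to an index-$2$ indeterminacy when $n+l$ is odd. The plan is to remove this indeterminacy by using the freedom in the choice of the isomorphism \eqref{eqn:smooth-str-set-iso}, together with a parity computation that shows which candidate is realised.

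\textbf{Step 1: isolate the ambiguous entry.} We recall from part I how the free generators of $\sS^{\DIFF}_\partial(\cpdt)$ were chosen: they are lifts, along the surgery exact sequence, of elements of $\sN^{\DIFF}_\partial(\cpdt)\otimes\Q$ with prescribed splitting invariants. For $n+l$ odd, the top splitting invariant lies in dimension $2n+2l\equiv 2 \pmod 4$. This is exactly the place where the normal invariant, after rationalisation, sees $L_{2n+2l}(\Z)=\Z_2$ rather than $\Z$. Consequently the indeterminacy is a single ambiguity: one free generator may differ from the intended one by an element whose image in the free quotient is divisible by $2$ only up to a possible torsion shift. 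We make this precise by writing the two candidate matrices, $\mathrm{Id}$ and $\mathrm{Id}$ with one diagonal entry doubled (or a neighbouring column changed). We then identify which splitting invariant, a Kervaire--Arf type $\Z_2$ class in codimension $4j+2$, decides between them.

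\textbf{Step 2: decide the entry via the surgery obstruction.} We exploit the choices left open in \cite[Remark 4.13]{kaluzny2026highersmoothsurgerystructure}. A normal invariant in $\sN^{\DIFF}_\partial(\cpdt)$ lifts to the structure set if and only if its surgery obstruction in $L_{2n+2l}(\Z)$ vanishes. We therefore compute the obstruction of the candidate normal invariant realising the identity column, using the formula for the surgery obstruction in terms of splitting invariants and characteristic classes established in Section~\ref{sec:surgery-obstruction}. In the case $2n+2l\equiv 2\pmod 4$ this obstruction is a sum of Arf invariants weighted by $\mathcal{L}$-class coefficients of $\cp^n$. If it vanishes, the identity column is realised, and we may choose the generator accordingly, which gives $P_{n,l}=\mathrm{Id}$. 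If instead the obstruction is nonzero, we first adjust the candidate by a torsion element of $T'_{n,2l}$ whose Arf component cancels it; its existence comes from the image of $J$ / coker $J$ table in Lemma~\ref{lema:coker-J-CP-values}. Since such an element maps to zero in the free part, the column stays the identity.

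\textbf{Main obstacle.} The hard step is the parity computation in Step 2: showing that the mod-$2$ surgery obstruction of the rational lift can always be killed without changing the free coordinates. This requires knowing that the relevant Kervaire-type class in $\sN^{\DIFF}$ (coming from $\pi^s_*$ via coker $J$, e.g. $\eta^2$, $\nu^2$ or Toda-bracket elements) maps nontrivially to $L_{2n+2l}(\Z)$ in the range $1\le n,2l\le 6$. We intend to check this case by case using the table of Lemma~\ref{lema:coker-J-CP-values} and the known Arf invariants of low stems.
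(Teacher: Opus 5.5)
Your plan is essentially the paper's argument. Lemma~\ref{obstrfreeodd} shows the obstruction on the free generators is $x_1$ for $l=1$ and $0$ for $l=2,3$, and for $l=1$ the paper replaces $\xi_1$ by $\xi_1+\eta^2$, with $\eta^2\in\pi_2(G/O)$ of Arf invariant one (Lemma~\ref{lema:Arf-inv-on-sphere-part}), so the free block of $\ker\sigma_{n,2l}$ is the identity; your coker-$J$ correctors $\nu^2,\underline{\nu^2}_2,\underline{\nu^2}_4$ would also work, by Lemma~\ref{obstrtors}. Two corrections: the obstruction comes from squared Wu classes and $\widetilde{K}$ (on the free part, Stiefel--Whitney classes of $(\Psi^3-\mathrm{id})^{-1}$-preimages), not $\mathcal{L}$-class coefficients; and your ``main obstacle'' is not what is needed in general, since for $(n,l)=(3,2)$ every torsion class has zero obstruction and it is the vanishing on the free part that gives $P_{3,2}=\mathrm{Id}$.
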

\begin{thm}
\label{thm:main-theorem-5}
    The matrix $Q_{n,l}$ is zero unless $l=1$ and $n$ is even, when its entries are as summarized in Table~\ref{tab:matrix-Q_n,1}.
\end{thm}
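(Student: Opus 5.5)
The matrix $Q_{n,l}$ is the lower-left block of $E_{n,l}$, i.e.\ of the normal invariant map $\eta^{\DIFF}_{n,2l}$ expressed in the chosen splittings. It records the torsion component in $T_{n,2l}'$ of the image of a free generator of $\sS^{\DIFF}_\partial(\cpdt)$. The plan is to go back to the construction in part I of the isomorphism $\sS_{\partial}^{\DIFF}(\cpdt)\cong\Z^{t_{n,2l}}\oplus T_{n,2l}$. In that construction the free generators are lifts of elements of $\ker\big(\theta\colon \sN^{\DIFF}_\partial(\cpdt)\to L_{2n+2l}(\Z)\big)$, chosen according to \cite[Remark 4.13]{kaluzny2026highersmoothsurgerystructure}. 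A lift is only defined up to the image of $L_{2n+2l+1}(\Z)$ and up to the choice of normal invariant in the kernel, so $Q_{n,l}$ measures whether a free kernel element can be chosen inside the free summand $\Z^{t'_{n,2l}+\epsilon_{2l}}$. The first step is therefore to describe, via the splitting $\sN^{\DIFF}_\partial(\cpdt)\cong [\Sigma^{2l}\CC P^n_+, G/O]$, the free part as coming from $KO$/$\mathrm{BSO}$-type classes. The torsion part $T'_{n,2l}$ comes from $\mathrm{coker}\,J$ contributions, as tabulated in Lemma~\ref{lema:coker-J-CP-values}.

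Next I would compute the surgery obstruction $\theta$ restricted to each summand, using the formulas of Chapter~\ref{sec:surgery-obstruction}. On free classes $\theta$ is a Hirzebruch-type signature formula. On torsion classes from coker $J$ it is an Arf/Kervaire-type invariant, which vanishes unless the dimension and degree are of the Kervaire form. Then $Q_{n,l}=0$ follows whenever $\theta$ restricted to the torsion part is zero, or more generally whenever the free kernel generators of $\theta$ can be chosen with zero torsion coordinate. This is automatic when $2n+2l\equiv 2 \pmod 4$ with vanishing Arf contributions. For $l=2,3$, I expect the torsion in $T'_{n,2l}$ to lie in the kernel of $\theta$, using Theorem~\ref{thm:main-theorem-2}, where $B'_{n,l}=0$, and the corresponding vanishing of the Arf invariant in these degrees. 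One then simply chooses the free generators as pure free classes, which gives $Q_{n,l}=0$. For $n$ odd and $l=1$, the parity of the top dimension should similarly force $\theta$ on torsion to vanish or to be irrelevant.

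The remaining case is $l=1$ with $n$ even, where $\dim=2n+2\equiv 2\pmod 4$ and $L_{2n+2}(\Z)=\Z_2$. There a free kernel element may be forced to carry a nontrivial torsion coordinate. This happens because the free class alone has Arf invariant $1$, so it must be corrected by a torsion class whose Arf invariant is also $1$, for instance one detected by $\eta$- or $\nu$-type elements and Toda brackets in the stable stems. I would compute the Arf invariant of each free generator and of each torsion generator via the splitting-invariant formula (a Sullivan-type characteristic class formula) and the stable stem data. Solving the resulting $\Z_2$-linear conditions yields the entries of Table~\ref{tab:matrix-Q_n,1}.

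The main obstacle is this last computation: determining the Arf/Kervaire invariant on the coker-$J$ torsion classes of $[\Sigma^2\CC P^n_+,G/O]$ for even $n$, which needs the Toda-bracket identifications. The choice of splitting also has to be kept consistent with the choices fixed earlier, since $Q_{n,l}$ is only meaningful relative to them.
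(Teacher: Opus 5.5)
Your framework is the paper's. Since $L_{2n+2l+1}(\Z)=0$, you have $\sS_{\partial}^{\DIFF}(\cp^n\times D^{2l})\cong\ker\sigma_{n,2l}$, so $E_{n,l}$ is the inclusion of this kernel. $Q_{n,l}$ then records the torsion coordinates of the chosen free generators of the kernel.

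\textbf{The step for $l=2,3$ is wrong.} You claim that the torsion in $T'_{n,2l}$ lies in $\ker\sigma_{n,2l}$ and cite $B'_{n,l}=0$. This holds only when $n+l$ is even, where the target is $\Z$. In the cases that matter, $n+l$ odd with target $\Z_2$, it is false:
\begin{itemize}
\item Lemma~\ref{obstrtors} gives $\nu^2\mapsto1$ for $(n,l)=(1,2)$.
\item It gives $\underline{\nu^2}_4\mapsto1$ and $\sigma^2\mapsto1$ for $(5,2)$.
\item It gives $\sigma^2\mapsto1$ for $(4,3)$.
\item Lemma~\ref{lema:Arf-inv-on-sphere-part} gives the generator of $\pi_6(G/O)$ Arf invariant $1$ for every even $n$.
\end{itemize}
This is why $C'_{n,l}$ and $R_{n,l}$ are nontrivial for $l=2,3$.

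Moreover, $B'_{n,l}$ concerns the free classes, not the torsion. Its entry for $\cp^n\times D^{2l}$ itself says exactly what you need, but about the free part. The correct argument runs the other way. For $l\neq1$, Lemma~\ref{obstrfreeodd} gives $\sigma_{n,2l}(p_{(2)}\xi_i)=0$ for all $i$. For $l=2$, Lemma~\ref{lema:Arf-inv-on-sphere-part} also gives $\gamma_4\mapsto0$. So the whole free summand lies in the kernel, the $\xi_i$ (and $\gamma_4$) themselves are free generators of $\ker\sigma_{n,2l}$, and hence $P_{n,l}=\mathrm{id}$ and $Q_{n,l}=0$.

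You also describe $\theta$ as a signature formula on free classes and an Arf invariant on torsion classes. This is a conflation: which invariant applies depends only on $2n+2l \bmod 4$. When $n+l$ is odd, the free classes carry an Arf invariant too. That invariant is only defined once the splitting $p_{(2)}=[\Sigma^{2l}\cp^n,\alpha_{(2)}]\circ[\Sigma^{2l}\cp^n,\Psi^3-\id]^{-1}_{(2)}$ of Section~\ref{sec:surgery-obstruction} is fixed.

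\textbf{The case $l=1$, $n$ even.} Here the entries of Table~\ref{tab:matrix-Q_n,1} are not determined by ``solving the $\Z_2$-linear conditions''. Nothing forces a torsion correction: $2\xi_1$ also lies in the kernel, which is the index-$2$ indeterminacy in $P_{n,1}$ that Theorem~\ref{thm:main-theorem-3} removes. Even after insisting on $P_{n,1}=\mathrm{id}$, $\xi_1$ could be corrected by any torsion class of Arf invariant $1$, e.g.\ $\nu^2$ for $n=2$.

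The table reflects the specific choice $\xi_1+\eta^2$, with $\eta^2$ the generator of $\pi_2(G/O)$. Lemma~\ref{obstrfreeodd} gives $\xi_1\mapsto1$ and $\xi_2,\xi_3\mapsto0$, and Lemma~\ref{lema:Arf-inv-on-sphere-part} gives $\eta^2\mapsto1$. So $\xi_1+\eta^2,\xi_2,\xi_3$ span the free part of the kernel, which gives the single $1$ in the $\eta^2$-entry of the first column.

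Consequently, the obstacle you single out, the Kervaire invariants of coker-$J$ classes via Toda brackets, plays no role in $Q_{n,l}$. You need only $\sigma_{n,2}$ on the free generators and on the sphere summand $\pi_2(G/O)$, together with an explicit statement of which correction you choose.
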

\begin{thm}
\label{thm:main-theorem-4}
    If $n+l$ is even, $R_{n,l}$ is the identity matrix. For odd $n+l$, its entries are summarized in Table \ref{tab:matrix-R_n,l}.
\end{thm}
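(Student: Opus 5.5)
We need to prove Theorem~\ref{thm:main-theorem-4}: $R_{n,l}$ is the identity when $n+l$ is even, and is given by Table~\ref{tab:matrix-R_n,l} when $n+l$ is odd. Recall that $R_{n,l}$ is the torsion block of $E_{n,l}$, i.e.\ the restriction of $\eta^{\DIFF}_{n,2l}$ to $T_{n,2l}$ composed with projection to $T'_{n,2l}$. The plan is to read this block off the smooth surgery exact sequence
\[
L_{2n+2l+1}(\Z)\to \sS_\partial^{\DIFF}(\cpdt)\xrightarrow{\eta} \sN_\partial^{\DIFF}(\cpdt)\xrightarrow{\theta} L_{2n+2l}(\Z).
\]
Since $T_{n,2l}$ is identified via the chosen splitting with a subgroup of the kernel of $\theta$, the torsion part, we have $T_{n,2l}\cong \ker(\theta|_{T'_{n,2l}})$ modulo the image of $L_{2n+2l+1}(\Z)=0$, because $2n+2l+1$ is odd.

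\textbf{Case $n+l$ even.} Here $2n+2l\equiv 0\pmod 4$, so $L_{2n+2l}(\Z)=\Z$ is torsion-free. Hence $\theta$ kills all torsion in $\sN^{\DIFF}_\partial$, and the torsion of the structure set maps isomorphically onto $T'_{n,2l}$. We will use the choices of Remark~4.13 of part~I together with Lemma~\ref{lema:coker-J-CP-values}. These fix the basis of $T_{n,2l}$ as the $\eta$-preimage of the chosen basis of $T'_{n,2l}$, and with this normalisation $R_{n,l}=\mathrm{id}$.

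\textbf{Case $n+l$ odd.} Now $L_{2n+2l}(\Z)=\Z_2$, detected by the Arf invariant, and $T_{n,2l}=\ker\bigl(\theta|_{T'_{n,2l}}\bigr)$ up to the torsion-free correction coming from $\Z^{t'}$. The steps are as follows.

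\begin{enumerate}
\item Compute $\theta$ on the generators of $T'_{n,2l}$. Write the normal invariant set as $[\cpdt/\partial,G/O]$. By the Sullivan-type formula, the Arf invariant of an element is a characteristic-class evaluation: Kervaire classes paired with the fundamental class. For torsion elements coming from $\mathrm{coker}\,J$ through the Atiyah--Hirzebruch spectral sequence, it reduces to Arf invariants of stable stems and Toda brackets. In the relevant range these are the classes $\eta^2$, $\nu^2$, $\eta\sigma$ and so on, whose Kervaire invariant is known, for instance $\theta_j$ in stems $2,6,14$. This step uses Section~\ref{sec:surgery-obstruction}.
\item Where $\theta$ is nonzero on some generator $x_i$, replace generators by sums $x_i+x_j$, or by $2x_i$ inside a $\Z_4$ summand, to obtain a basis of the kernel.
\item Express this basis in terms of the original basis of $T'_{n,2l}$. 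The coefficients give the columns of $R_{n,l}$.
\end{enumerate}

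This is also where $P_{n,l}$ and $Q_{n,l}$ enter. If the Arf invariant of the free generator can be cancelled by a torsion one, then $Q$ and $R$ are coupled, so the choice made for Theorems~\ref{thm:main-theorem-3} and~\ref{thm:main-theorem-5} must be used consistently here.

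\textbf{Main obstacle.} The hard step is step~1: determining the surgery obstruction on torsion normal invariants whose components lie in stems with nontrivial Toda brackets. This requires knowing the Kervaire invariant of composites such as $\eta\circ x$ over the cells of $\cp^n$, and the attaching maps $\eta$ on $\cp^2$. My first attempt is to reduce to the top cell via the cofibration $\cpd^{\,n-1}\to\cpd^{\,n}\to S^{2n+2l}$, combined with naturality of the Arf invariant under restriction to submanifolds $\cp^{j}\times D^{2l}$. For cases not settled by this reduction, I will compute the Toda brackets $\langle \eta, x, \ldots\rangle$ through the cell structure of $\cp^n$.
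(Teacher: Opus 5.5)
Your reduction agrees with the paper's. $R_{n,l}$ is the matrix of the inclusion of the torsion subgroup $\ker(\sigma_{n,2l}|_{T'_{n,2l}})$ of $\ker\sigma_{n,2l}\cong\sS_{\partial}^{\DIFF}(\cpdt)$ into $T'_{n,2l}$. For $n+l$ even the identity follows because $\sigma_{n,2l}$ is a homomorphism into the torsion-free group $L_{2(n+l)}(\Z)\cong\Z$. For odd $n+l$ the identification is exact too: there is no ``torsion-free correction''. Free kernel generators such as $\xi_1+\eta^2$ only enter $P_{n,l}$ and $Q_{n,l}$, so $Q$ and $R$ are not coupled. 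The gap is that for odd $n+l$ the whole content of the theorem is the list of values of $\sigma_{n,2l}$ on the torsion generators. You neither state these values nor outline a method that would compute them correctly, and your steps 2 and 3 are only bookkeeping on top of them.

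Three ingredients are missing from your step~1.

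\textbf{The Wu class.} The obstruction is $\langle V^2_{\cp^n\times D^{2l}}\cup g^*\widetilde K,[\cp^n\times D^{2l},\cp^n\times S^{2l-1}]\rangle$, and the factor $V^2$ is essential. Take $\underline{\nu^2}_2$ at $(l,n)=(1,4)$, $\underline{\nu^2}_4$ at $(2,5)$ and $\underline{\sigma^2}_2$ at $(3,6)$. In each case $g^*\widetilde K$ sits below the top cell, in $x^2\times y$, $x\times y^2$ and $x^4\times y^3$ respectively. The obstruction $1$ comes only from the $x^2$, resp.\ $x^4$, term of $V^2$, so reducing to the top cell $S^{2n+2l}$ misses all three.

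\textbf{Naturality of $g^*\widetilde K$, not of the Arf invariant.} The Arf invariants of the restrictions to $\cp^j\times D^{2l}$ are independent splitting invariants, so they do not restrict naturally; the class $g^*\widetilde K$ does. This naturality gives the low-degree part of $g^*\widetilde K$ for the extension generators from Kervaire invariants of stems on lower cells. Here $\eta^2,\nu^2,\sigma^2$ have Kervaire invariant one, $\kappa$ has zero, the sphere summands $\pi_2(G/O)$, $\pi_6(G/O)$ contribute $1$, and $\eta\sigma$ is irrelevant. No Toda brackets enter.

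\textbf{The choice of extension.} The $\widetilde k_{14}$-component of $g^*\widetilde K$ for $\underline{\nu^2}_4$ at $(2,5)$ and $(1,6)$ is not determined by anything you can compute. The extension is only defined modulo $\sigma^2\circ\Sigma^{2l}j_{n-1}$, and changing it by this element flips its obstruction. The columns $(1,1,0,0)^T$ of $R_{5,2}$ and $(0,1,0,0)^T$ of $R_{6,1}$ presuppose the choice with vanishing $\widetilde k_{14}$-term.

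These facts are exactly the contents of Lemmas~\ref{obstrtors} and~\ref{lema:Arf-inv-on-sphere-part}. Given them, the paper reads off the kernels as you intend in steps 2 and 3.
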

The following corollary is an improvement of \cite[Corollary 2.7]{kaluzny2026highersmoothsurgerystructure}.
\begin{cor}
\label{cor:splitting-invariants}
    Let $(f,\partial f):(X,\partial X)\to(\cp^6\times D^{2l},\cp^6\times S^{2l-1})$ represent an element of $\sS^{TOP}(\cp^6\times D^{2l})$, $1\leq l\leq3$. The map $(f,\partial f)$ admits a smooth structure on $X$ if and only if its splitting invariants $(\overline{\sigma}_{0,2l},\dots,\overline{\sigma}_{5,2l})$ satisfy the congruences given in Table~\ref{tab:splitting-invariants}.
\end{cor}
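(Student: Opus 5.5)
The corollary is a consequence of the commutative diagram \eqref{eqn:diagram-forgetful-map} together with the explicit matrix $F_{6,l}=F_{6,l}'\cdot E_{6,l}$ from \eqref{eqn:matrix-forgetful-map}. An element of $\sS^{\TOP}_\partial(\cp^6\times D^{2l})$ admits a smooth structure exactly when it lies in the image of the forgetful map $F_{\cp^6\times D^{2l}}$. Since the right-hand horizontal isomorphism is given by the splitting invariants $(\overline{\sigma}_{0,2l},\dots,\overline{\sigma}_{5,2l})$, the task reduces to computing the image of
\[
F_{6,l}=\begin{pmatrix} A_{6,l} & 0\\ B_{6,l} & C_{6,l}\end{pmatrix}\colon \Z^{t_{6,2l}}\oplus T_{6,2l}\longrightarrow \Z^{t_{6,2l}}\oplus\Z_2^{6-t_{6,2l}}
\]
for $l=1,2,3$. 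This image is then translated into congruences on the $\overline{\sigma}_{i,2l}$.

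First, I will assemble the blocks for $n=6$. The block $A_{6,l}=A_{6,l}'P_{6,l}$ comes from \cite[Thm. 2.3, 2.4]{kaluzny2026highersmoothsurgerystructure}, where $P_{6,l}$ is the identity for odd $6+l$, i.e. $l=1,3$, by Theorem~\ref{thm:main-theorem-3}. The remaining blocks are
\[
B_{6,l}=B_{6,l}'P_{6,l}+C_{6,l}'Q_{6,l} \qquad\text{and}\qquad C_{6,l}=C_{6,l}'R_{6,l}.
\]
Here $B'$ comes from Theorem~\ref{thm:main-theorem-2}; for $l=2,3$ it is zero. The matrix $Q_{6,l}$ comes from Theorem~\ref{thm:main-theorem-5}, and is nonzero only for $l=1$ since $n=6$ is even. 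Finally, $C'$ and $R$ come from Theorems~\ref{thm:main-theorem-2} and~\ref{thm:main-theorem-4}, with $R$ the identity for $l=2$. The torsion source $T_{6,2l}$ is read off Table~\ref{tabletorsCP}: it is $\Z_2^3$, $\Z_4^2\oplus\Z_3$ and $\Z_4\oplus\Z_2^2$ respectively. The $\Z_3$ summand maps trivially to a $2$-group.

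Next, I will compute the image. The free part $A_{6,l}$ is an integer matrix, typically triangular with diagonal entries given by products of small integers such as $8$ or $24$-related factors. Its image imposes divisibility congruences on the integral splitting invariants $\overline{\sigma}_{i,2l}$, those with $i+l$ even. The $\Z_2$-valued invariants are then constrained as follows. A vector $(x,y)$ lies in the image iff $x=A_{6,l}a$ for some $a$ and $y-B_{6,l}a\in\operatorname{im}C_{6,l}$. Since $A_{6,l}$ is injective, $a$ is uniquely determined by $x$ after dividing out. So I will solve $a=A_{6,l}^{-1}x$ explicitly, reduce $B_{6,l}a$ mod $2$, and express the condition $y\equiv B_{6,l}a \pmod{\operatorname{im}C_{6,l}}$ as linear congruences. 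These congruences relate the $\Z_2$ splitting invariants to quotients of the integral ones. I will do this case by case for $l=1,2,3$ and record the result in Table~\ref{tab:splitting-invariants}.

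The main obstacle is the case $l=1$. There $Q_{6,1}\neq0$ and $B_{6,1}'\neq0$, so the mixed block $B_{6,1}$ genuinely couples free and torsion parts. This requires careful tracking of the chosen bases for $T_{6,2}$ and of the normalization of splitting invariants, so that $A_{6,1}^{-1}x$ is computed correctly modulo $2$. For this case I will first check that $\operatorname{im}C_{6,1}$ is a coordinate subspace, so that the quotient condition becomes a clean list of congruences.
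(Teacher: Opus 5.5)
Your proposal is correct and is essentially the paper's argument: multiply out $F_{6,l}=F_{6,l}'\cdot E_{6,l}$ using Part I and Theorems~\ref{thm:main-theorem-2}--\ref{thm:main-theorem-4}, then solve the resulting linear system to express the structure-set coordinates through the splitting invariants. The paper carries this out for $l=1$ and keeps the row $\overline{\sigma}_{6,2}\equiv 0$, which every topological structure satisfies automatically.

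One correction to your plan: $\im C_{6,1}$ is not a coordinate subspace. In the coordinates $(\overline{\sigma}_{0,2},\overline{\sigma}_{2,2},\overline{\sigma}_{4,2},\overline{\sigma}_{6,2})$ it is spanned by $e_1$ and $e_2+e_3$. This is exactly what produces $\overline{\sigma}_{2,2}+\overline{\sigma}_{4,2}\equiv 0 \bmod 2$, and it also absorbs the coupling term $B_{6,1}a=(a_1,0,0,0)$, so no mixed congruence appears. You should therefore read the congruences off the annihilator of $\im C_{6,l}$ rather than expect a coordinate subspace.
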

\begin{remark}[Smooth Mapping Class Groups]
\

    Here, we present an application of our results communicated to us by Oscar Randal-Williams, for which we are thankful (see also \cite{randal-williams}). 

    Let $X$ be a closed smooth manifold with $\pi_1(X)=0$. We denote by $G(X)$ the space of self-homotopy equivalences of $X$, and by $\CAT(X)$ the space of $\CAT$-automorphisms of $X$; see \cite[Appendix 1.2.a]{burghelea2006}. In particular,
    \begin{equation}
    \label{eq:block-CAT-mapping-class-group}
        \pi_0\CAT(X)\cong\MCG^{\CAT}(X),
    \end{equation}
    the $\CAT$-mapping class group of $X$. Further, we denote by $\widetilde{G}(X)$ and $\widetilde{\CAT}(X)$ the spaces of block self-homotopy equivalences and block $\CAT$-automorphisms of $X$; see \cite[1.1.4.]{weiss-williams-automorph} for a definition. There are homotopy fiber sequences 
    \begin{equation}
    \label{eq:LES-block-automorph-spaces}
        \widetilde{\CAT}(X)\to\widetilde{G}(X)\to\widetilde{\sS}^{\CAT}(X),
    \end{equation}
    where $\widetilde{\sS}^{\CAT}(X)$ is the block $\CAT$-structure space of $X$; see again \cite[1.1.4.]{weiss-williams-automorph}. The homotopy groups of $\widetilde{\sS}^{\CAT}(X)$ satisfy
    \[
        \pi_k\widetilde{\sS}^{\CAT}(X)\cong\sS_{\partial}^{\CAT}(X\times\text{D}^k);
    \]
    see \cite[Ch.17.A]{Wall(1999)}. There is an obvious forgetful map $\widetilde{\DIFF}(X)\to\widetilde{\TOP}(X)$ which relates the fiber sequences~\eqref{eq:LES-block-automorph-spaces}, and applying $\pi_*(\blank)$ we obtain for $X=\cp^n$ the following diagram, where the columns and rows are exact:
    \begin{equation*}
        \begin{tikzcd}
             & \pi_1\widetilde{G}(\cp^n) \arrow{r}{} \arrow{d}{} & \pi_1\widetilde{G}(\cp^n) \arrow{d}{} \\
            {[Th(\cp^n\times\text{D}^1),\TOP/\Or]} \arrow{r}{} \arrow{d}{} & \sS_{\partial}^{\DIFF}(\cp^n\times\text{D}^1) \arrow{r}{F_{\cp^n\times\text{D}^1}} \arrow{d}{} & \sS_{\partial}^{\TOP}(\cp^n\times\text{D}^1) \arrow{d}{} \\
            \pi_1(\widetilde{\DIFF}(\cp^n)/ \widetilde{\TOP}(\cp^n)) \arrow{r}{} &  \pi_0\widetilde{\DIFF}(\cp^n) \arrow{r}{} \arrow{d}{} & \pi_0\widetilde{\TOP}(\cp^n) \arrow{d}{} \\ 
             & \pi_0\widetilde{G}(\cp^n) \arrow{r}{} & \pi_0\widetilde{G}(\cp^n)
        \end{tikzcd}
    \end{equation*}
    We know from~\eqref{eqn:top-str-set-of-cp-n-times-d-k} that $\sS_{\partial}^{\TOP}(\cp^n\times\text{D}^1)\cong0$. This implies that the middle map in the right column is injective. Next, from \cite[1.1.4.]{weiss-williams-automorph} we know that the inclusion $G(\cp^n)\to\widetilde{G}(\cp^n)$ is a homotopy equivalence. Here, $\pi_0G(\cp^n)\cong\Z_2$ is generated by complex conjugation $c:\cp^n\xrightarrow{\cong}\cp^n$; see \cite[Ex.~18.16]{Lueck-Macko(2024)}, and $\pi_1G(\cp^n)\cong\Z_{n+1}$ where the generator comes from an action of the projective unitary group $U(n+1)/\Delta_{n+1}$; see \cite[Thm.~1.1, Prop.~1.2]{sasao-homotopy-of-map-cp}. In particular, the map 
    \[
        \pi_1\widetilde{\DIFF}(\cp^n)\to\pi_1G(\cp^n)\cong\Z_{n+1}
    \]
    is surjective. Finally, since complex conjugation is smooth, the map 
    \[
        \pi_0\widetilde{\TOP}\to\pi_0\widetilde{G}(\cp^n)\cong\Z_2\{c\}
    \]
    is also surjective, so it is an isomorphism. We obtain the following diagram:
    \begin{equation*}
        \begin{tikzcd}
             & \Z_{n+1} \arrow[r,phantom,"\cong"] \arrow{d}{0} & \Z_{n+1} \arrow{d}{} \\
            {[Th(\cp^n\times\text{D}^1),\TOP/\Or]} \arrow{r}{} \arrow[d,phantom,sloped,"\cong"] & \sS_{\partial}^{\DIFF}(\cp^n\times\text{D}^1) \arrow{r}{} \arrow[>->]{d} & 0 \arrow{d}{} \\
            \pi_1(\widetilde{\DIFF}(\cp^n)/ \widetilde{\TOP}(\cp^n)) \arrow{r}{} &  \pi_0\widetilde{\DIFF}(\cp^n) \arrow{r}{} \arrow[->>]{d} & \Z_2\{c\} \arrow[d,phantom,sloped,"\cong"] \\ 
             & \Z_2\{c\} \arrow[r,phantom,"\cong"] & \Z_2\{c\}
        \end{tikzcd}
    \end{equation*}
    In the simplicial model of the spaces $\DIFF(X)$ and $\widetilde{\DIFF}(X)$ from \cite[Appendix 1.2.a,1.3.a]{burghelea2006}, the $1$-simplices are given by the relation of isotopy and pseudo-isotopy. We know from \cite[Intro, Corollary 1]{cerf1970} that if $\dim X\geq5$ then the resulting relations on the $0$-simplices coincide. Thus, 
    \[
        \pi_0\widetilde{\DIFF}(\cp^n)\cong\pi_0\DIFF(\cp^n)
    \]
    for $n\geq3$. From \eqref{eq:block-CAT-mapping-class-group} and the middle column of the diagram we obtain the following short exact sequence: 
    \begin{equation*}
        0\to\sS_{\partial}^{\DIFF}(\cp^n\times\text{D}^1)\to\MCG^{\DIFF}(\cp^n)\to\Z_2\{c\}\to0
    \end{equation*}
    This sequence splits since $c$ is a diffeomorphism. From \cite[IV.2.1]{brown-group-cohomology} we obtain 
    \begin{equation}
    \label{eq:mapping-class-group-cp}
        \MCG^{\DIFF}(\cp^n)\cong\sS_{\partial}^{\DIFF}(\cp^n\times\text{D}^1)\rtimes\Z_2\{c\}
    \end{equation}
    where the semi-direct product is determined by the conjugation action by $c$ on the image of $\sS_{\partial}^{\DIFF}(\cp^n\times\text{D}^1)$ in $\MCG^{\DIFF}(\cp^n)$. Direct verification shows that the conjugation action by $c$ and the action by post-composition with $c$ agree on $\sS_{\partial}^{\DIFF}(\cp^n\times\text{D}^1)$.

    Since by \cite[Sec.17.7, p.798-799]{Lueck-Macko(2024)} the action by post-composition with $c$ on elements coming from $L_{2n+2}(\Z)$ is $(-1)^n\id$, we obtain from Theorem~\ref{thm:main-theorem-1} and \eqref{eq:mapping-class-group-cp}:
    \begin{align*}
        &\MCG^{\DIFF}(\cp^3)\cong\text{D}_8,\quad\MCG^{\DIFF}(\cp^5)\cong\Z_2\{\Sigma^{11}\}\oplus\Z_2\{c\}, \\
        &\MCG^{\DIFF}(\cp^4)\cong\MCG^{\DIFF}(\cp^6)\cong\Z_2\{c\}.
    \end{align*}
    Here, $\text{D}_8$ is the dihedral group on $8$ elements and $\Sigma^{11}$ is the connected sum with the generator of $bP_{12}\cong\Theta_{11}$. The subgroup represented by orientation-preserving diffeomorphisms for $n=3$ was already obtained by Brumfiel in \cite[Remark II.11]{Brumfiel1971}, and reproduced by Kreck and Su in \cite{kreck2024}. It seems that the knowledge of the full $\MCG^{\DIFF}(\cp^3)$ is folklore; see the comments to \cite{randal-williams}. 

    We could extend the results from Lemma~\ref{lema:coker-J-CP-values} and combine them with \cite[Table 5]{kaluzny2026highersmoothsurgerystructure} to obtain
    \[
        \MCG^{\DIFF}(\cp^7)\cong(\Z_8\{\Sigma^{15}\}\textcolor{red}{\oplus}\Z_2\{\eta\kappa\})\rtimes\Z_2\{c\},
    \]
    where $\eta\kappa$ comes from $\coker[\Sigma\cp^7,\Omega J]\cong\sN_{\partial}^{\DIFF}(\cp^7\times\text{D}^1)$, $\Sigma^{15}$ is the connected sum with the generator of $bP_{16}\subseteq\Theta_{15}$. The red color highlights an unsolved extension problem.
\end{remark}

\begin{remark}[Methods and Organization]
\label{rem:methods-and-organization}
\ 

    As described in \cite[Sec.3]{kaluzny2026highersmoothsurgerystructure}, determining the torsion group $T_{n,k}$ proceeds in two steps. First, calculate the normal invariants, and second, evaluate the surgery obstruction. The main tool in the first part is the spectral sequence of Brumfiel from \cite[Sec.5]{Brumfiel-(1970))} which is used to calculate the torsion part of the normal invariants. The surgery obstruction is given by the Arf invariant of manifolds with boundary.

    The spectral sequence, which calculates $[\Sigma^k\cp^n,Y]$ for any space $Y$, is explained in detail in Section \ref{sect:normal-invariants}. Brumfiel calculated parts of the sequences for $Y=G$ and $G/O$ in a certain range of dimensions. It turns out that when extending the results of Brumfiel to $k > 0$, essentially the same methods work. However, the relationship between the sequences for $G$ and $G/O$ turns out to be more complex, which we address in Subsection~\ref{subsect:results-spectral-sequence}.

    We also provide detailed arguments where Brumfiel only states the results (especially at the prime $p=2$). Moreover, we solve the extension problems (some of which were also recently solved by Kasilingam in \cite{kasilingam2026diffeomorphismclassificationsmoothstructures}). In order to solve the extension problems we need to include in Section~\ref{sect:normal-invariants} a thorough explanation of the differentials and the necessary stable homotopy theory. We also use more recent results such as the computation of James numbers from \cite{lundell}, or computations of stable self-maps of complex projective spaces from \cite{Kachi2001SomeCG}, to clarify some parts of the calculations. In Subsection~\ref{subsect:results-spectral-sequence}, we use Sullivan's splittings of $G/O$ localized at a prime to clarify the above-mentioned relationship between the sequences for $G$ and $G/O$.

    As for the surgery obstructions presented in Section \ref{sec:surgery-obstruction}, we use the formula for the Arf invariant of manifolds with boundary; see \cite[Eq.(0.6)]{Hambleton}. The formula is in terms of characteristic classes, which were extensively studied by Brumfiel, Madsen and Milgram; see \cite{brumfiel_PL}, \cite{brumfiel_transfer} and \cite{Madsen}. Their results are used, together with some of our own calculations and results from Part I \cite{kaluzny2026highersmoothsurgerystructure}, to compute the desired surgery obstructions.

    The obstacles for extending our calculations to higher $k$ and $n$ are as follows. The spectral sequence for computing the normal invariants works in principle for any $k$ and $n$. However, to obtain specific values for higher $k$ and $n$ one needs to calculate more pages of the sequence and their larger portions. This requires computations of stable homotopy groups of spheres in a wider range (which is available roughly up to dimension $100$) and stable homotopy types of stunted complex projective spaces (which seems to be less available). The surgery obstruction formula also works in principle for any $k$ and $n$, and is significantly simplified for $k>0$. However, to effectively use it, one would need to know the Kervaire invariants of the elements resulting from the spectral sequence, as well as extend calculations of Adams operations and Stiefel-Whitney classes of bundles over $\Sigma^k\cp^n$.

    Proofs of the main theorems and Corollary~\ref{cor:splitting-invariants} are contained in Section~\ref{sec:proofs}. The key technical statements from previous sections are Lemmas~\ref{lema:coker-J-CP-values}, \ref{obstrfreeodd}, \ref{obstrtors} and \ref{lema:Arf-inv-on-sphere-part}.
\end{remark}

\begin{remark}
    The table in Theorem~\ref{thm:main-theorem-1} still contains unsolved extension problems. However, these are of a different nature than those mentioned in Remark~\ref{rem:methods-and-organization}. Namely, they arise from the long exact surgery sequence for $\cp^n$; see \cite[Eq.(3.2)]{kaluzny2026highersmoothsurgerystructure}. Their solution will probably require different technology from that used in the present paper.
\end{remark}

%% file: norminv.tex
\section{The Normal Invariants}
\label{sect:normal-invariants}

Recall that in \cite[Lemma 4.1, Corollary 4.2]{kaluzny2026highersmoothsurgerystructure} we have shown that the normal invariants of $\cpd$ split as 
\begin{equation}
\label{eq:splitting-norm-inv-thom-space}
    [Th(\cpd),G/O]\cong\ker[\Sigma^k\cp^n,J]\oplus \coker[\Sigma^k\cp^n,\Omega J]\oplus\pi_k(G/O),
\end{equation}
and that $\ker[\Sigma^k\cp^n,J]\cong\Z^{t_{n,k}'}$ (where $t_{n,k}'$ is closely related to $t_{n,k}$; see \cite[Remark 4.3]{kaluzny2026highersmoothsurgerystructure}). In addition, we have obtained specific generators of this free abelian subgroup as stable (virtual) bundles from $[\Sigma^k\cp^n,BO]$. In this section, we compute the finite abelian subgroup $\coker[\Sigma^k\cp^n,\Omega J]$ for $n,k\leq6$. Our primary tool will be a spectral sequence used by Brumfiel in \cite{Brumfiel-(1970))} to compute $[\cp^n,G]$ for $n\leq6$.

Consider the cofibration sequences $$\textstyle S^{2n+1}\xrightarrow{H_n}\cp^n\xrightarrow{i_n}\cp^{n+1}\xrightarrow{j_n}S^{2n+2}\xrightarrow{\Sigma H_n}\Sigma\cp^n\xrightarrow{\Sigma i_n}\dots,$$one for each $n\in\Z$. These cofibration sequences induce long exact sequences of groups $$\textstyle \pi_{2n+1}(Y)\xleftarrow{[H_n,Y]}[\cp^n,Y]\xleftarrow{[i_n,Y]}[\cp^{n+1},Y]\xleftarrow{[j_n,Y]}\pi_{2n+2}(Y)\xleftarrow{[\Sigma H_n,Y]}\dots$$for any space $Y$, which assemble into the following exact couple:
\begin{equation}
    \label{exactcouple}
    \begin{tikzcd}
    \displaystyle\prod_{n,k\geq0}[\textstyle\Sigma^k\cp^n,Y]
        \arrow{dr}[swap]{\prod_{n,k\geq0}[\Sigma^{k} H_{n},Y]} 
    & & 
    \displaystyle\prod_{n,k\geq0}[\textstyle\Sigma^k\cp^{n+1},Y] 
         \arrow{ll}[swap]{\prod_{n,k\geq0}[\Sigma^{k} i_{n},Y]}\\
    & 
    \displaystyle\prod_{n,k\geq0}\pi_{2n+k+1}(Y) 
         \arrow{ur}[swap]{\prod_{n,k\geq0}[\Sigma^{k} j_{n},Y]}
    &
\end{tikzcd}
\end{equation}
There is a spectral sequence associated to this exact couple in the standard way; see \cite[Subsec.2.2]{mccleary2001user} (especially the part titled \textit{Exact couples}). The differentials on the first page 
\begin{equation}
\label{eq:differentials-1st-page}
    d_1^{k,n}(Y):\pi_{2n+k+1}(Y)\cong E_1^{k,n}(Y)\to E_1^{k-1,n+1}(Y)\cong\pi_{2n+k+2}(Y)    
\end{equation}
are given by the composition 
\begin{equation}
\label{eq:1st-differential-composition}
    \pi_{2n+k+1}(Y)\xrightarrow{[\Sigma^{k-1}j_n,Y]}[\textstyle\Sigma^{k-1}\cp^{n+1},Y]\xrightarrow{[\Sigma^{k-1}H_{n+1},Y]}\pi_{2(n+1)+(k-1)+1}(Y).    
\end{equation}
The second page consists of homology groups $E_2^{k,n}(Y)=\ker d_1^{k,n}(Y)/\im d_1^{k+1,n-1}(Y)$. We obtain a new exact couple, the derived couple; see \cite[Prop.2.7]{mccleary2001user}. Iterating this process, we get pages $E_r^{k,n}(Y)$ for each $r\in\N$ that are quotients of subgroups of $\pi_{2n+k+1}(Y)$, and differentials on the $r$-th page $d_r^{k,n}(Y):E_r^{k,n}(Y)\to E_r^{k-1,n+r}(Y)$.

In this case, we are not interested in the $E_{\infty}$-page. Our goal is to calculate the groups $[\Sigma^{k}\cp^n,Y]$ that appear in the original exact couple \eqref{exactcouple}. These fit into the following short exact sequences: 
\begin{align}
\label{eq:exact-sequence-from-couple}
    \textstyle0\to\ker[\Sigma^{k}i_n,Y] \xrightarrow{[\Sigma^{k}j_n,Y]}[\Sigma^{k}\cp^{n+1},Y]\xrightarrow{[\Sigma^{k}i_n,Y]}[\Sigma^{k}\cp^n,Y] \\ \nonumber
    \xrightarrow{[\Sigma^{k}H_n,Y]}\im[\Sigma^{k}H_n,Y]\to0.
\end{align}

We seek to express the outer groups in the sequence in terms of differentials $d_r^{k,n}(Y)$. Define $D_i^{k,n}(Y)\subseteq\pi_{2n+k}(Y)$ for $i=1$ as 
\[
    D_1^{k,n}(Y):=\im d_1^{k,n-1}(Y)\subseteq E_1^{k-1,n}(Y)\cong\pi_{2n+k}(Y),
\]
and for $i>1$ inductively as the preimage of $\im\alpha_i^{k,n}$ under the third vertical map in the middle short exact sequence of the following diagram 
\[
\begin{tikzcd}
    0 \arrow{d}{} & 0 \arrow{d}{} & \\
    D_{i-1}^{k,n}(Y) \arrow[r,equal] \arrow{d}{} & D_{i-1}^{k,n}(Y) \arrow{d}{\delta_{i-1}^{k,n}} \arrow{r}{} & 0 \arrow{d}{} \\
    D_i^{k,n}(Y) \arrow{d}{} \arrow{r}{\delta_i^{k,n}} & \ker d_1^{k-1,n}(Y) \arrow{d}{} \arrow{r}{} & \ker d_1^{k-1,n}(Y)/\im\delta_i^{k,n} \arrow{d}{\cong} \\
    \im d_i^{k,n-i}(Y) \arrow{d}{} \arrow{r}{\alpha_{i}^{k,n}} & \ker d_1^{k-1,n}(Y)/\im\delta_{i-1}^{k,n} \arrow{d}{} \arrow{r}{} & \left(\ker d_1^{k-1,n}(Y)/\im\delta_{i-1}^{k,n}\right)/\im d_i^{k,n-i}(Y) \arrow{d}{} \\
    0 & 0 & 0 
\end{tikzcd}
\] 
where $\delta_{i-1}^{k,n}$ exists by induction and $\alpha_i^{k,n}$ is the composition 
\begin{align*}
    \im d_i^{k,n-i}(Y)&\hookrightarrow E_{i}^{k-1,n}(Y)\cong\ker d_{i-1}^{k-1,n}(Y)/\im d_{i-1}^{k,n-(i-1)}(Y) \\ 
    &\hookrightarrow E_{i-1}^{k-1,n}(Y)/\im d_{i-1}^{k,n-(i-1)}(Y) \\
    &\hookrightarrow (E_{i-2}^{k-1,n}(Y)/\im d_{i-2}^{k,n-(i-2)}(Y))/\im d_{i-1}^{k,n-(i-1)}(Y) \\
    &\vdots \\
    &\hookrightarrow(\ker d_1^{k-1,n}(Y)/\im d_1^{k,n-1}(Y))/\im d_{2}^{k,n-2}(Y)/\dots/\im d_{i-1}^{k,n-(i-1)}(Y) \\
    &\cong\ker d_1^{k-1,n}(Y)/\im\delta_{i-1}^{k,n}
\end{align*}
The last isomorphism follows (inductively) from the right column of the diagram. Since $\im d_i^{k,n-i}(Y)=0$ for $i\geq n$, the induction stabilizes and we denote 
\[
    D^{k,n}(Y):=D_n^{k,n}(Y).
\]
\begin{lema}
\label{sesspec}
    The short exact sequence \eqref{eq:exact-sequence-from-couple} is isomorphic to
    \begin{align*}
        \textstyle0\to\pi_{2n+k+2}(Y)/D^{k+2,n}(Y)\xrightarrow{[\Sigma^{k}j_n,Y]}[\Sigma^{k}\cp^{n+1},Y]\xrightarrow{[\Sigma^{k}i_n,Y]}[\Sigma^{k}\cp^n,Y] \\ 
        \xrightarrow{[\Sigma^{k}H_n,Y]}D^{k+1,n}(Y)\to0
    \end{align*}
\end{lema}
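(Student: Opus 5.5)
The plan is to reduce the lemma to a single claim and then prove that claim by filtering $[\Sigma^k\cp^n,Y]$ by cellular skeleta. The claim is that, as subgroups of $\pi_{2n+k+1}(Y)$,
\[
\im[\Sigma^kH_n,Y]=D^{k+1,n}(Y)\qquad\text{for all } k,n .
\]

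\textbf{Reduction.} Assume the claim. By exactness of the long exact sequence of the cofibration $\Sigma^kH_n$, the kernel of $[\Sigma^ki_n,Y]$ is the image of $[\Sigma^kj_n,Y]$. That image is $\pi_{2n+k+2}(Y)/\ker[\Sigma^kj_n,Y]$. Exactness one step further gives $\ker[\Sigma^kj_n,Y]=\im[\Sigma^{k+1}H_n,Y]$, and the claim with $k$ replaced by $k+1$ identifies this with $D^{k+2,n}(Y)$. The right-hand end of \eqref{eq:exact-sequence-from-couple} is the claim itself. So the lemma follows once the claim is proved.

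\textbf{Proof of the claim.} Filter $[\Sigma^k\cp^n,Y]$ by
\[
F_i:=\ker\bigl([\Sigma^k\cp^n,Y]\to[\Sigma^k\cp^{n-i},Y]\bigr),
\]
using the composite of the restrictions $[\Sigma^ki_m,Y]$. Then $F_0=0$. The filtration is exhaustive: since $\cp^0$ is a point, $F_n$ is the whole group.

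I will prove by induction on $i$ that $[\Sigma^kH_n,Y](F_i)=D_i^{k+1,n}(Y)$.

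\emph{Base case $i=1$.} By exactness, $F_1=\im[\Sigma^kj_{n-1},Y]$. Hence $H_n(F_1)$ is the image of the composite \eqref{eq:1st-differential-composition}, which is $\im d_1^{k+1,n-1}(Y)=D_1^{k+1,n}(Y)$.

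\emph{Inductive step.} Take $x\in F_i$. Its restriction to $\Sigma^k\cp^{n-i+1}$ lies in the analogous first filtration stage there, so it equals $j_{n-i}(a)$ for some $a\in\pi_{2(n-i)+k+2}(Y)$. Thus $x$ is a lift of $j_{n-i}(a)$ along the $(i-1)$-fold composite of restrictions, and $H_n(x)$ is exactly what the derived-couple description of $d_i$ produces from $a$. The standard formula for $d_r$ in an exact couple (\cite[Prop.~2.7]{mccleary2001user}) is $d_r=H\circ(i^{r-1})^{-1}\circ j$. With it, I will check two things.
\begin{itemize}
\item $a$ survives to $E_i$, and the classes of $H_n(x)$ modulo $H_n(F_{i-1})=D_{i-1}^{k+1,n}(Y)$ fill out precisely $\alpha_i(\im d_i^{k+1,n-i}(Y))$. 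The ambiguity in the lift $x$ is exactly $F_{i-1}$, and the ambiguity in $a$ is killed by the earlier images.
\item $H_n(F_i)\subseteq\ker d_1^{k,n}(Y)$, because $H_{n+1}\circ\Sigma H_n$-type composites vanish ($j_n\circ H_n$-consecutive maps in the cofibre sequence are null).
\end{itemize}
Together these say that $H_n(F_i)$ is the preimage of $\im\alpha_i$ in $\ker d_1$ modulo $D_{i-1}$, which is the definition of $D_i^{k+1,n}(Y)$. At $i=n$ this gives $\im H_n=H_n(F_n)=D^{k+1,n}(Y)$.

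\textbf{Main obstacle.} The hard step is the inductive bookkeeping. The successive subquotients in the definition of $\alpha_i$ must match the indeterminacies of the lifts exactly, neither larger nor smaller. Concretely:
\begin{itemize}
\item different choices of $a$ with the same class in $E_i$ must change $H_n(x)$ only by elements of $D_{i-1}$;
\item every element of $\im d_i$ must be realized by some $x\in F_i$.
\end{itemize}
I would handle this by writing out the derived couple explicitly: its first term is the image of the restriction maps, and its $E$-term is $i$-fold $\ker/\im$. I would then verify the two inclusions $H_n(F_i)\subseteq D_i$ and $D_i\subseteq H_n(F_i)$ separately, by chasing the diagram defining $D_i^{k,n}$ together with the long exact sequences.
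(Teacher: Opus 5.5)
Your proposal is correct and is essentially the paper's proof. It uses the same reduction by exactness to $\im[\Sigma^kH_n,Y]=D^{k+1,n}(Y)$, and the same filtration: your $F_i$ is exactly the paper's $\ker[\Sigma^{*}i_{*},Y]^i$, whose images under $[\Sigma^kH_n,Y]$ the paper identifies successively with $\im d_1^{k+1,n-1}(Y),\im d_2^{k+1,n-2}(Y),\dots$ via the derived-couple description of $d_r$. Your explicit induction and the exhaustiveness remark ($\cp^0$ is a point) just spell out the paper's ``similarly for higher pages'' step. One small correction: the vanishing you need in your second bullet is that of the consecutive composite $\Sigma H_n\circ j_n$, i.e.\ $\im[\Sigma^kH_n,Y]=\ker[\Sigma^{k-1}j_n,Y]$, not the composites as you wrote them.
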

\begin{proof}
    We need to prove that 
    \[
        \ker[\Sigma^{k}i_n,Y]\cong\pi_{2n+k+2}(Y)/D^{k+2,n}(Y)\quad\text{and}\quad \im[\Sigma^{k}H_n,Y]\cong D^{k+1,n}(Y).
    \]
    From \eqref{exactcouple} it follows that 
    \begin{align*}
        \ker[\Sigma^{k}i_n,Y]&\cong\im[\Sigma^{k}j_n,Y]\cong \pi_{2n+k+2}(Y)/\ker[\Sigma^{k}j_n,Y] \\ 
        &\cong\pi_{2n+k+2}(Y)/\im[\Sigma^{k+1}H_n,Y].
    \end{align*}
    Thus, it is sufficient to prove that $\im[\Sigma^kH_n,Y]\cong D^{k+1,n}(Y)$ for $k\in\N_0$.

    Now, we take a closer look at the term $D^{k+1,n}(Y)$.The differentials on the first page are defined in \eqref{eq:1st-differential-composition} as $d_1^{k+1,n-1}(Y)=[\Sigma^{k}H_{n},Y]\circ[\Sigma^{k}j_{n-1},Y].$
    Since $\im[\Sigma^{k}j_{n-1},Y]\cong\ker[\Sigma^{k}i_{n-1},Y]$, we get 
    \[
        \im d_1^{k+1,n-1}(Y)\cong[\Sigma^{k}H_{n},Y](\ker[\Sigma^{k}i_{n-1},Y]).
    \]
    On the second page, the differentials are defined as $d_2^{k+1,n-2}(Y)=[\Sigma^kH_n,Y]\circ[\Sigma^kj_{n-2},Y]$ where (by abuse of notation)
    \begin{align*}
        [\Sigma^kj_{n-2},Y]:E_2^{k+1,n-2}(Y)&\to[\Sigma^ki_{n-1},Y](\textstyle[\Sigma^k\cp^n,Y])\subseteq[\Sigma^k\cp^{n-1},Y]\\
        [x]&\mapsto[\Sigma^kj_{n-2},Y](x)  
    \end{align*}
    and 
    \begin{align*}
        [\Sigma^kH_n,Y]:[\Sigma^ki_{n-1},Y](\textstyle[\Sigma^k\cp^n,Y])&\to E_2^{k,n}(Y) \\
        [\Sigma^ki_{n-1},Y](y)&\mapsto[[\Sigma^kH_n,Y](y)]
    \end{align*}
    So $d_2^{k+1,n-2}(Y)([x])$ is the class in $E_2^{k,n}(Y)=\ker d_1^{k,n}(Y)/\im d_1^{k+1,n-1}(Y)$ represented by 
    \begin{align}
    \label{eq:representative-of-2nd-differential}
        [\Sigma^kH_n,Y]\{[\Sigma^ki_{n-1},Y]^{-1}([\Sigma^kj_{n-2},Y](x))\}
    \end{align}
    Again, using the exactness of \eqref{exactcouple} this can be rewritten as 
    \[
        \im d_2^{k+1,n-2}(Y)=[\Sigma^kH_n,Y]\{[\Sigma^ki_{n-1},Y]^{-1}(\ker[\Sigma^ki_{n-2},Y])\}\subseteq E_2^{k,n}(Y)
    \]
    Now $[\Sigma^ki_{n-1},Y]^{-1}\ker[\Sigma^ki_{n-2},Y]=\ker([\Sigma^ki_{n-2},Y]\circ[\Sigma^ki_{n-1},Y])$ (from now on, we denote the second term $\ker[\Sigma^{*}i_{*},Y]^2$) and we obtain 
    \[
        \im d_2^{k+1,n-2}(Y)=[[\Sigma^kH_n,Y](\ker[\Sigma^{*}i_{*},Y]^2)]. 
    \]
    This class is determined up to $\im d_1^{k+1,n-1}(Y)=[\Sigma^{k}H_{n},Y](\ker[\Sigma^{k}i_{n-1},Y])$, therefore
    \[
        \im d_2^{k+1,n-2}(Y)\cong\frac{[\Sigma^kH_n,Y](\ker[\Sigma^{*}i_{*},Y]^2)}{[\Sigma^{k}H_{n},Y](\ker[\Sigma^{k}i_{n-1},Y])}.
    \]
    Similarly, for differentials on higher pages we get 
    \[
        \im d_r^{k+1,n-r}(Y)\cong\frac{[\Sigma^{k}H_{n},Y](\ker[\Sigma^{*}i_{*},Y]^r)}{[\Sigma^{k}H_{n},Y](\ker[\Sigma^{*}i_{*},Y]^{r-1})}. 
    \]
    Since $[\Sigma^k\cp^n,Y]$ (the source of $[\Sigma^kH_n,Y]$) belongs to $\ker[\Sigma^{*}i_{*},Y]^N$ for $N>n$, we see that $D^{k+1,n}(Y)$ gives a filtration of $\im[\Sigma^kH_n,Y]$.
\end{proof}
In order to use Lemma \ref{sesspec} to compute $[\Sigma^{2k}\cp^n,Y]$, we need to determine the differentials. By \eqref{eq:1st-differential-composition}, 
\begin{align*}
    d_1^{k,n}(Y):\pi_{2n+k+1}(Y)&\to\pi_{2n+k+2}(Y) \\
    [\alpha]&\mapsto[\alpha_1]
\end{align*}
where $\alpha_1$ is given by the diagram
\[
\begin{tikzcd}
    S^{2n+k+2} \arrow[dotted]{r}{\alpha_1} \arrow{d}{\Sigma^{k-1}H_{n+1}} & Y \\
    \Sigma^{k-1}\cp^{n+1} \arrow{r}{\Sigma^{k-1}j_{n}} & S^{2n+k+1} \arrow{u}{\alpha}
\end{tikzcd}
\]

Next, by \eqref{eq:representative-of-2nd-differential} 
\[
    \begin{tikzcd}
        \ker d_1^{k,n}(Y)/\im d_1^{k+1,n-1}(Y) & \ker d_1^{k-1,n+2}(Y)/\im d_1^{k,n+1}(Y) \\
        \mathllap{d_2^{k,n}(Y){:}\ } E_2^{k,n}(Y) \arrow{r}{} \arrow[u,phantom,sloped,"\cong"] & E_2^{k-1,n+2}(Y) \arrow[u,phantom,sloped,"\cong"] \\
        {[\alpha]} \arrow[r,maps to] & {[\alpha_2]}
    \end{tikzcd}  
\]
where $\alpha_2$ is given by the following diagram
\[
\begin{tikzcd}
    |[minimum width=2cm]| S^{2n+k+4} \arrow{r}{\Sigma^{k-1}H_{n+2}} \arrow[bend left, dotted]{rr}{\alpha_2} & |[minimum width=2.5cm]| \Sigma^{k-1}\cp^{n+2} \arrow{r}{\overline{\alpha}} & Y \\
    &\Sigma^{k-1}\cp^{n+1} \arrow{u}{\Sigma^{k-1}i_{n+1}} \arrow{r}{\Sigma^{k-1}j_{n}} & S^{2n+k+1} \arrow{u}{\alpha}
\end{tikzcd}.
\]
Here $\overline{\alpha}$ is an extension of $\alpha\circ\Sigma^{k-1}j_{n}$, which exists since 
\begin{align*}
    [\alpha]\in\ker d_1^{k,n} &\Rightarrow ([\Sigma^{k-1}H_{n+1},Y]\circ[\Sigma^{k-1}j_{n},Y])[\alpha]=0 \\
    & \Rightarrow \Sigma^{k-1}H_{n+1}\circ\Sigma^{k-1}j_{n}\circ\alpha\text{ is nullhomotopic} \\
    & \Rightarrow \Sigma^{k-1}j_{n}\circ\alpha\text{ is nullhomotopic on the attaching map} \\
    & \text{ of the top cell of }\Sigma^{k-1}\cp^{n+2}.
\end{align*}
Choosing a different extension $\overline{\alpha}$ changes $\alpha_2$ only within $\im d_1^{k,n+1}(Y)$, so we obtain a well-defined class $[\alpha_2]\in E_2^{k-1,n+2}(Y)$.  

Analogously, the differential $d_r^{k,n}(Y):E_r^{k,n}(Y)\to E_r^{k-1,n+r}(Y)$ is given by the diagram
\begin{equation}
\label{eq:diagram-r-th-differential}
    \begin{tikzcd}
        |[minimum width=2cm]| S^{2n+k+2r} \arrow{r}{\Sigma^{k-1}H_{n+r}} \arrow[bend left, dotted]{rr}{\alpha_r} & |[minimum width=2.5cm]| \Sigma^{k-1}\cp^{n+r} \arrow{r}{\overline{\alpha}} & Y \\
        &\Sigma^{k-1}\cp^{n+1} \arrow{u}{\Sigma^{k-1}i_{n+1,n+r}} \arrow{r}{\Sigma^{k-1}j_{n}} & S^{2n+k+1} \arrow{u}{\alpha}
    \end{tikzcd}
\end{equation}
where the extension $\overline{\alpha}$ exists since $[\alpha]\in\ker d_{r-1}^{k,n}(Y)$, and the different choices of this extension change $\alpha_r$ only within $\im d_{r-1}^{k,n+r-1}(Y)$. Here, $i_{m,n}$ denotes the standard embedding of $\cp^m$ in $\cp^n$, $m\leq n$.

The source of $\alpha$ is actually $\Sigma^{k-1}(\cp^{n+1}/\cp^n)$. The map $\overline{\alpha}$ thus corresponds to an extension of $\alpha$ to $\Sigma^{k-1}(\cp^{n+r}/\cp^n)$, and this correspondence induces a bijection between the set of extensions $\overline{\alpha}$ and $\map(\Sigma^{k-1}(\cp^{n+r}/\cp^n),Y)$. We obtain the following alternate description of the differentials. Given an element $[x]\in[\Sigma^{k-1}(\cp^{n+r}/\cp^{n}),Y]$, the differential $d_r^{k,n}(Y)$ maps the homotopy class of the composition 
\begin{equation}
\label{eq:differential-source-bottom-cell}
    S^{2n+k+1}\xrightarrow{\text{bottom cell}}\textstyle\Sigma^{k-1}(\cp^{n+r}/\cp^{n})\xrightarrow{x}Y
\end{equation} 
to the class represented by the composition 
\begin{equation}
\label{eq:differential-target-Hopf-map}
    S^{2n+2r+k}\xrightarrow{\Sigma^{k-1}H_{n+r}}\textstyle\Sigma^{k-1}(\cp^{n+r}/\cp^{n})\xrightarrow{x}Y. 
\end{equation}
The different choices of $x$ that map to the same element in $\pi_{2n+k+1}(Y)$ correspond to different choices of $\overline{\alpha}$ in \eqref{eq:diagram-r-th-differential} and, as already discussed, these are identified in the spectral sequence.

Now for $Y=G$ we get $\pi_{*}(G)\cong\pi_{*}^s$, as is described, for example, in Chapter 3 of \cite{Madsen}, and therefore $d_r^{k,n}(G):\pi_{2n+k+1}^s\to\pi_{2n+2r+k}^s$. These differentials are determined by classes from 
\[
    [\Sigma^{k-1}(\cp^{n+r}/\cp^{n}),G]\cong[\cp^{n+r}/\cp^{n},\Omega^{k-1}G]\cong\pi_s^{-k+1}(\cp^{n+r}/\cp^{n})
\]
as described in the previous paragraph. Thus, the map 
\[
    [\Sigma^{k-1}H_{n+r},G]:[\Sigma^{k-1}(\cp^{n+r}/\cp^{n}),G]\to\pi_{2n+2r+k}(G)
\]
can be regarded as the map induced by $\Sigma^{k-1}H_{n+r}$ on stable cohomotopy, and, as such, is just the precomposition with the stable Hopf map 
\[
    \Sigma^NS^{2n+2r+k} \xrightarrow{\Sigma^N\Sigma^{k-1}H_{n+r}}\Sigma^N\Sigma^{k-1}(\cp^{n+r}/\cp^n)\xrightarrow{x}S^N,\quad N\gg2n+2r+k.
\]
In particular, this characterization of the differentials is independent of $k$. Finally, since the stable groups $\pi_i^s$ are finite for $i>0$, the spectral sequence splits into $p$-primary components. 

\subsection{The Odd-Primary Sequences}
We use the results of Imanishi \cite{Imanishi} on the $p$-primary homotopy type of $\cp^{n+r}/\cp^{n}$ for odd $p$.
\begin{lema}
\label{stuntIman}
    Let $p$ be an odd prime.
    \begin{enumerate}
        \item For $r<p^2-2$, $\cp^{n+r}/\cp^{n-1}$ has the $\mod p$ homotopy type of a wedge $\bigvee_{i=0}^{p-2}X_{n+i}$, where $$X_{n+i}=S^{2(n+i)}\cup e^{2(n+i+(p-1))}\cup\dots\cup e^{2(n+i+t(p-1))},\hspace{5pt} t=\left\lfloor\frac{r-i}{p-1}
        \right\rfloor.$$
        \item The attaching map $S^{2n+2(p-1)-1}\to S^{2n}$ of the second cell of $X_n$ has order
        \begin{align*}
            \begin{cases}
                p &\text{ if }n\not\equiv0\mod p \\
                1 &\text{ if }n\equiv0\mod p.
            \end{cases}
        \end{align*}
        \item The attaching map $S^{2n+4(p-1)-1}\to S^{2n}\cup e^{2(n+(p-1))}$ of the third cell of $X_n$ has order 
        \begin{align*}
            \begin{cases}
                p^2 &\text{ if }n\not\equiv0,1\mod p \\
                1 &\text{ if }n\equiv-2p+1\mod p^2 \\
                p &\text{ otherwise}.
            \end{cases}
        \end{align*}
    \end{enumerate}
\end{lema}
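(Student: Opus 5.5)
The plan is to work throughout in the $p$-local stable homotopy category. All three statements are stable for $r<p^2-2$, since the cells involved lie in a stable range after suspension and the spectral sequence above only uses stable information. The tools are mod $p$ cohomology with reduced Steenrod powers, and complex $K$-theory with Adams operations, which detect the image of $J$ at odd primes.

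\textbf{Step 1 (splitting).} The first step is the wedge decomposition (1). Write $H^*(\cp^{n+r}/\cp^{n-1};\Z_{(p)})$ as free on $x^{n},\dots,x^{n+r}$. The Adams operations $\psi^k$ act on the $p$-completed $K$-theory $K(\cp^{n+r}/\cp^{n-1})^\wedge_p$. Choose $k$ a topological generator of $\Z_p^\times$. The eigenvalues of $\psi^k$ on the associated graded are $k^{n+i}$, and these are congruent mod $p$ exactly when the exponents agree mod $p-1$. So the idempotents $e_j=\prod_{i\not\equiv j}(\psi^k-k^{n+i})/(k^{n+j}-k^{n+i})$, for $j$ mod $p-1$, are well defined after $p$-completion.

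I would realise these idempotents by stable self-maps, i.e.\ stable Adams operations on the stunted projective spaces, which exist $p$-locally. The standard approach is Adams' construction of $\psi^k$ as a stable map via the transfer or via the Becker--Gottlieb type construction on $\cp^\infty$, then restricting to the stunted quotient. Telescoping these maps splits $\cp^{n+r}/\cp^{n-1}$ $p$-locally into summands $X_{n+i}$ whose cells lie in dimensions $2(n+i+s(p-1))$. The bound $r<p^2-2$ ensures two things: the relevant exponents are distinct mod $p-1$, and no cells from the $\beta_1$-range enter. This matters because the first $p$-torsion in $\pi_*^s$ outside im$J$ is $\beta_1$, in stem $2p^2-2p-2$.

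\textbf{Step 2 (second cell).} For (2), the attaching map of the second cell of $X_n$ lies in $\pi^s_{2p-3}\otimes\Z_{(p)}\cong\Z_p\{\alpha_1\}$. It is nontrivial if and only if $P^1$ acts nontrivially from degree $2n$ to degree $2n+2(p-1)$. Since $P^1x^n=n\,x^{n+p-1}$, the map is nontrivial exactly when $n\not\equiv0 \bmod p$. In that case it has order $p$, because $\alpha_1$ has order $p$.

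\textbf{Step 3 (third cell).} For (3), I would compute the $e$-invariant of the attaching map of the third cell of $X_n$ using the Chern character of a $K$-theory basis of $X_n$. Take $y=e^{x}-1$ in $K(\cp^\infty)$. A basis of $\widetilde K$ of the stunted space is given by $y^n,\dots$. One projects to the $X_n$ summand and expresses $\mathrm{ch}$ in terms of $x^{n},x^{n+p-1},x^{n+2(p-1)}$. The coefficients are Stirling-type numbers, essentially $\binom{n+j}{\cdot}$-weighted coefficients of $(e^x-1)^n$. The $p$-adic valuations of the off-diagonal entries determine the orders, and I would then show:
\begin{itemize}
\item order $p^2$ when $n\not\equiv 0,1 \bmod p$;
\item order $1$ in the special residue class $n\equiv -2p+1 \bmod p^2$;
\item order $p$ otherwise.
\end{itemize}
The upper bounds come from the fact that the attaching map lies in a group filtered by $\pi^s_{4p-5}$ and an $\alpha_1$-extension, with total order at most $p^2$. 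The lower bounds come from the $e$-invariant, or equivalently from Toda-bracket relations $\langle\alpha_1,p,\alpha_1\rangle$. I expect this step to be the main obstacle: the precise valuation computation producing the condition mod $p^2$ is delicate, and I would follow Imanishi's explicit Chern character bookkeeping, checking it against the known James numbers.
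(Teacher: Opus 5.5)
The paper does not argue this lemma itself; it quotes (1) from \cite[Lemma 1.1]{Imanishi} and (2), (3) from \cite[Proposition 2.4]{Imanishi}. Your Steps~1 and~2 are a genuinely self-contained alternative. They are correct in the stable $p$-local category, which is all the paper uses: Corollary~\ref{oddprimdiffer} only involves stable cohomotopy of stunted spaces.

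Step~2 is fine. Step~1 needs three corrections.
\begin{enumerate}
\item The bound $r<p^2-2$ plays no role in your splitting. The self-maps acting by $k^m$ on $H^{2m}$ are just the $k$-th power maps of $\cp^\infty$ made cellular, so no transfer is needed. The telescope argument then splits $\cp^{n+r}/\cp^{n-1}$ stably at $p$ for every $r$.
\item The reasons you give for the bound are wrong. The exponents $n,\dots,n+r$ are far from distinct mod $p-1$; if they were, every $X_{n+i}$ would be a single sphere. Also, $\beta_1$ lies in the even stem $2p^2-2p-2$, so it can never attach one even cell to another. The bound is equivalent to $2r-1<2p^2-5=|\alpha_1\beta_1|$, i.e.\ it keeps all attaching maps in the image of $J$. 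That is presumably why a $K$-theoretic method like Imanishi's needs it.
\item Your $e_j$ is not idempotent. On $x^{n+i'}$ with $i'\equiv j$, $i'\neq j$, it acts by a unit $\equiv 1$ mod $p$, not by $1$. This is harmless only because you then pass to telescopes.
\end{enumerate}

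The genuine gap is Step~3. You justify only the upper bound $p^2$ and then assert the three cases, deferring the decisive computation to Imanishi's bookkeeping. So on the one subtle point your proposal contains nothing beyond the paper's citation.

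The missing observation is that $P^1$ already settles all but one residue class. The component of the third attaching map $\phi$ on the middle cell is detected by $P^1x^{n+p-1}=(n-1)x^{n+2p-2}$.
\begin{itemize}
\item If $n\not\equiv 0,1$, the middle cell is attached by a unit times $\alpha_1$. Then $\phi$ lies in $\pi^s_{2n+4p-5}(S^{2n}\cup_{\alpha_1}e^{2n+2p-2})\cong\Z_{p^2}$, the group from \cite{toda1959p} used in the proof of Corollary~\ref{oddprimdiffer}. The collapse onto $\Z_p\{\alpha_1\}$ has kernel $i_*\Z_p\{\alpha_2\}=p\Z_{p^2}$, and $\phi$ maps to a generator, so $\phi$ has order $p^2$.
\item If $n\equiv 0$, the middle cell splits off and the group is $\Z_p\{\alpha_2\}\oplus\Z_p\{\alpha_1\}$. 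Since $\phi$ has nonzero $\alpha_1$-component, its order is $p$.
\item If $n\equiv 1$, then $\phi\in i_*\Z_p\{\alpha_2\}$, so it has order $1$ or $p$. It is trivial iff the top cell of $\cp^{n+2p-2}/\cp^{n-1}$ splits off $p$-locally.
\end{itemize}

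For the last case, Atiyah--Todd and Adams--Walker show this splitting happens iff $p^{\nu_p(M_{2p-1})}$ divides $n+2p-1$. For the Atiyah--Todd number, $\nu_p(M_{2p-1})=\max\{s+\nu_p(s):1\le s\le 2\}=2$, so the condition is exactly $n\equiv -2p+1 \bmod p^2$.

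Equivalently, you can read off the $p$-part of the James number $U(n+2p-1,2p-1)$, as the paper does at $p=2$ in Lemma~\ref{2primdiffer}. For $p=3$, Table~\ref{tab:james-numbers} gives $9/(n+5,9)(n+6,3)$, which reproduces all three cases. This replaces the Chern-character bookkeeping you were planning to carry out.
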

\begin{proof} 
    (1) is a corollary of \cite[Lemma 1.1]{Imanishi} as stated in the paragraph below Lemma 1.1. (2) and (3) follow from \cite[Proposition 2.4]{Imanishi}.
\end{proof}
In the following, we will use the definition of Toda brackets via extensions and coextensions, which we now present. Let $\alpha\in\{Y,Z\}, \beta\in\{X,Y\}$ and $\gamma\in\{W,X\}$ such that $\alpha\circ\beta=0$ and $\beta\circ\gamma=0$, where $\{X,Y\}=\lim_{n\to\infty}[\Sigma^nX,\Sigma^nY]$ is the set of stable homotopy classes of pointed maps. We say that $\underline{\alpha}\in\{C_{\beta},Z\}$ is the extension of $\alpha$ along $\beta$ if it satisfies $\underline{\alpha}\circ i=\alpha$, where $i:Y\hookrightarrow C_{\beta}$ and $C_{\beta}$ is the mapping cone $Y\cup_{\beta}CX$. Similarly, we call $\widetilde{\gamma}\in\{\Sigma W,C_{\beta}\}$ the coextension of $\gamma$ along $\beta$ if $\pi\circ\widetilde{\gamma}=\Sigma\gamma$, where $\pi:C_{\beta}\to\Sigma X$ collapses $Y\subset C_{\beta}$. We denote by $\ext(\alpha,\beta)$ and $\coext(\beta,\gamma)$ the set of all extensions $\underline{\alpha}$ and coextensions $\widetilde{\gamma}$. The Toda bracket $\langle\alpha,\beta,\gamma\rangle$ is then defined as the set of compositions
\[
    \langle\alpha,\beta,\gamma\rangle=\ext(\alpha,\beta)\circ\coext(\beta,\gamma)\subseteq\{\Sigma W,Z\}.
\]
\begin{cor}(Brumfiel \cite[Proposition 5.6]{Brumfiel-(1970))})
\label{oddprimdiffer}
    \begin{enumerate}
        \item For $r<p^2-2$, the differentials ${}_pd_{r}^{k,n}(G)$ in the $p$-primary spectral sequence are zero unless $r=s(p-1)$ for $1\leq s\leq p-1$.
        \item Up to multiplication by a unit in $\Z_p$ we have:
        \begin{align*}
            {}_pd_{p-1}^{k,n}(G)=
            \begin{cases}
                \alpha_1 &\text{ if }n+1\not\equiv0\mod p \\
                0 &\text{ if }n+1\equiv0\mod p.
            \end{cases}
        \end{align*}
        where by $\alpha_1$ we mean composition with the generator $\alpha_1\in{}_p\pi_{2(p-1)-1}^s\cong\Z_p$.
        \item Up to multiplication by a unit, we have:
        \begin{align*}
            {}_pd_{2(p-1)}^{k,n}(G)=
            \begin{cases}
                \langle\cdot,\alpha_1,\alpha_1\rangle &\text{ if }n+1\not\equiv0,1\mod p \\
                \alpha_{2} & \text{if }n+1\equiv1\mod p\text{ and } n+1\not\equiv-2p+1\mod p^2 \\
                0 & \text{if }n+1\equiv-2p+1\mod p^2 \\
                0\text{ or }\alpha_2 &\text{if }n+1\equiv0\mod p.
            \end{cases}
        \end{align*}
        where by $\alpha_2$ we mean composition with the generator $\alpha_2\in{}_p\pi_{4(p-1)-1}^s\cong\Z_p$ and $\langle\cdot,\alpha_1,\alpha_1\rangle$ is the Toda bracket operation.
    \end{enumerate}
\end{cor}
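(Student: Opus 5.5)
The plan is to combine the description of the differentials in \eqref{eq:differential-source-bottom-cell}--\eqref{eq:differential-target-Hopf-map} with Imanishi's decomposition in Lemma~\ref{stuntIman}. Since the differentials are independent of $k$, we may fix $k$ and work stably, $p$-locally. The differential ${}_pd_r^{k,n}(G)$ is computed as follows. Take a stable map $x\colon \cp^{n+r}/\cp^n\to S^0$ whose restriction to the bottom cell $S^{2n+2}$ is the given class $\alpha$. Then ${}_pd_r^{k,n}(G)(\alpha)$ is the composite of $x$ with the stable Hopf map, that is, with the attaching map of the top cell of $\cp^{n+r+1}/\cp^n$. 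Applying Lemma~\ref{stuntIman} with $n$ replaced by $n+1$, the complex $\cp^{n+r+1}/\cp^{n}$ splits $p$-locally as a wedge $\bigvee_i X_{n+1+i}$. Each $X_{n+1+i}$ has cells only in dimensions congruent to $2(n+1+i)$ modulo $2(p-1)$.

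For part (1), the top cell of dimension $2(n+r+1)$ lies in the summand $X_{n+1+i}$ with $r\equiv i \pmod{p-1}$. The bottom cell lies in $X_{n+1}$. If $r\not\equiv 0 \pmod{p-1}$, the attaching map of the top cell has no component onto the bottom sphere. We can therefore choose the extension $x$ to be zero on every summand other than $X_{n+1}$, and then the composite with the Hopf map vanishes. When $r\equiv 0$, we have $r=s(p-1)$, and $r<p^2-2$ forces $s\le p-1$. A little care is needed here: a different choice of $x$ changes the answer only by earlier differentials, which is consistent with the spectral sequence bookkeeping.

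For part (2), take $r=p-1$. The relevant piece is the two-cell complex $S^{2n+2}\cup e^{2n+2p}$, namely the first two cells of $X_{n+1}$. Its attaching map lies in ${}_p\pi^s_{2p-3}\cong\Z_p\{\alpha_1\}$. By Lemma~\ref{stuntIman}(2), this attaching map is a unit multiple of $\alpha_1$ when $n+1\not\equiv 0\pmod p$ and is zero otherwise. The differential is then composition with this attaching map, up to sign, which gives (2).

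For part (3), take $r=2(p-1)$. Now the relevant piece is the three-cell complex $X_{n+1}$ truncated at its third cell. The differential sends $\alpha$ to the composite of an extension of $\alpha$ over the first two cells with the attaching map of the third cell. That attaching map is a coextension of the secondary attaching map $S^{2n+2p-1}\to S^{2n+2}$ along the first attaching map. When $n+1\not\equiv 0,1\pmod p$, both attaching maps are essential multiples of $\alpha_1$, and the composite is by definition an element of $\ext(\alpha,\alpha_1)\circ\coext(\alpha_1,\alpha_1)=\langle\alpha,\alpha_1,\alpha_1\rangle$. The remaining cases are handled as follows.
\begin{itemize}
\item If $n+1\equiv 1\pmod p$, the first attaching map is trivial, so the complex splits off the middle cell. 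The top cell then attaches directly to the bottom sphere by an element of ${}_p\pi^s_{4p-5}\cong\Z_p\{\alpha_2\}$. By Lemma~\ref{stuntIman}(3), its order is $p$ (giving $\alpha_2$) or $1$ (giving $0$, exactly when $n+1\equiv -2p+1 \pmod{p^2}$).
\item If $n+1\equiv 0\pmod p$, the first attaching map is trivial, and the order statement alone does not decide whether the component onto the bottom cell is nontrivial. Hence we only obtain ``$0$ or $\alpha_2$''.
\end{itemize}

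I expect the main obstacle to be this identification in part (3). One has to check carefully that, up to units, the attaching map of the third cell really is a coextension of $\alpha_1$, and that the indeterminacy of the Toda bracket matches the indeterminacy coming from the choice of $x$, which is the image of earlier differentials. One also has to verify that the order information of Lemma~\ref{stuntIman}(3) translates into the stated vanishing conditions. As in Brumfiel's argument, I would first try to compare the orders in Lemma~\ref{stuntIman}(3) with $p^2$ being detected by a nontrivial bracket $\langle\cdot,\alpha_1,\alpha_1\rangle$.
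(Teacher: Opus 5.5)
Your strategy is the paper's. You describe $d_r$ through the bottom-cell restriction of an extension $x$ and the stable Hopf map, split the stunted projective space by Lemma~\ref{stuntIman}(1), and read off the differentials from the orders in Lemma~\ref{stuntIman}(2),(3) together with Toda brackets. Parts (1), (2) and the case $n+1\equiv 0$ of (3) run along the same lines as the paper.

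The genuine error is in (3) for $n+1\equiv 1\pmod p$. You say the first attaching map is trivial, so the middle cell splits off. This contradicts Lemma~\ref{stuntIman}(2), which you applied correctly in part (2): the second cell of $X_{n+1}$ is attached by a unit multiple of $\alpha_1$ whenever $n+1\not\equiv 0\pmod p$, in particular when $n+1\equiv 1$. So the relevant complex is $(S^{2n+2}\cup_{\alpha_1}e^{2n+2p})\cup_{\phi}e^{2n+4p-2}$ with $\phi$ the $X_{n+1}$-component of $[H_{n+2(p-1)}]$, and nothing splits. What actually vanishes for $n+1\equiv 1$ is the component of $\phi$ onto the \emph{second} cell. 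After pinching the bottom cell, this is the attaching map between the cells in dimensions $2(n+p)$ and $2(n+2p-1)$, and $n+p\equiv n\equiv 0\pmod p$. Equivalently, $P^1x^{n+p}=(n+p)x^{n+2p-1}$ vanishes mod $p$. The same observation with $n+p\not\equiv 0$ is also what you need, and did not supply, for $n+1\not\equiv 0,1$. There it shows the second attaching map is essential, i.e.\ that $\phi$ really is a coextension of a unit multiple of $\alpha_1$.

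With this corrected, the case $n+1\equiv 1$ goes as follows. The pinch $q: S^{2n+2}\cup_{\alpha_1}e^{2n+2p}\to S^{2n+2p}$ satisfies $q\circ\phi=0$. Exactness of the cofibre sequence then gives $\phi=i\circ\beta$ with $\beta\in{}_p\pi^s_{4p-5}\cong\Z_p\{\alpha_2\}$, where $i$ is the bottom-cell inclusion. Here $i_*$ is injective, because its kernel is $\alpha_1\circ{}_p\pi^s_{2p-2}=0$. Hence $\beta$ has the order of $\phi$. Lemma~\ref{stuntIman}(3) gives this order as $p$, or $1$ exactly when $n+1\equiv -2p+1\pmod{p^2}$. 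The differential is then $x\circ\phi=\alpha\circ\beta$.

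The paper reaches the same conclusion without analysing the second attaching map. It uses Toda's computation that ${}_p\pi^s_{2n+4p-3}(S^{2n+2}\cup_{\alpha_1}e^{2n+2p})\cong\Z_{p^2}$ is generated by a coextension $\widetilde{\alpha_1}$. An element of order $p^2$ is therefore a unit times $\widetilde{\alpha_1}$, which gives the bracket. An element of order $p$ is a unit times $p\widetilde{\alpha_1}$, which up to a unit is the bottom cell composed with $\alpha_2$ via $\langle p\iota,\alpha_1,\alpha_1\rangle=\alpha_2$. So your final answer for $n+1\equiv 1$ is right, but the stated reason is false and must be replaced.

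A smaller point: in (1), the claim that $r=s(p-1)<p^2-2$ forces $s\le p-1$ is wrong for odd $p$; for example $p=3$, $r=6<7$ gives $s=3$. The wedge argument, yours as well as the paper's, only shows that $d_r=0$ for $r\not\equiv 0\pmod{p-1}$.
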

\begin{proof}
    Recall that by \eqref{eq:differential-source-bottom-cell} and \eqref{eq:differential-target-Hopf-map} the differential ${}_pd_r^{k,n}(G)$ maps the image of $[x]\in{}_p[\Sigma^{k-1}\cp^{n+r}/\cp^n,G]$ under the map 
    \begin{equation}
    \label{eq:differential-source-p-primary}
        {}_p[\text{bottom cell},G]:{}_p[\Sigma^{k-1}\cp^{n+r}/\cp^n,G]\to{}_p\pi_{2n+k+1}(G)
    \end{equation}
    to the image of $[x]$ under
    \begin{equation}
    \label{eq:differential-target-p-primary}
        {}_p[\Sigma^{k-1}H_{n+r},G]:{}_p[\Sigma^{k-1}\cp^{n+r}/\cp^n,G]\to{}_p\pi_{2n+2r+k}(G).
    \end{equation}
    Furthermore, as already discussed, this characterization is independent of $k$. We only need to study the case where $k=1$.
    
        \textbf{(1)} The Hopf map $H_{n+r}:S^{2n+2r+1}\to\cp^{n+r}/\cp^{n}$ corresponds to the attaching map of the top cell of $X_{(n+1)+i}$ from (1) in Lemma \ref{stuntIman}, where $i\equiv r\mod(p-1)$. We have 
        \[
            {}_p[\textstyle\cp^{(n+1)+r}/\cp^{n},G]\cong{}_p[X_{(n+1)},G]\oplus\dots\oplus{}_p[X_{(n+1)+p-2},G].
        \] 
        Since the bottom cell belongs to the subcomplex $X_{(n+1)}$, the subgroup 
        \[
            {}_p[X_{(n+1)+1},G]\oplus\dots\oplus{}_p[X_{(n+1)+p-2},G]
        \]
        maps to zero under \eqref{eq:differential-source-p-primary}. Analogously, the subgroup 
        \[
            \bigoplus_{\substack{0\leq j\leq p-2 \\ j\neq i}}{}_p[X_{(n+1)+j},G]
        \]
        maps to zero under \eqref{eq:differential-target-p-primary}. Thus, ${}_pd_r^{k,n}(G)$ can be non-zero only when $X_{(n+1)+i}=X_{(n+1)}$, i.e. when $r\equiv0\mod(p-1)$.
    
        \textbf{(2)} From (1) in Lemma \ref{stuntIman} it follows that $\cp^{n+(p-1)}/\cp^n\simeq_{p} \bigvee_{i=0}^{p-2}S^{2(n+1)+2i}$. The (stable) Hopf map $\Sigma^NH_{n+(p-1)}$ represents the element $[H_{n+(p-1)}]$ in
        \begin{align*}
            {}_p\pi_{2n+2p-1}^s(\textstyle\cp^{n+p-1}/\cp^{n})&\cong{}_p\pi_{2n+2p-1}^s(S^{2(n+1)})\oplus\dots\oplus{}_p\pi_{2n+2p-1}^s(S^{2(n+1)+2(p-2)}) \\
            &\cong{}_p\pi_{2p-3}^s\oplus\dots\oplus{}_p\pi_{2p-3-2(p-2)}^s,    
        \end{align*}
        where the isomorphism on the first summand is given by $\pi_{2n+2p-1}^s(\text{bottom cell})$, so the generator of this summand is represented by the composition 
        \[
            S^{N+2n+2p-1}\xrightarrow{\alpha_1}S^{N+2(n+1)}\xrightarrow{\text{bottom cell}}\Sigma^N\cp^{n+p-1}/\cp^n.
        \]
        Now, from (2) in Lemma \ref{stuntIman} it follows that 
        \begin{align*}
            [H_{n+(p-1)}]= 
            \begin{cases}
                \pi_{2n+2p-1}^s(\text{bottom cell})(\alpha_1)=[\text{bottom cell}\circ\alpha_1] & \text{if $n+1\not\equiv0\mod p$} \\
                0 & \text{if $n+1\equiv0\mod p$}
            \end{cases}
        \end{align*}
        The differential ${}_pd_{p-1}^{k,n}(G)$ therefore maps the composition $$S^{2n+2}\xrightarrow{\text{bottom cell}}\textstyle\cp^{n+p-1}/\cp^{n}\xrightarrow{x}G$$ either to $0$, or to the composition $$S^{2n+2p-1}\xrightarrow{\alpha_1}S^{2n+2}\xrightarrow{\text{bottom cell}}\textstyle\cp^{n+p-1}/\cp^{n}\xrightarrow{x}G.$$
    
        \textbf{(3)} Again, from (1) in Lemma \ref{stuntIman} it follows that 
        \begin{align*}
            {}_p\pi_{2n+4p-3}^s(\textstyle\cp^{n+2(p-1)}/\cp^{n})&\cong{}_p\pi_{2n+4p-3}^s(X_{(n+1)})\oplus\dots\oplus{}_p\pi_{2n+4p- 3}^s(X_{(n+1)+p-2})
        \end{align*} 
        where (using results from the proof of (2) above)
        \begin{align*}
            {}_p\pi_{2n+4p-3}^s(X_{(n+1)})\cong
            \begin{cases}
                {}_p\pi_{2n+4p-3}^s(S^{2n+2}\bigcup_{\alpha_1}e^{2n+2p}) & \text{if $n+1\not\equiv0\mod p$} \\
                {}_p\pi_{2n+4p-3}^s(S^{2n+2}\vee S^{2n+2p}) &   \text{if $n+1\equiv0\mod p$} 
            \end{cases}
        \end{align*}    
        First suppose $n+1\equiv0\mod p$. Then by \cite[Thm. 4.15]{toda1959p}
        \[
            {}_p\pi_{2n+4p-3}^s(X_{(n+1)})\cong{}_p\pi_{4p-5}^s\oplus{}_p\pi_{2p-3}^s\cong\textstyle\Z_p\{\alpha_2\}\oplus\Z_p\{\alpha_1\}.
        \]
        The isomorphism is given on the first summand by $\pi_{2n+4p-3}^s(\text{bottom cell})$, so the generator of this summand is $[\text{bottom cell}\circ\alpha_2]$. From $(3)$ in Lemma~\ref{stuntIman} we know that the Hopf map represents an element of order $p$ in this group. Thus, on the first summand $[H_{n+2(p-1)}]$ is given by $[\text{bottom cell}\circ\alpha_2]$ or $0$.
        
        In the remaining cases where $n+1\not\equiv0\mod p$, it follows from \cite[Prop.~4.21]{toda1959p} that ${}_p\pi_{2n+4p-3}^s(S^{2n+2}\bigcup_{\alpha_1}e^{2n+2p})\cong\Z_{p^2}$ generated by $\widetilde{\alpha_1}\in\coext(\alpha_1,\alpha_1)$. Since 
        \[
            (\id:S^{2n+2}\cup_{\alpha_1}e^{2n+2p} \to S^{2n+2}\cup_{\alpha_1}e^{2n+2p})\in\ext(\text{bottom cell},\alpha_1),
        \]
        $\widetilde{\alpha_1}\in\langle\text{bottom cell},\alpha_1,\alpha_1\rangle$ and we get from (3) in Lemma~\ref{stuntIman} the following:
        \begin{align*}
            [H_{n+2(p-1)}]\in 
            \begin{cases}
                \langle\text{bottom cell},\alpha_1,\alpha_1\rangle & \text{if $n+1\not\equiv1\mod p$} \\
                0 & \text{if $n+1\equiv-2p+1\mod p^2$} \\
                p\langle\text{bottom cell},\alpha_1,\alpha_1\rangle & \text{otherwise}
            \end{cases}
        \end{align*}
        Finally, we observe that  $\langle p\cdot\id,\alpha_1,\alpha_1\rangle=\alpha_2$ by \cite[13.4]{toda1962composition}. 
\end{proof}

\subsection{The $2$-Primary Sequence}
\begin{lema}(Brumfiel \cite[Proposition 5.8]{Brumfiel-(1970))})
\label{2primdiffer}
    Up to multiplication by a unit, the $2$-primary differentials on the first four pages are given by:
    \begin{align*}
        {}_2d_1^{k,n}(G)&=
        \begin{cases}
            \eta & \text{if $n$ is even} \\
            0 & \text{if $n$ is odd}
        \end{cases}\\ 
        \textit{where }&\eta\in{}_2\pi_1^s\textit{ is the generator,} \\
        {}_2d_2^{k,n}(G)&=
        \begin{cases}
            \nu & \text{if $n\equiv1,2,5,6\mod8$} \\
            2\nu & \text{if $n\equiv0,3\mod8$} \\
            0 & \text{if $n\equiv4,7\mod8$}
        \end{cases}\\
        \textit{where }&\nu\in{}_2\pi_3^s\textit{ is the generator,} \\
        {}_2d_3^{k,n}(G)&=
        \begin{cases}
            \nu\eta^{-1}\nu & \text{if $n\equiv1,5\mod8$} \\
            \langle\cdot,\eta,\nu\rangle & \text{if $n\equiv4\mod8$} \\
            \langle\cdot,\nu,\eta\rangle & \text{if $n\equiv6\mod8$} \\
            \langle\cdot,\eta,\nu\rangle + \langle\cdot,2\nu,\eta\rangle & \text{if $n\equiv0\mod8$} \\
            2\langle\cdot,\eta,\nu\rangle + \langle\cdot,\nu,\eta\rangle & \text{if $n\equiv2\mod8$} \\
            0 & \text{if $n\equiv3,7\mod8$}
        \end{cases} \\
        {}_2d_4^{k,n}(G)&=
        \begin{cases}
            \sigma + \langle\cdot,2\nu,\nu\rangle & \text{if $n\equiv3\mod8$} \\
            \sigma & \text{if $n\equiv4\mod8$} \\
            \sigma + \langle\cdot,\nu\eta^{-1}\nu,\eta\rangle & \text{if $n\equiv5\mod8$} \\
            \sigma + \langle\cdot,\nu,2\nu\rangle & \text{if $n\equiv6\mod8$} \\
            2^{j_0}\sigma & \text{if $n\equiv7\mod8$} \\
            2^{j_1}\sigma + \langle\cdot,2\nu,\nu\rangle & \text{if $n\equiv0\mod8$} \\
            2^{j_2}\sigma + \langle\cdot,\nu,2\nu\rangle + \langle\cdot,\nu\eta^{-1}\nu,\eta\rangle & \text{if $n\equiv1\mod8$} \\
            2^{j_3}\sigma & \text{if $n\equiv2\mod8$}
        \end{cases} \\
        \textit{where }&\sigma\in{}_2\pi_7^s\textit{ is the generator, and }j_0,j_1,j_2,j_3>0\textit{ and depend on }n\mod64.
    \end{align*}
\end{lema}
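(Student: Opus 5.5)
We follow the strategy of the proof of Corollary~\ref{oddprimdiffer}. By \eqref{eq:differential-source-bottom-cell} and \eqref{eq:differential-target-Hopf-map}, and since the description is independent of $k$, it suffices to know the $2$-local stable homotopy type of the stunted spaces $\cp^{n+r}/\cp^n$ for $r\le4$. We also need the class of the stable Hopf map $[H_{n+r}]\in{}_2\pi^s_{2n+2r+1}(\cp^{n+r}/\cp^n)$, written in terms of the cells. The differential ${}_2d_r^{k,n}(G)$ sends the bottom-cell restriction of $x\in\pi_s^0(\cp^{n+r}/\cp^n)$ to $x\circ[H_{n+r}]$. So in each case we will write $[H_{n+r}]$ as the bottom cell composed with an element of $\pi^s_*$, plus coextensions, and then read off the formula.

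\textbf{Pages 1 and 2.} For $r=1$, $\cp^{n+1}/\cp^{n-1}=S^{2n}\cup e^{2n+2}$, and the attaching map is $\eta$ exactly when $Sq^2$ acts nontrivially. Since $Sq^2 x^n=nx^{n+1}$, the map is $\eta$ for odd $n$ and trivial for even $n$. With the indexing shift $n\mapsto n+1$ this gives $\eta$ precisely when $n$ is even. For $r=2$, $\cp^{n+2}/\cp^{n}$ has three cells. The $2$-local component of the top attaching map on the bottom cell is detected by the $e$-invariant, equivalently by the Chern character, or by $\mathcal P^1$ and $Sq^4$ together with the $K$-theory of $\cp^{n+2}/\cp^n$. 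Computing $\operatorname{ch}$ on Adams' basis $(e^{x}-1)^j$ gives the coefficient of $\nu$ modulo $8$, depending on $n\bmod 8$. We must pass to the $E_2$-page: when the middle cell is attached nontrivially, the top attaching map is only defined modulo $\im d_1$. This yields the pattern $\nu,2\nu,0$.

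\textbf{Pages 3 and 4.} For $r=3$ and $r=4$ we write the top attaching map of $\cp^{n+r}/\cp^n$ as a sum of two kinds of terms: bottom-cell composites, and coextensions through the intermediate two-cell complexes. The coextension terms produce Toda brackets $\langle\cdot,\eta,\nu\rangle$, $\langle\cdot,\nu,\eta\rangle$, $\langle\cdot,2\nu,\nu\rangle$, and so on, via $\ext\circ\coext$, exactly as in part (3) of Corollary~\ref{oddprimdiffer}. Which brackets appear is determined by the $r\le2$ attaching maps already found, that is, by $n\bmod 8$. The bottom-cell component on the fourth page is a multiple of $\sigma$, and we determine it by the complex $e$-invariant. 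Its coefficient is a $2$-adic valuation of a combination of binomial or Bernoulli-type numbers (James numbers; compare \cite{lundell}). This is why the exponents $j_i$ depend on $n\bmod 64$. The expression $\nu\eta^{-1}\nu$ in the third page is a Toda-bracket class with $\eta\cdot\nu\eta^{-1}\nu=\nu^2$. It will arise where the top cell attaches to the bottom cell in $\pi_6^s=\Z_2\{\nu^2\}$ through an $\eta$-attached cell.

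\textbf{Main obstacle.} The hardest step is $r=3,4$. There we have to identify the correct cell structure $2$-locally, for instance using known splittings and stable homotopy types of stunted projective spaces (\cite{Kachi2001SomeCG}). We also have to control the indeterminacies of the brackets modulo the images of earlier differentials. Our first approach will be to compute $KO$/$K$-theory $e$-invariants together with Steenrod operations $Sq^2$, $Sq^4$ and $Sq^8$ on $H^*(\cp^{n+r}/\cp^n)$, and to handle each residue class of $n\bmod8$ separately. For the remaining details we rely on \cite[Proposition 5.8]{Brumfiel-(1970))}.
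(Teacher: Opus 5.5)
Your overall plan matches the paper's. Determine inductively the $2$-local stable cell structure of $\cp^{n+r}/\cp^n$, express $[H_{n+r}]$ through bottom-cell composites and $\ext\circ\coext$ Toda brackets, and compose with $x$. On pages 1 and 2 you use $Sq^2$ and $e$-invariants, where the paper uses the $2$-parts of the James numbers $U(n+r+2,r+1)$ (Table~\ref{tab:james-numbers}) as the stable order of $[H_{n+r}]$. These are equivalent inputs, and those two pages are fine, with two small corrections. The complex $e$-invariant sees $\nu$ only modulo $4\nu=\eta^3$, not modulo $8$. This is harmless, because $4\nu$ is killed either by the $\eta$-attached middle cell ($n$ even) or by $\operatorname{im} d_1$ ($n$ odd). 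Also, $\mathcal{P}^1$ plays no role at $p=2$.

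The gap is in pages 3 and 4, which you do not carry out, and the one concrete mechanism you describe there is wrong.
\begin{itemize}
\item The operation $\nu\eta^{-1}\nu$ raises the stem by $5$, and ${}_2\pi_5^s=0$. So it is neither a bottom-cell composite nor a class with $\eta\cdot\nu\eta^{-1}\nu=\nu^2$. The top cell of $\cp^{n+4}/\cp^n$ meets the bottom cell in stem $5$, not in $\pi_6^s$.
\item What actually happens for $n\equiv1,5\bmod8$: $\cp^{n+3}/\cp^n\simeq_2(S^{2n+2}\vee S^{2n+4})\cup_{(\nu,\eta)}e^{2n+6}$. Its ${}_2\pi^s_{2n+7}\cong\Z_4$ is generated by the split middle cell composed with $\nu$, and $[H_{n+3}]$ is a unit times this generator.
\item A map $x$ restricts to $(a,b)$ on the two lower cells with $a\nu=b\eta$, so $d_3(a)=b\nu$. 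This is a relational operation, defined on exactly those $a$ with $a\nu\in\eta\cdot\pi^s_*$. It is not a composition or bracket applied to $a$.
\item Your template ``bottom-cell composite plus coextensions'' therefore misses it. The same issue recurs in $\langle\cdot,\nu\eta^{-1}\nu,\eta\rangle$ on page 4.
\end{itemize}

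More generally, your claim that ``which brackets appear is determined by the $r\le2$ attaching maps'' needs the argument the paper supplies:
\begin{itemize}
\item Compute ${}_2\pi^s_{2n+2r+1}(\cp^{n+r}/\cp^n)$ from the cofibration sequences, with explicit bracket generators.
\item Get the order of $[H_{n+r}]$ from the James number; this is what forces a unit coefficient of $\sigma$ on the $\Z_{16}$-summand.
\item Detect each remaining component by collapsing a subcomplex (e.g.\ $\cp^{n+2}/\cp^n$, $\cp^{n+3}/\cp^n$, or the bottom cell). This turns $H_{n+r}$ into the Hopf map of a shorter stunted space already computed on an earlier page.
\item Notice when a top cell splits off (e.g.\ $n\equiv3\bmod 8$ on page 4), so that its component does not affect the differential.
\end{itemize}
Finally, deferring ``the remaining details'' to Brumfiel's Proposition~5.8 cites the very statement you are proving.
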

\begin{proof}
    The proof proceeds similarly to the proof of (2) and (3) in Corollary~\ref{oddprimdiffer}. Since we do not have a version of Lemma~\ref{stuntIman} for the 2-primary case, we determine the relevant$\mod 2$ homotopy types inductively as we go through the proof.
    
        \noindent\textbf{(1) 1st page:} \\
        Firstly, $\cp^{n+1}/\cp^n\simeq S^{2n+2}$ and therefore 
        \[
            {}_2\pi_{2n+3}^s(\cp^{n+1}/\cp^n)\cong {}_2\pi_1^s\cong\Z_2
        \]
        This isomorphism is given by the map $\pi_{2n+3}^s(\text{bottom cell})$, so the generator is $[\text{bottom cell}\circ\eta]$. Now, the stable order of the attaching map of the top cell of $\cp^{n+2}/\cp^n$, that is, the order of 
        \[
            \Sigma^NH_{n+1}:S^{N+2n+3}\to \Sigma^N\cp^{n+1}/\cp^n,
        \]
        is given by the James number $U(n+3,2)$; see e.g. \cite[\S6]{imaoka1984stable}. A table of prime factors of relevant James numbers is given in \cite{lundell} and the relevant $2$-factors are summarized in Table \ref{tab:james-numbers}. Since $U(m,2)_2=1$ if $m$ is even and $2$ if $m$ is odd, we get
        \begin{align*}
            [H_{n+1}]=
            \begin{cases}
                0 & \text{if }n\text{ is odd} \\
                [\text{bottom cell}\circ\eta] & \text{if }n\text{ is even}
            \end{cases}
        \end{align*}
        So the differential ${}_2d_1^{k,n}(G)$ maps the composition $$S^{2n+2}\xrightarrow{\text{bottom cell}}\textstyle\cp^{n+1}/\cp^{n}\xrightarrow{x}G$$ either to $0$, or to the composition $$S^{2n+3}\xrightarrow{\eta}S^{2n+2}\xrightarrow{\text{bottom cell}}\textstyle\cp^{n+1}/\cp^{n}\xrightarrow{x}G.$$
    
        \noindent\textbf{(2) 2nd page:} \\
        From now on, we present the proof only for $n\equiv3,6\mod8$. The other cases follow analogously. From the results in (1) it follows that 
        \begin{align*}
            \cp^{n+2}/\cp^{n}\simeq_2
            \begin{cases}
                S^{2n+2}\vee S^{2n+4} &\text{if $n$ is odd} \\
                S^{2n+2}\cup_\eta e^{2n+4} &\text{if $n$ is even}  
            \end{cases}
        \end{align*}
        and from the corresponding exact sequences we obtain
        \begin{align*}
            \textstyle{}_2\pi_{2n+5}^s(\cp^{n+2}/\cp^{n}) \cong
            \begin{cases}
                {}_2\pi_3^s\oplus{}_2\pi_1^s\cong \Z_8\oplus\Z_2 & \text{if }n\text{ is odd} \\
                {}_2\pi_3^s/\{4\nu\}\cong\Z_4 & \text{if }n\text{ is even} 
            \end{cases}
        \end{align*}
        For even $n$, as well as on the first summand for odd $n$, the isomorphism is given by $\pi_{2n+5}^s(\text{bottom cell})$. Thus, the generator of the corresponding subgroup is $[\text{bottom cell}\circ\nu]$. The isomorphism on the second summand for odd $n$ is given by $\pi_{2n+5}^s(\text{top cell})$, the generator is $[\text{top cell}\circ\eta]$. 
        
        The order of $[H_{n+2}]$ is given by the James number $U(n+4,3)$. From Table~\ref{tab:james-numbers} we see that $U(m,3)_2=4$ if $m\equiv2,7\mod8$, so we obtain
        \begin{align*}
            [H_{n+2}]=
            \begin{cases}
                (2[\text{bottom cell}\circ\nu],y)=([\text{bottom cell}\circ2\nu],y) & \text{if }n\equiv3\mod8\\
                [\text{bottom cell}\circ\nu] & \text{if }n\equiv6\mod8
            \end{cases}
        \end{align*}
        where $y$ is some (yet undetermined) element in the second summand. This proves the statement about the 2nd page differentials.
        
        In addition, we determine the element $y$. Note that after collapsing the bottom cell, the map $H_{n+2}$ is the attaching map 
        \[
            S^{2n+5}\xrightarrow{H_{n+2}}\cp^{n+2}/\cp^{n+1}.
        \]
        From the results in (1) it follows that $y=[\text{top cell}\circ\eta]$.
    
        \noindent\textbf{(3) 3rd page:} \\
        From the results of (2) it follows that
        \[
            \cp^{n+3}/\cp^{n}\simeq_2
            \begin{cases}
                (S^{2n+2}\vee S^{2n+4})\cup_{(2\nu,\eta)}e^{2n+6} & \text{if }n\equiv3\mod8 \\
                S^{2n+2}\cup_{(\eta\text{ }\sqcup\text{ }\nu)}(e^{2n+4}\sqcup e^{2n+6}) & \text{if }n\equiv6\mod8
            \end{cases}
        \]
        and from the corresponding exact sequences of 
        \[
            (K,L):=(\cp^{n+3}/\cp^{n},\cp^{n+2}/\cp^{n})
        \]
        we obtain
        \begin{align*}
            \textstyle{}_2\pi_{2n+7}^s(K)\cong
            \begin{cases}
                {}_2\pi_3^s/\{4\nu\}\cong\Z_4 & \text{if }n\equiv3\mod8 \\
                {}_2\pi_3^s\oplus{}_2\pi_1^s\cong \Z_8\oplus\Z_2 & \text{if }n\equiv6\mod8
            \end{cases}
        \end{align*}
        
        The order of $[H_{n+3}]$ is given by $U(n+5,4)$. From Table~\ref{tab:james-numbers}, $U(m,4)_2=1$ if $m\equiv0\mod8$ and $U(m,4)_2=2$ if $m\equiv3\mod8$. Thus $[H_{n+3}]$ has order $2$ if $n\equiv6\mod8$, and is nullhomotopic if $n\equiv3\mod8$ (which proves the statement of the lemma in this case). 

        It remains to deal with the case where $n\equiv6\mod8$. We have $L\simeq_2S^{2n+2}\cup_{\eta}e^{2n+4}$ and $K/L\simeq S^{2n+6}$. It is not difficult to deduce that ${}_2\pi_{2n+7}^s(L)\cong\Z_8$ is generated by a coextension $\widetilde{\nu}\in\coext(\eta,\nu)$. In the long exact sequence of the pair $(K,L)$: 
        \begin{equation}
        \label{eq:long-exact-seq-K-L}
            \dots\to{}_2\pi_{2n+7}^s(L) \xrightarrow{\pi_{2n+7}^s(i)}{}_2\pi_{2n+7}^s(K)\xrightarrow{\pi_{2n+7}^s(q)}{}_2\pi_{1}^s\to\dots
        \end{equation}
        the map $\pi_{2n+7}^s(i)$ is an extension $\underline{\text{bottom cell}}\in\text{Ext}(\text{bottom cell},\eta)$. Since ${}_2\pi_{2n+7}^s(L)$ maps isomorphically onto the first summand of ${}_2\pi_{2n+7}^s(K)$, this summand is generated by 
        \[
            \underline{\text{bottom cell}}\circ\widetilde{\nu}\in\text{Ext}(\text{bottom cell},\eta)\circ\text{Coext}(\eta,\nu)=\langle\text{bottom cell},\eta,\nu\rangle.
        \]

        The generator of the second summand is found analogously, this time using the subcomplex $L'=S^{2n+2}\cup_{\nu}e^{2n+6}$ (it is a subcomplex since the attaching map of the top cell of $K$ retracts to $S^{2n+2}$). This generator belongs to $\langle\text{bottom cell},\nu,\eta\rangle$. The indeterminacy of both Toda brackets is zero.

        Lastly, we need to express the element $[H_{n+3}]$ of order $2$ in terms of generators. By collapsing the subcomplex $L$, $H_{n+3}$ becomes the attaching map $H_{(n+2)+1}$ of the top cell of $\cp^{n+4}/\cp^{n+2}$, which has order $2$ by results from (1). In other words, in \eqref{eq:long-exact-seq-K-L} we have 
        \[
            {}_2\pi_{2n+7}^s(q)[H_{n+3}]=\eta.
        \]
        Thus, $[H_{n+3}]$ is given by $\langle\text{bottom cell},\nu,\eta\rangle$ on the second summand. Similarly, we have $K/S^{2n+2}\simeq\cp^{n+3}/\cp^{n+1}$ and $H_{n+3}$ becomes the attaching map $H_{(n+1)+2}$ of the top cell of $\cp^{n+4}/\cp^{n+1}$. This map is nullhomotopic, since $U(m,3)_2=1$ for $m\equiv0\mod8$. On the first summand, $[H_{n+3}]$ is therefore zero. To sum up: 
        \begin{align*}
            [H_{n+3}]=
            \begin{cases}
                0 & \text{if }n\equiv3\mod8\\
                (0,\langle\text{bottom cell},\nu,\eta\rangle) & \text{if }n\equiv6\mod8
            \end{cases}
        \end{align*}

        \noindent\textbf{(4) 4th page:} \\
        From the results of (3) we get 
        \[
            \cp^{n+4}/\cp^{n}\simeq_2
            \begin{cases}
                [(S^{2n+2}\vee S^{2n+4})\cup_{(2\nu,\eta)}e^{2n+6}]\vee S^{2n+8} & \text{if }n\equiv3\mod8 \\
                [S^{2n+2}\cup_{(\eta\text{ }\sqcup\text{ }\nu)}(e^{2n+4}\sqcup e^{2n+6})]\cup_{\phi}e^{2n+8} & \text{if }n\equiv6\mod8
            \end{cases}
        \]
        where the attaching map $\phi$ is nullhomotopic on $S^{2n+2}\cup_{\eta}e^{2n+4}$. From the long exact sequences of
        \[
            (K,L):=(\cp^{n+4}/\cp^{n},\cp^{n+3}/\cp^{n})
        \]
        we obtain
        \begin{align*}
            \textstyle{}_2\pi_{2n+9}^s(K)\cong
            \begin{cases}
                {}_2\pi_7^s\oplus{}_2\pi_{3}^s\oplus{}_2 \pi_1^s\cong\Z_{16}\oplus\Z_8\oplus\Z_2 & \text{if }n\equiv3\mod8 \\{}_2\pi_7^s\oplus(2\cdot{}_2\pi_3^s/4\nu)\cong\Z_{16}\oplus\Z_2 & \text{if }n\equiv6\mod8
            \end{cases}
        \end{align*}
        The generator of the first summand is in both cases $[\text{bottom cell}\circ\sigma]$. The generator of third summand for $n\equiv3\mod8$ is clearly $[\text{top cell}\circ\eta]$. For $n\equiv6\mod8$, the generator of the second summand is $\langle\text{bottom cell},\nu,2\nu\rangle$. This follows (similarly as in (3)) from the exact sequence of $(K,L')$, where $L':=S^{2n+2}\cup_{\nu}e^{2n+6}$ is a subcomplex. Finally, to obtain the generator of the second summand for $n\equiv3\mod8$ we first quotient out the $(2n+4)$-cell and get $K_0:=K/S^{2n+4}$, where 
        \[
            {}_2\pi_{2n+9}^s(K_0)\cong{}_2\pi_{2n+9}^s(K).
        \]
        The generator of the second summand of ${}_2\pi_{2n+9}^s(K_0)$ is $\langle\text{bottom cell},2\nu,\nu\rangle$. This follows from the exact sequence of $(K_0,L'')$, where 
        \[
            L'':=S^{2n+2}\cup_{2\nu}e^{2n+6}
        \] 
        is a subcomplex.
    
        The order of $[H_{n+4}]$ is given by $U(n+6,5)$. From Table~\ref{tab:james-numbers}, $U(m,5)_2=16$ if $m\equiv1,4\mod8$, so $H_{n+4}$ has order $16$ if $n\equiv3,6\mod8$ and, in both cases, is given on the first summand by $[\text{bottom cell}\circ\sigma]$. 

        In order to express $H_{n+4}$ on the second summand by generators, we define $L_0:=\cp^{n+2}/\cp^n$. Then $K/L_0\simeq\cp^{n+4}/\cp^{n+2}$ and $H_{n+4}$ becomes the attaching map $H_{(n+2)+2}$ of the top cell of $\cp^{n+5}/\cp^{n+2}$. From results in (2) it follows that this map has order $8$ if $n\equiv3\mod8$, and $2$ if $n\equiv6\mod8$. From analogous arguments with the subcomplex $L_0':=\cp^{n+3}/\cp^n$ it follows that for $n\equiv3\mod8$ the restriction of $[H_{n+4}]$ to the third summand has order $2$. Thus, we have in ${}_2\pi_{2n+9}^s(K)$:
        \begin{align*}
            [H_{n+4}]=
            \begin{cases}
                ([\text{bottom cell}\circ\sigma],\langle\text{bottom cell},2\nu,\nu\rangle,[\text{top cell}\circ\eta]) & \text{if }n\equiv3\mod8\\
                ([\text{bottom cell}\circ\sigma] ,\langle\text{bottom cell},\nu,2\nu\rangle) & \text{if }n\equiv6\mod8
            \end{cases}
        \end{align*}
        For $n\equiv3\mod8$ the top cell, corresponding to the third summand of ${}_2\pi_{2n+9}^s(K)$, splits off. The value of $[H_{n+4}]$ on this summand has therefore no impact on the differential (by arguments similar to the proof of (1) in Corollary~\ref{oddprimdiffer}). Finally, the indeterminacies of the Toda brackets are precisely the $\Z_{16}$-summands.
\end{proof}

\subsection{Results from the Spectral Sequences}
\label{subsect:results-spectral-sequence}
At this point we have a full description of the first four pages of the $p$-primary spectral sequences for $[\Sigma^k\cp^n,G]$. We now discuss how to use these to compute $\coker[\Sigma^k\cp^n,\Omega J]$.

Let $\coker J_p$ denote the homotopy fiber of the map $\beta_p:G/O_{(p)}\to BSO_{(p)}$ defined in \cite[5.16]{Madsen}, and $J_p$ the homotopy fiber of $\Psi^q-\id:BSO_{(p)}\to BSO_{(p)}$, where $q^{p-1}\not\equiv1\mod p^2$ and $\Psi^q$ is the map of classifying spaces induced by the $q$-th Adams operation; see \cite[Chapter 5]{Adams_vect}. It is a result of Sullivan that for each prime $p$ we have splittings
\begin{align}
\label{Sullivansplit1}
    G/O_{(p)}&\simeq BSO_{(p)}\times\coker J_p, \\
\label{Sullivansplit2}
    SG_{(p)}&\simeq J_p\times\coker J_p;
\end{align}
see \cite[Thm.~5.18]{Madsen}. Note that the inclusions
\[
    SG\hookrightarrow G, \quad BSO\hookrightarrow BO,
\]
as well as their localized versions, induce isomorphisms on $\pi_i(\blank)$ for $i>0$. We obtain a splitting of the $p$-primary stable homotopy groups of spheres as 
\begin{equation}
\label{eq:stable-homot-groups-splitting-Sullivan}
    {}_p\pi_i^s\cong\pi_i(SG_{(p)})\cong \pi_i(J_p)\oplus\pi_i(\coker J_p),\quad i>0.
\end{equation}
We need to study the connection between the homotopy groups at the right and the image, resp. cokernel, of the classical $J$-homomorphism. 

A specific solution $\alpha_p:G/O_{(p)}\to BSO_{(p)}$ of the Adams conjecture gives us a diagram of (homotopy) fibrations
\begin{equation}
\label{eq:Adams-conj-diagram-of-fibrations}
    \begin{tikzcd}
        SO_{(p)} \arrow{r}{} \arrow[equal]{d} & J_{p} \arrow{r}{} \arrow[dotted]{d} & BSO_{(p)} \arrow{r}{\Psi^q-\id} \arrow{d}{\alpha_p} & BSO_{(p)} \arrow[equal]{d} \\
        SO_{(p)} \arrow{r}{\Omega J_{(p)}} & SG_{(p)} \arrow{r}{} & G/O_{(p)} \arrow{r}{r_{p}} & BSO_{(p)}
    \end{tikzcd}
\end{equation}
where $r_p$ is the natural inclusion and the dotted map is the first factor of \eqref{Sullivansplit2}; see \cite[Prop 7.7]{brumfiel_transfer} and the proof of \cite[Thm 5.18]{Madsen}. From the induced long exact sequences of homotopy groups we obtain
\begin{equation}
\label{eq:image-J-coker-psi}
    \coker\pi_{i+1}(\Psi^q-\id)\cong\im\pi_i(\Omega J_{(p)}).
\end{equation}
Now, combining \cite[Thm 1.1]{quillen1971adams} with \cite[Thm 1.1, 1.3, 1.5]{adamsIV} we see that $\pi_i(\Psi^q-\id)$ is monomorphic when $i\equiv0\mod4$, and is the zero homomorphism otherwise. From Bott periodicity we get:
\begin{align}
\label{eq:homot-groups-Jp-coker-psi}
    \pi_i(J_p)\cong
    \begin{cases}
        \coker\pi_{i+1}(\Psi^q-\id)\oplus\pi_i(BSO_{(p)}) & \text{if }i\equiv1,2\mod8 \\
        \coker\pi_{i+1}(\Psi^q-\id) & \text{otherwise}
    \end{cases}
\end{align}
From \eqref{eq:image-J-coker-psi} and \eqref{eq:homot-groups-Jp-coker-psi} we have: 
\begin{align}
\label{eq:homot-groups-Jp-image-of-J}
    \pi_i(J_p)\cong
    \begin{cases}
        \im\pi_i(\Omega J_{(p)})\oplus\Z_2 & \text{if }p=2\text{ and }i\equiv1,2\mod8 \\
        \im\pi_i(\Omega J_{(p)}) & \text{otherwise}
    \end{cases}
\end{align}
The additional $\Z_2$-summands are known to be generated by the Adams elements $\mu_{r}\in\pi_r^s$ from \cite[Thm 1.2]{adamsIV}; see also the text below \cite[Conjecture~5]{article}. It follows from \eqref{Sullivansplit2} and \eqref{eq:homot-groups-Jp-image-of-J} that 
\begin{align}
\label{eq:homot-groups-cokerJp-cokernel-of-J}
    \pi_i(\coker J_p)\cong
    \begin{cases}
        \coker(\pi_i(\Omega J_{(p)}))/\{\mu_i\} & \text{if }p=2 \text{ and } i\equiv1,2\mod8 \\
        \coker(\pi_i(\Omega J_{(p)})) & \text{otherwise}
    \end{cases}
\end{align}

Now, from (\ref{Sullivansplit1}) we have: 
\[
    \textstyle[\Sigma^k\cp^n,G/O]_{(p)}\cong [\Sigma^k\cp^n,BSO]_{(p)}\oplus[\Sigma^k\cp^n,\coker J_p].
\]
Recall that in \cite[Lemma 4.1]{kaluzny2026highersmoothsurgerystructure} we obtained (after localization) the splitting 
\[
    \textstyle[\Sigma^k\cp^n,G/O]_{(p)}\cong \ker[\Sigma^k\cp^n,J]_{(p)}\oplus\coker[\Sigma^k\cp^n,\Omega J]_{(p)}.
\]
Note that $\coker[\Sigma^k\cp^n,\Omega J]$ is finite so $\coker[\Sigma^k\cp^n,\Omega J]_{(p)}\cong{}_p\coker[\Sigma^k\cp^n,\Omega J]$. By comparing $\ker[\Sigma^k\cp^n,J]_{(p)}$ with $[\Sigma^k\cp^n,BSO]_{(p)}$ we find that
\begin{align}
\label{eq:coker-Jp-CP}
    \textstyle{}_p\coker[\Sigma^k\cp^n,\Omega J]\cong
    \begin{cases}
        [\Sigma^k\cp^n,\coker J_p]\oplus\Z_2 & \text{if }p=2\text{ and }(k\mod8,n\mod4) \\ 
        & \in\{(0,1),(3,3),(4,3),(7,1)\}\subset\Z_8\times\Z_4 \\ 
        [\Sigma^k\cp^n,\coker J_p] & \text{otherwise}
    \end{cases}
\end{align}
The generators of the additional $\Z_2$-summands are $[\Sigma^kj_{n-1},G/O_{(2)}](\mu_{2n+k})$, where $\mu_{2n+k}$ is the Adams element discussed above viewed as a class in $\pi_{2n+k}(G/O_{(2)})$. 

We can take $Y=\coker J_p$ and obtain a spectral sequence for $[\Sigma^k\cp^n,\coker J_p]$ as described at the beginning of this chapter up to Lemma \ref{sesspec}. From (\ref{Sullivansplit2}) we obtain a map of spectral sequences ${}_pE_*^{*,*}(G)\Rightarrow E_*^{*,*}(\coker J_p)$ and this map is surjective on the first four pages. Thus, we can compute the differentials of $E_r^{k,n}(\coker J_p)$ for $1\leq r\leq4$ from Corollary \ref{oddprimdiffer} and Lemma \ref{2primdiffer}. These results are summarized in \ref{subsec:sprectral-sequence-coker-J2} and \ref{subsec:sprectral-sequence-coker-J3} for $p=2,3$. For higher $p$, all homotopy groups of $\coker J_p$ are zero in the relevant range.
\begin{lema}
\label{lema:coker-J-CP-values}
    The values of $\coker[\Sigma^{k}\cp^n,\Omega J]\subseteq[\Sigma^{k}\cp^n,G/O]$, as well as its generators, are summarized for $1\leq n \leq6$ and $0\leq k\leq6$ in the following table:
    \begin{table}[ht!]
        \centering
        \begin{tabular}{|c|c|c|c|c|} \hline
            \diaghead{\theadfont aaaaaaaa}%
            {$k$}{$n$} & $1$ & $2$ & $3$ & $4$ \\ 
            \hline
            $0$ & \thead{$\Z_2$ \\ $\{\eta^2\}$} & $\cdot$ & \thead{$\Z_2$ \\ $\{\nu^2\}$} & \thead{$\Z_4$ \\ $\{\underline{\nu^2}\}$} \\
            \hline
            $1$ & $\cdot$ & $\cdot$ & $\cdot$ & $\cdot$ \\
            \hline
            $2$ & $\cdot$ & \thead{$\Z_2$ \\ $\{\nu^2\}$} & \thead{$\Z_2^{2}$ \\ $\{\underline{\nu^2},\overline{\nu}\}$} & \thead{$\Z_2\oplus\Z_3$ \\ $\{\underline{\nu^2}_2,\beta_1\}$} \\
            \hline
            $3$ & $\cdot$ & $\cdot$ & \thead{$\Z_2^2$ \\ $\{\eta\overline{\nu},\mu_{9}\}$} & \thead{$\Z_2$ \\ $\{\underline{\eta\overline{\nu}}\}$} \\
            \hline
            $4$ & \thead{$\Z_2$ \\ $\{\nu^2\}$} & \thead{$\Z_4$ \\ $\{\underline{\nu^2}\}$} & \thead{$\Z_4\oplus\Z_2\oplus\Z_3$ \\ $\{\underline{\nu^2}_2,\mu_{10},\beta_1\}$} & \thead{$\Z_4\oplus\Z_3$ \\ $\{\underline{\nu^2}_3,\underline{\beta_1}\}$} \\
            \hline
            $5$ & $\cdot$ & $\cdot$ & $\cdot$ & $\cdot$ \\
            \hline
            $6$ & \thead{$\Z_2$ \\ $\{\overline{\nu}\}$} & \thead{$\Z_3$ \\ $\{\beta_1\}$} & \thead{$\Z_3$ \\ $\{\underline{\beta_1}\}$} & \thead{$\Z_2^2$ \\ $\{\sigma^2,\kappa\}$} \\
            \hline
            \diaghead{\theadfont aaaaaaaa}%
            {$k$}{$n$} & \multicolumn{2}{c|}{$5$} & \multicolumn{2}{c|}{$6$} \\ \hline
            $0$ & \multicolumn{2}{c|}{\thead{$\Z_2^{2}\oplus\Z_3$ \\ $\{\underline{\overline{\nu}},\mu_{10},\beta_1\}$}} & \multicolumn{2}{c|}{\thead{$\Z_2\oplus\Z_3$ \\ $\{\underline{\overline{\nu}}_2,\underline{\beta_1}\}$}} \\ \hline
            $1$ & \multicolumn{2}{c|}{$\cdot$} & \multicolumn{2}{c|}{$\cdot$} \\ \hline
            $2$ & \multicolumn{2}{c|}{\thead{$\Z_2\oplus\Z_3$ \\ $\{\underline{\nu^2}_3,\underline{\beta_1}\}$}} & \multicolumn{2}{c|}{\thead{$\Z_2^{3}$ \\ $\{\underline{\nu^2}_{4},\sigma^2,\kappa\}$}} \\ \hline
            $3$ & \multicolumn{2}{c|}{\thead{$\Z_2\oplus\Z_3$ \\ $\{\underline{\eta\overline{\nu}}_2,\alpha_1\beta_1\}$}} & \multicolumn{2}{c|}{\thead{$\Z_2\oplus\Z_3$ \\ $\{\underline{\eta\overline{\nu}}_3,\underline{\alpha_1\beta_1}\}$}} \\ \hline
            $4$ & \multicolumn{2}{c|}{\thead{$\Z_4\oplus\Z_2^{2}\oplus\Z_3$ \\ $\{\underline{\nu^2}_4,\sigma^2,\kappa,\underline{\beta_1}_2\}$}} & \multicolumn{2}{c|}{\thead{$\Z_4^{2}\oplus\Z_3$ \\ $\{\underline{\nu^2}_5,\underline{\sigma^2},\underline{\beta_1}_3\}$}} \\ \hline
            $5$ & \multicolumn{2}{c|}{\thead{$\Z_2$ \\ $\{\eta\kappa\}$}} & \multicolumn{2}{c|}{\thead{$\Z_4$ \\ $\{\underline{\eta\kappa}\}$}} \\ \hline
            $6$ & \multicolumn{2}{c|}{\thead{$\Z_2^{3}$ \\ $\{\underline{\sigma^2},\underline{\kappa},\eta_4\}$}} & \multicolumn{2}{c|}{\thead{$\Z_2^2\oplus\Z_4$ \\ $\{\underline{\sigma^2}_2,\underline{\kappa}_2,\nu_4\}$}} \\ \hline
        \end{tabular}
        \label{tabnormtorsCP}
    \end{table}
\end{lema}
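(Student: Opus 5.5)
The plan is to reduce the whole table, via \eqref{eq:coker-Jp-CP}, to the computation of $[\Sigma^k\cp^n,\coker J_p]$ for $p=2,3$. For $p\ge5$ all relevant homotopy groups of $\coker J_p$ vanish, since the first $p$-torsion in $\coker J$ is $\beta_1\in\pi^s_{2p(p-1)-2}$, which for $p=5$ lies in degree $38>2n+k\le18$. Wherever $(k \bmod 8, n \bmod 4)\in\{(0,1),(3,3),(4,3),(7,1)\}$ with $p=2$, I add the $\Z_2$ summand generated by the image of $\mu_{2n+k}$. Within our range this produces exactly the entries $\mu_9$ at $(n,k)=(3,3)$, $\mu_{10}$ at $(3,4)$, and $\mu_{10}$ at $(5,0)$.

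For the main part I run the spectral sequence $E_r^{k,n}(\coker J_p)$ with $E_1^{k,n}=\pi_{2n+k+1}(\coker J_p)$. This requires the values of $\pi_i(\coker J_2)$ and $\pi_i(\coker J_3)$ for $i\le 19$, read off from Toda's tables using \eqref{eq:homot-groups-cokerJp-cokernel-of-J}:
\begin{itemize}
\item $p=2$: the classes $\eta^2$ in $\pi_2$, $\nu^2$ in $\pi_6$, $\bar\nu$ and $\eta\bar\nu$ in $\pi_8,\pi_9$ modulo $\mu$, $\sigma^2$ and $\kappa$ in $\pi_{14}$, $\eta\kappa$ in $\pi_{15}$, $\eta_4$ in $\pi_{16}$, and $\nu_4$ in $\pi_{17}$, etc.;
\item $p=3$: the classes $\beta_1\in\pi_{10}$ and $\alpha_1\beta_1\in\pi_{13}$.
\end{itemize}
The differentials on pages $1$ through $4$ are the images of those in Lemma~\ref{2primdiffer} and Corollary~\ref{oddprimdiffer} under the surjective map of spectral sequences. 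Differentials on later pages land in degrees that are either beyond the range or zero in $\coker J_p$, and I will check this degree-by-degree. Concretely, I evaluate composition with $\eta$, $\nu$, $2\nu$, $\sigma$ and the Toda brackets $\langle\cdot,\eta,\nu\rangle$, $\langle\cdot,\nu,\eta\rangle$ on these classes via Toda's relations. Examples are $\eta\nu=0$, $\eta\bar\nu=\nu^3$, $\eta\sigma^2=0$ modulo $J$, and $\langle\nu^2,\eta,\nu\rangle$-type brackets. At $p=3$ I use $\alpha_1\beta_1\neq0$ and $\langle\beta_1,\alpha_1,\alpha_1\rangle$. Lemma~\ref{sesspec} then presents $[\Sigma^k\cp^{n+1},\coker J_p]$ as an extension of $D^{k+1,n}$ by $\pi_{2n+k+2}/D^{k+2,n}$. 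I induct on $n$ starting from $[\Sigma^k\cp^1,-]=\pi_{k+2}$, which yields the orders and the listed generators. Here an underlined symbol denotes a lift from the bottom-cell-to-top-cell filtration, and the subscripts record how many filtration steps the lift passes through.

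The main obstacle will be the extension problems: deciding between $\Z_4$ and $\Z_2^2$, for instance for $\underline{\nu^2}$ at $(n,k)=(4,0)$ and $(2,4)$, for $\underline{\eta\kappa}$ at $(6,5)$, and for $\underline{\sigma^2}$ at $(6,4)$. For these I would compute $2\cdot\underline{x}$ via the Toda bracket identity $2\cdot\underline{x}\in x\circ\langle\eta,2,\ldots\rangle$, which reduces to $x\eta$ composed with the attaching map. More precisely, for an extension $\underline{x}$ over a cell attached by $\eta$, we have $2\underline{x}=x\circ\langle\eta,\ldots\rangle$ restricted to the bottom cell, i.e. an element of $x\cdot\langle 2,\eta,\cdot\rangle\ni x\eta^2$-type terms. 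These are then compared with the known stable self-maps of $\cp^n$ from Kachi and with the James numbers from Lundell to pin down the precise attaching maps. When $n+1\equiv0 \pmod 8$-type cases leave $d_4$ only partly determined (the exponents $j_i$ in Lemma~\ref{2primdiffer}), I expect $\sigma$-multiples to act trivially on $\coker J$ in this range, so the ambiguity should not matter.
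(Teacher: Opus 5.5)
\textbf{Gap: the extension problems.} Your overall architecture is the paper's: reduce via \eqref{eq:coker-Jp-CP} to $[\Sigma^k\cp^n,\coker J_p]$, run the spectral sequence with the differentials of Lemma~\ref{2primdiffer} and Corollary~\ref{oddprimdiffer}, and assemble with Lemma~\ref{sesspec}. However, the step you single out as the main obstacle rests on a wrong identity.

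Take an extension $\underline{x}$ of $x$ (with $2x=0$) over $C=S^m\cup_\eta e^{m+2}$. Then $2\underline{x}$ is in general not of the form $x\circ(\dots)\circ j$. Your heuristic ``$x\eta$ composed with the attaching map'' is the mod-$2$ Moore space formula $2\cdot 1=i\eta j$. For the cone on $\eta$ one has instead $2\cdot 1_C=i\circ\underline{2\iota}+\widetilde{2\iota}\circ j$; this is what the paper takes from \cite[Cor.~2.6]{Kachi2001SomeCG}. Hence
\[
2\underline{x}=x\circ\underline{2\iota}+\underline{x}\circ\widetilde{2\iota}\circ j\in\bigl(\langle x,2\iota,\eta\rangle+\langle x,\eta,2\iota\rangle\bigr)\circ j .
\]
The non-zero values come from indecomposable bracket elements:
\begin{itemize}
\item $\epsilon\in\langle\nu^2,2\iota,\eta\rangle$, which is Toda's definition of $\epsilon$ up to symmetry and equals $\overline{\nu}$ modulo $\eta\sigma$;
\item $\eta_4\in\langle\sigma^2,2\iota,\eta\rangle$, while $\langle\sigma^2,\eta,2\iota\rangle=0$.
\end{itemize}
Your recipe produces $x\eta^2$ instead, and $\nu^2\eta^2=0=\sigma^2\eta^2$. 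So it would declare split exactly the extensions that the table records as $\Z_4$: $\underline{\nu^2}$ at $(n,k)=(4,0)$ and $(2,4)$, the $\Z_4$'s propagated along the row $k=4$, and $\underline{\sigma^2}$ at $(6,4)$.

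Moreover, not every extension problem is of this one-cell type. Consider showing that the images of $\sigma^2$ and $\kappa$ span a direct summand of $[\Sigma^4\cp^5,\coker J_2]$. The paper proves they are not divisible by $2$ by detecting them through $[\Sigma^4H_5,\coker J_2]$, using the non-zero differentials $d_1^{5,4}(\kappa)$ and $d_3^{5,4}(\sigma^2)$ (the latter after lifting to $\Sigma^4\cp^7$). Your proposal has no tool for this.

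\textbf{Input error.} By \eqref{eq:homot-groups-cokerJp-cokernel-of-J}, $\eta^2=\mu_2$ lies in the $J_2$-factor, so $\pi_2(\coker J_2)=0$. The entry at $(n,k)=(1,0)$ is the $\mu_2$-summand of \eqref{eq:coker-Jp-CP}, which your list of $\mu$-contributions omits. If you keep $\eta^2$ in $\pi_2(\coker J_2)$, it survives on the bottom cell, because its $d_1$-image $\eta^3=4\nu$ vanishes in $\coker J_2$. You then get $[\cp^2,\coker J_2]\cong\Z_2$ and a group of order $4$ at $(3,0)$, contradicting the table. A minor slip as well: $\nu_4$ lies in $\pi_{18}$, not $\pi_{17}$; it sits on the top cell of $\Sigma^6\cp^6$.
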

\begin{com}
    We explain the simplified notation used for the generators in this table. First, the notation for elements in $\coker\pi_*(\Omega J)\subseteq\pi_*^s$ is the standard one, see e.g. \cite[p.385]{hatcher2002algebraic}, and an element $x\in\coker\pi_{2n+k}(\Omega J)$ actually denotes the image of $x$ under the collapse map
    \[
        [\Sigma^kj_{n-1},G/O]:\pi_{2n+k}(G/O)\to[\Sigma^k\cp^n,G/O],
    \]
    where we consider $x$ as an element in $\pi_{2n+k}(G/O)$ by an analogue of \eqref{eq:splitting-norm-inv-thom-space} for $S^{2n+k}$. Secondly, underlined elements denote extensions along $\Sigma^kH_{n-1}$; see the paragraph below Lemma~\ref{stuntIman}. Lastly, a subscript next to the underlined element refers to the number of successive extensions taken. 
    
    For instance, the subgroup $\coker[\Sigma^4\cp^3,\Omega J]$ consists of a $\Z_4$-summand generated by an extension of $\underline{\nu^2}\in\ext(\nu^2,\Sigma^4H_1)$ along the map $\Sigma^4H_2$, of a $\Z_2$-summand generated by $[\Sigma^4j_2,G/O](\mu_{10})$, and a $\Z_3$-summand generated by $[\Sigma^4j_2,G/O](\beta_1)$ (or, equivalently, $[\Sigma^4j_2,\coker J_3](\beta_1)$ followed by the inclusion of $\coker J_3$ into $G/O_{(3)}$). 
    
    Note that the choice of particular extensions is not specified. However, this choice will play an important role in the proof of Lemma~\ref{obstrtors} where it is discussed in more detail. 
\end{com}
\begin{proof}
    The values for $k=0$ were already calculated by Brumfiel in \cite{Brumfiel-(1970))}. We present the calculation for the $2$-primary part of the row $k=4$. The other cases follow analogously. 

    From Lemma \ref{sesspec} with $n=1, k=4$ and the spectral sequences in \ref{subsec:sprectral-sequence-coker-J2} and \ref{subsec:sprectral-sequence-coker-J3} we obtain the short exact sequence 
    \[
        \textstyle0\to\Z_2\{\overline{\nu}\}\to[\Sigma^4\cp^2,\coker J_2] \to\pi_6(\coker J_2)\cong\Z_2\{\nu^2\}\to0\to0.
    \]
    This extension problem was solved in the proof of \cite[Lemma 2.5,(iii)]{kasilingam2026diffeomorphismclassificationsmoothstructures}, see the paragraph below sequence (12). The sequence does not split.
    
    We input $[\Sigma^4\cp^2,\coker J_2]\cong\Z_4\{\underline{\nu^2}\}$ into the sequence for $k=4,n=2$:
    \[
        \textstyle0\to0\to[\Sigma^4\cp^3,\coker J_2] \to\Z_4\{\underline{\nu^2}\}\to0\to0.
    \]
    Thus, $[\Sigma^4\cp^3,\coker J_2]\cong\Z_4\{\underline{\nu^2}_2\}$. The sequence for $k=4,n=3$ is also
    \[
        \textstyle0\to0\to[\Sigma^4\cp^4,\coker J_2]\to\Z_4\{\underline{\nu^2}_2\} \to0\to0, 
    \]
    so $[\Sigma^4\cp^4,\coker J_2]\cong\Z_4\{\underline{\nu^2}_3\}$. Next, the sequence for $k=n=4$ is: 
    \[
        \textstyle0\to\Z_2^2\{\sigma^2,\kappa\}\to[\Sigma^4\cp^5,\coker J_2]\to \Z_4\{\underline{\nu^2}_3\}\to0\to0
    \]
    We need to solve this extension problem. The elements $\sigma^2,\kappa$ map in $[\Sigma^4\cp^5,\coker J_2]$ to $[\Sigma^4j_4,\coker J_2](\sigma^2)$ and $[\Sigma^4j_4,\coker J_2](\kappa)$; see Lemma~\ref{sesspec}. Now, the sequence for $k=4,n=5$ is 
    \[
        \textstyle0\to\Z_2\{\eta_4\}\to[\Sigma^4\cp^6,\coker J_2]\to [\Sigma^4\cp^5,\coker J_2]\to\Z_2\{\eta\kappa\}\to0
    \]
    where the map onto the rightmost $\Z_2$-summand is $[\Sigma^4H_5,\coker J_2]$. The composition $[\Sigma^4H_5,\coker J_2]\circ[\Sigma^4j_4,\coker J_2]$ is by definition \eqref{eq:1st-differential-composition} the differential $d_1^{5,4}$. Thus, the image of $\sigma^2$ and $\kappa$ under the composition
    \[
        \Z_2^2\cong\pi_{14}(\coker J_2)\xrightarrow{[\Sigma^4j_4,\coker J_2]}[\Sigma^4\cp^5,\coker J_2]\xrightarrow{[\Sigma^4H_5,\coker J_2]}\pi_{15}(\coker J_2)\cong\Z_2
    \]
    is given by $d_1^{5,4}(\sigma^2)$ and $d_1^{5,4}(\kappa)$. From \ref{subsec:sprectral-sequence-coker-J2} we see that $\Z_2\ni d_1^{5,4}(\kappa)\neq0$, so $[\Sigma^4j_4,\coker J_2](\kappa)$ must generate a $\Z_2$-summand in $[\Sigma^4\cp^5,\coker J_2]$. For the other generator we note that $\Z_2\ni d_3^{5,4}(\sigma^2)\neq0$. Thus, by a similar argument using the sequence for $k=4,n=7$ (and the fact that the generator $[\Sigma^4j_4,\coker J_2](\sigma^2)$ lifts to $[\Sigma^4\cp^7,\coker J_2]$) we conclude that this element also generates a $\Z_2$-summand in $[\Sigma^4\cp^5,\coker J_2]$. In summary, 
    \[
        [\Sigma^4\cp^5,\coker J_2]\cong\Z_4\{\underline{\nu^2}_4\}\oplus\Z_2^2\{[\Sigma^4j_4,\coker J_2](\sigma^2),[\Sigma^4j_4,\coker J_2](\kappa)\}.
    \]

    The sequence for $k=4,n=5$ is then the following:
    \begin{equation}
    \label{eq:spectr-seq-exact-seq-k4-n5}
        \textstyle0\to\Z_2\{\eta_4\}\to[\Sigma^4\cp^6,\coker J_2]\to \Z_4\oplus\Z_2^2\to\Z_2\{\eta\kappa\}\to0.
    \end{equation}
    Once again, we need to solve an extension problem. We use a strategy similar to \cite[Lemma 2.5,(iii)]{kasilingam2026diffeomorphismclassificationsmoothstructures}. We know that the rightmost (non-zero) map is zero on all generators except $[\Sigma^4j_4,\coker J_2](\kappa)$. We need to determine whether $[\Sigma^4j_5,\coker J_2](\eta_4)$ is a multiple of $\underline{\sigma^2}$ or $\underline{\nu^2}_5$. In order to do this, we study the long exact sequence
    \[
        \dots\to\pi_{15}(\coker J_2)\xrightarrow{\eta}\pi_{16}(\coker J_2)\to[\Sigma^4(\cp^6/\cp^4),\coker J_2]\to\pi_{14}(\coker J_2)\xrightarrow{\eta}\dots
    \] 
    induced by the cofibration sequence of $\Sigma^4(\cp^6/\cp^4)\simeq\Sigma^4(S^{10}\cup_{\eta}e^{12})$. We get
    \begin{equation}
    \label{eq:short-exact-seq-stunted-cp6-cp4}
        0\to\Z_2\{\eta_4\}\to[\Sigma^4(\cp^6/\cp^4),\coker J_2]\to\Z_2\{\sigma^2\}\to0.
    \end{equation}
    For an extension $\underline{\sigma^2}$ of $\sigma^2$ (along the attaching map of the top cell) we have:
    \begin{align}
    \label{eq:extension-of-sigma^2}
        2\underline{\sigma^2}&=\underline{\sigma^2}\circ(2\Sigma^{12}\id_{\cp}) \\ \nonumber
        &=\underline{\sigma^2}\circ(\Sigma^{12}i_1\circ\underline{2\id_{S^{14}}}+\widetilde{2\id_{S^{15}}}\circ\Sigma^{12}j_1) \\ \nonumber
        &=\sigma^2\circ\underline{2\id_{S^{14}}} +\underline{\sigma^2}\circ\widetilde{2\id_{S^{15}}}\circ\Sigma^{12}j_1
    \end{align}
    where the second equality comes from \cite[Corollary 2.6,(1)]{Kachi2001SomeCG}. Here 
    \[
        \text{Ext}(\sigma^2,\eta)\circ\text{Coext}(\eta,2\id_{S^{15}})\in\langle\sigma^2,\eta,2\id_{S^{15}}\rangle
    \]
    which is zero; see e.g. \cite[Thm 2.1,ii)]{mukai1969stable}. In addition, from \cite[Proposition 2.7,(2)]{Kachi2001SomeCG} we have
    \[
        \sigma^2\circ\underline{2\id_{S^{14}}} \in\langle\sigma^2,2\id_{S^{14}},\eta\rangle\circ\Sigma^{12}j_1,
    \]
    where the indeterminacy of this Toda bracket is $0$ in $\pi_{16}(\coker J_2)$, and by \cite{hatcher2002algebraic} (the paragraph above the proof of 4.56) it contains $\eta_4$. We substitute in \eqref{eq:extension-of-sigma^2}:
    \begin{align}
        2\underline{\sigma^2}&=\eta_4\circ\Sigma^{12}j_1,
    \end{align}
    where $[\eta_4\circ\Sigma^{12}j_1]=[\Sigma^{12}j_1,\coker J_2](\eta_4)$. Thus, the sequence~\eqref{eq:short-exact-seq-stunted-cp6-cp4} doesn't split and $[\Sigma^4(\cp^6/\cp^4),\coker J_2]\cong\Z_4\{\underline{\sigma^2}\}$. Finally, from the long exact sequence associated to the cofibration $\cp^4\to\cp^6\to\cp^6/\cp^4$ we obtain:
    \[
        \begin{tikzcd}
            0 \arrow{r}{} & {[\Sigma^{4}(\cp^6/\cp^4),\coker J_2]} \arrow{r}{} & {[\Sigma^{4}\cp^6,\coker J_2]} \arrow{r}{} & {[\Sigma^{4}\cp^4,\coker J_2]} \arrow{r}{} & 0 \\
            & \Z_4\{\underline{\sigma^2}\} \arrow[u,phantom,sloped,"\cong"] & & \Z_4\{\underline{\nu^2}_3\} \arrow[u,phantom,sloped,"\cong"] & 
        \end{tikzcd}
    \]
    Together with \eqref{eq:spectr-seq-exact-seq-k4-n5} this implies that $[\Sigma^4\cp^6,\coker J_2]\cong\Z_4^2\{\underline{\sigma^2},\underline{\nu^2}_5\}$.
    
    Finally, from \eqref{eq:coker-Jp-CP} we have
    \begin{align*}
        {}_2\coker[\Sigma^4\cp^n,\Omega J]\cong
        \begin{cases}
            [\Sigma^4\cp^n,\coker J_2] &\text{if }n\not\equiv3\mod4 \\
            [\Sigma^4\cp^n,\coker J_2]\oplus\Z_2 &\text{if }n\equiv3\mod4
        \end{cases}
    \end{align*}
    so for $n=3$ we obtain an additional element $[\Sigma^4j_{3},G/O](\mu_{10})$.
\end{proof}

%% file: obstr.tex
\section{The Surgery Obstruction}
\label{sec:surgery-obstruction}

In this section, we compute the surgery obstruction map 
\[
    \sigma_{n,2l}:\sN_{\partial}(\cp^n\times D^{2l})\to L_{2(n+l)}(\Z)
\]
in cases where $n+l$ is odd. In \cite[Section 3]{kaluzny2026highersmoothsurgerystructure} we claim that the obstruction is given by the Arf invariant of the quadratic form associated with a normal map. We now present a formula for computing this invariant. This formula involves some special characteristic class and can be seen as an analogue of \cite[(5.6)]{kaluzny2026highersmoothsurgerystructure}.

We use the results of Rourke and Sullivan; see \cite{Rourke}, resp. \cite[0.6]{Hambleton}. Let $V_{\cp^n\times D^{2l}}$ denote the total Wu class of $\cp^n\times D^{2l}$. There is a class $$K=k_2+k_6+\dots\in\prod_{i\geq0} \textstyle\widetilde{H}^{4i+2}(G/TOP;\Z_2)$$ such that for any $\TOP$-normal map corresponding to $g:Th(\cp^n\times D^{2l})\to G/TOP$, the associated Arf invariant has value 
\[
    \textstyle\langle V_{\cp^n\times D^{2l}}^2\cup\widetilde{H}^*(g)(K),[\cp^n\times D^{2l},\cp^n\times S^{2l-1}]_{\Z_2}\rangle\in H_0(\cp^n\times D^{2l};\Z_2)\cong\Z_2
\]

The surgery obstruction factors through $G/O\xrightarrow{s}G/TOP$. The $\DIFF$-surgery obstruction of the map $f:Th(\cp^n\times D^{2k})\to G/O$ can thus be computed as the $\TOP$-obstruction of the composition 
\[
    Th(\cp^n\times D^{2k})\xrightarrow{f}G/O\xrightarrow{s}G/TOP
\]
by the formula 
\begin{equation}
\label{eq:Arf-inv-general-formula}
    \textstyle\langle V_{\cp^n\times D^{2l}}^2\cup\widetilde{H}^*(s\circ f)(K),[\cp^n\times D^{2l},\cp^n\times S^{2l-1}]_{\Z_2}\rangle.
\end{equation}

From now on, we denote the smooth class $\widetilde{H}^*(s)(K)$ by $\widetilde{K}$. 

Now, by \cite[Thm 1.3]{brumfiel_PL} (and the paragraph above it) the image of $k_l$ in $\widetilde{H}^*(G;\Z_2)$ under the homomorphism induced by the map $G\to G/TOP$ is zero unless $l=2^j-2$ for $j>1$. This map factors through $G/O$ and we get:
\[
\begin{tikzcd}
      & \widetilde{H}^*(G/O;\Z_2) \arrow{dr}{} & \\
     \widetilde{H}^*(G/TOP;\Z_2) \arrow{rr}{} \arrow{ur}{\widetilde{H}^*(s)} & & \widetilde{H}^*(G;\Z_2) 
\end{tikzcd}
\]
The upper right homomorphism is injective (the dual map on homology is surjective, see e.g. \cite[12.3]{Madsen}), so $\widetilde{k_l}$ is zero in $\widetilde{H}^*(G/O;\Z_2)$ unless $l=2^j-2$ for $j>1$.

The remaining values of $\widetilde{K}$ have been studied by Brumfiel and Madsen; see \cite[\S8]{brumfiel_transfer}. They construct a specific solution of the Adams conjecture localized at $2$, i.e. a splitting 
\begin{equation}
\label{adamsconj}
    \begin{tikzcd}
      & G/O_{(2)} \arrow{dr}{r_2} & \\
     BO \arrow{rr}{\Psi^3-\id} \arrow{ur}{\alpha_2} & & BO_{(2)}
    \end{tikzcd}
\end{equation}
where $\Psi^3$ denotes the Adams operation in the $KO$-ring and $r_2$ is the natural map; see also \eqref{eq:Adams-conj-diagram-of-fibrations}. They then prove that 
\begin{align}
\label{Kvalues}
    \textstyle\widetilde{H}^*(BO;\Z_2)\ni\widetilde{H}^*(\alpha_2)(\widetilde{k_l})=
    \begin{cases}
        \displaystyle\sum_{u<v}\sum_{i=0}^{2^j-2}t_u^it_v^{2^j-2-i} &\text{ if }l=2^j-2 \\
        0 &\text{ otherwise}
    \end{cases}
\end{align}
where we identify $H^*(BO;\Z_2)$ with the ring of symmetric power series in variables $t_i$ in the usual way; see e.g. \cite[\S7]{milnor1974characteristic}. Alternatively, let $s_i\in H^i(BO;\Z_2)$ be the unique non-zero primitive and $\overline{\Delta}$ denote the reduced diagonal in the Hopf algebra $H^*(BO;\Z_2)$. Then $\widetilde{H}^*(\alpha_2)(\widetilde{K})$ is the unique class that satisfies:
\begin{enumerate}
    \item $\overline{\Delta}\widetilde{H}^*(\alpha_2)(\widetilde{K})=\displaystyle\sum_{r\geq2}\sum_{\substack{i+j=2^r-2 \\ i,j\geq1}}s_i\otimes s_j$ \\
    \item $\langle\widetilde{H}^*(\alpha_2)(\widetilde{K}),[\rp^n]_{\Z_2}\rangle=
    \begin{cases}
        1 & \text{ if }n=2^r-2, r\geq2 \\
        0 & \text{ otherwise}
    \end{cases}$
\end{enumerate}
We can translate this in terms of universal Stiefel-Whitney classes $w_i\in H^i(BO;\Z_2)$. We will only need the following statement:
\begin{lema}
\label{KSWclass}
    The class $\widetilde{H}^*(\alpha_2)(\widetilde{k}_{2^r-2})$ is a homogeneous polynomial of degree $2^r-2$ in Stiefel-Whitney classes of the form $$P_{2^r-2}(w_i)=w_{2^r-2}+w_1^{2^r-2}+\sum_{\substack{\alpha=(\alpha_1,\dots,\alpha_{2^r-3})\in\N_0^{2^r-3} \\ \sum_ii\alpha_i=2^r-2 \\ \alpha_1\neq2^r-2}}c_{\alpha}w_1^{\alpha_1}\dots w_{2^r-3}^{\alpha_{2^r-3}},$$where $c_\alpha\in\Z_2$.
\end{lema}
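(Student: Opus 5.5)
The plan is to work entirely with the two characterizing properties of $X:=\widetilde{H}^*(\alpha_2)(\widetilde{k}_{2^r-2})$ stated above, together with the explicit formula \eqref{Kvalues}. Put $m=2^r-2$. Since $H^*(BO;\Z_2)=\Z_2[w_1,w_2,\dots]$ and $X$ has degree $m$, $X$ is automatically a $\Z_2$-combination of monomials $w_1^{\alpha_1}\cdots w_m^{\alpha_m}$ with $\sum i\alpha_i=m$. So the content of Lemma~\ref{KSWclass} is that the coefficients of $w_m$ and of $w_1^m$ are both $1$. I would determine these two coefficients separately.

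\textbf{Coefficient of $w_1^m$.} Pull back along the classifying map $\rp^m\to BO(1)\to BO$ of the canonical line bundle $\gamma$. Since $w(\gamma)=1+x$, every $w_i$ with $i\geq2$ restricts to zero, and $w_1$ restricts to $x$. Hence every monomial except $w_1^m$ dies, and $X$ restricts to $c\,x^m$, where $c$ is the coefficient of $w_1^m$. Property (2) gives $\langle X,[\rp^m]_{\Z_2}\rangle=1$, so $c=1$. This step is routine.

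\textbf{Coefficient of $w_m$.} I would use the duality between $H^*(BO;\Z_2)$ and $H_*(BO;\Z_2)$. The annihilator of the decomposables in degree $m$ is the one-dimensional space of homology primitives, spanned by some $P_m$. Because $w_m$ is indecomposable, $\langle w_m,P_m\rangle=1$. Therefore the coefficient of $w_m$ in $X$ equals $\langle X,P_m\rangle$, and it remains to show this pairing is $1$.

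To compute the pairing, I would first try property (1). The reduced diagonal $\overline{\Delta}X$ determines $X$ up to a primitive, that is, up to a multiple of $s_m$. Since $m$ is even, $s_m=s_{m/2}^2$ is decomposable, so adding $s_m$ does not change the $w_m$-coefficient. Hence the $w_m$-coefficient is already determined by $\overline{\Delta}X=\sum s_i\otimes s_j$. I would then read it off by tracking the tensor term $s_1\otimes s_{m-1}$, using $s_1=w_1$ and Newton's identity $s_{m-1}=w_{m-1}+(\text{decomposables})$ (valid because $m-1$ is odd). This has to be compared with the terms $w_1\otimes w_{m-1}$ arising from $\overline{\Delta}w_m=\sum w_i\otimes w_{m-i}$, and from decomposable monomials such as $w_1w_{m-1}$, whose coproducts also contain $w_1\otimes w_{m-1}$.

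\textbf{Main obstacle.} The hard part is separating the contribution of $w_m$ from that of $w_1w_{m-1}$ in this coproduct comparison. For this I would look at a second tensor coordinate, for instance $w_2\otimes w_{m-2}$ or $s_{m/2}\otimes s_{m/2}$; the latter appears with coefficient $1$ in property (1). This gives a small linear system that should force the $w_m$-coefficient to be $1$.

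As a cross-check I would use \eqref{Kvalues} directly. Restricting to $BO(2)$, with $t_1,t_2$ the only variables, gives
\[
X|_{BO(2)}=\sum_{i} t_1^it_2^{m-i},
\]
to be compared with $w_1=t_1+t_2$ and $w_2=t_1t_2$. This test only detects $w_m$ when $m=2$. For $r\geq3$ I would therefore restrict instead to $BO(1)^m$, where $w_m\mapsto t_1\cdots t_m$, and compare coefficients of the squarefree monomial $t_1\cdots t_m$.
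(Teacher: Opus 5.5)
Your computation of the $w_1^m$-coefficient (restriction along $\rp^\infty=BO(1)\to BO$ plus property (2)) is fine. So is your reduction showing that property (1) alone determines the $w_m$-coefficient, since the ambiguity is a multiple of $s_m=s_{m/2}^2$, which is decomposable. The gap is that you never actually compute the $w_m$-coefficient. ``A small linear system that should force it to be $1$'' is not an argument, and the coordinates you name do not close such a system.

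To see this, write $X=\sum_\lambda c_\lambda w^\lambda$ over partitions $\lambda$ of $m=2^r-2$, and expand $\Delta X$ in the basis of monomials $\otimes$ monomials. For $a+b=m$ with $a,b\geq1$, the coefficient of $w_a\otimes w_b$ is $c_{(m)}+c_{(a,b)}$ if $a\neq b$, and just $c_{(m)}$ if $a=b$. (The square $w_a^2$ contributes $2\equiv0$, and monomials with three or more factors contribute nothing.) On the right-hand side of property (1) the same coefficient is $ab \bmod 2$, by Newton's identity $s_k=k\,w_k+(\text{decomposables})$. So for $r\geq3$:
\begin{itemize}
\item the coordinate $w_1\otimes w_{m-1}$ only gives $c_{(m)}+c_{(m-1,1)}=1$;
\item the coordinate $w_2\otimes w_{m-2}$ gives $c_{(m)}+c_{(m-2,2)}=0$.
\end{itemize}
Each off-diagonal coordinate introduces a new unknown, so no combination of them isolates $c_{(m)}$.

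Your cross-check on $BO(1)^m$ also fails. The coefficient of $t_1\cdots t_m$ in $w_{\lambda_1}\cdots w_{\lambda_k}$ is the multinomial coefficient $m!/(\lambda_1!\cdots\lambda_k!)$, so for instance $w_2w_{m-2}$ contributes $\binom{m}{2}\equiv1$. Since every monomial of \eqref{Kvalues} involves at most two variables, for $r\geq3$ the test only returns the relation $0=c_{(m)}+c_{(m-2,2)}+\cdots$.

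What is missing is the diagonal coordinate, which settles the question on its own. The coefficient of $w_{m/2}\otimes w_{m/2}$ is $c_{(m)}$ on the left and $(m/2)^2\equiv1$ on the right, because $m/2=2^{r-1}-1$ is odd. In your dual language: Frobenius is a Hopf algebra endomorphism of $H_*(BO;\Z_2)$, and the primitives are one-dimensional in each degree, so $P_m=P_{m/2}^2$ and
\[
\langle X,P_m\rangle=\langle\overline{\Delta}X,P_{m/2}\otimes P_{m/2}\rangle=\langle s_{m/2},P_{m/2}\rangle^2=1 .
\]
This is exactly where the form $m=2^r-2$ is used, which your sketch never does.

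Your worry about $w_1w_{m-1}$ is well founded. The paper's own proof argues from the $p_{2^r-3}\otimes p_1$ term, which as written yields only $c_{(m)}+c_{(m-1,1)}=1$. It is the diagonal term above that actually pins down the coefficient of $w_{2^r-2}$.
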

\begin{proof}
    The Stiefel-Whitney classes are the elementary symmetric functions $\sigma_i$ in variables $t_j$ and they generate the ring $H^*(BO;\Z_2)$. By (\ref{Kvalues}), $\widetilde{H}^*(\alpha_2)(\widetilde{k}_{2^r-2})$ contains $t_u^{2^r-2}$ as a summand for all $u\geq1$. That is possible only if it contains $w_1^{2^r-1}$ as a summand. 

    For the other claim, we must first recall that $\overline{\Delta}(\sigma_i)=\sum_{m=1}^{i-1}\sigma_m\otimes\sigma_{i-m}$. The primitive elements in the Hopf algebra of symmetric functions are the power sums $p_n=\sum_{u}t_u^n$. The relation between $\sigma_i$s and $p_n$s is expressed by the Newton identities:
    \[
        p_n=(-1)^{n-1}n\sigma_n+\sum_{i=1}^{n-1}(-1)^{k-1+i}\sigma_{k-i}p_i
    \]
    From the formula for $\overline{\Delta}\widetilde{H}^*(\alpha_2)(\widetilde{K})$, we see that in degree $2^r-2$ it must contain $$p_{2^r-3}\otimes p_1 + p_1\otimes p_{2^r-3}=\sigma_{2^r-3}\otimes\sigma_1 + \sigma_1\otimes\sigma_{2^r-3}+\dots$$ as a summand. But that is possible only if $\widetilde{H}^*(\alpha_2)(\widetilde{K})$ contains $\sigma_{2^r-2}$ as a summand (using the formula for the reduced diagonal of $\sigma_i$).
\end{proof}
\begin{com}
    In low degrees, we can compute explicitly: 
    \[
        \widetilde{H}^*(\alpha_2)(\widetilde{k}_2)=w_1^2+w_2,\quad \widetilde{H}^*(\alpha_2)(\widetilde{k}_6)=w_1^6+w_1^4w_2+w_2^3+w_3w_1^3+w_4w_2+w_6.
    \] 
\end{com}

Now, as presented in \cite[(4.1) and Lemma 4.1]{kaluzny2026highersmoothsurgerystructure}
\begin{align*}
    \textstyle[Th(\cp^n\times D^{2l}),G/O]&\cong\textstyle[\Sigma^{2l}\cp^n,G/O]\oplus\pi_{2l}(G/O) \\    &\cong\ker[\Sigma^{2l}\cp^n,J]\oplus\coker[\Sigma^{2l}\cp^n,\Omega J]\oplus\pi_{2l}(G/O) \\
    [f]&\mapsto([f_{free}],[f_{tors}],[f_{S^{2l}}])
\end{align*}
where $\Z^{t_{n,2l}'}\cong\ker[\Sigma^{2l}\cp^n,J]\subset[\Sigma^{2l}\cp^n,BO]$.
The first isomorphism is induced by a homeomorphism, which we omit from our notation. The second isomorphism is obtained from the fibration sequence $G\xrightarrow{\overline{i}}G/O\xrightarrow{r}BO\xrightarrow{J}BG$. We get from~\eqref{eq:Arf-inv-general-formula}:
\begin{align*}
    \sigma_{n,2l}(f)=\textstyle\langle V_{\cp^n\times D^{2l}}^2\cup [\widetilde{H}^*(p[f_{free}])+\widetilde{H}^*(\overline{i}\circ f_{tors})+\widetilde{H}^*(f_{S^{2l}})](\widetilde{K}),[\cp^n\times D^{2l},\cp^n\times S^{2l-1}]_{\Z_2}\rangle
\end{align*}
where
\[
    p:\ker[\Sigma^{2l}\cp^n,J]\to[\Sigma^{2l}\cp^n,G/O]
\]
is some (so far unspecified) component of the splitting in \cite[Lemma~4.1]{kaluzny2026highersmoothsurgerystructure} and $p([f_{free}])$ here denotes a map $\Sigma^{2k}\cp^n\to G/O$ representing the image of $[f_{free}]$ under this splitting. Since $\sigma_{n,2l}$ is a homomorphism to $\Z_2$, we can localize at $2$. 

Now, we construct a specific localized splitting $p_{(2)}$. From (\ref{adamsconj}) we have:
\[
\begin{tikzcd}
    & \left[\Sigma^{2l}\cp^n,G/O_{(2)}\right] \arrow{dr}{[\Sigma^{2l}\cp^n,r_2]} & \\
    \left[\Sigma^{2l}\cp^n,BO\right] \arrow{rr}{[\Sigma^{2l}\cp^n,\Psi^3-\id]} \arrow{ur}{[\Sigma^{2l}\cp^n,\alpha_2]} & & \left[\Sigma^{2l}\cp^n,BO_{(2)}\right]
\end{tikzcd}
\]
We can extend $[\Sigma^{2l}\cp^n,\alpha_2]$ to $[\Sigma^{2l}\cp^n,\alpha_{(2)}]:[\Sigma^{2l}\cp^n,BO_{(2)}]\to[\Sigma^{2l}\cp^n,G/O_{(2)}]$ and similarly for $[\Sigma^{2l}\cp^n,\Psi^3-\id]_{(2)}$. Note that $\im[\Sigma^{2l}\cp^n,r_2]=\ker[\Sigma^{2l}\cp^n,J]_{(2)}$. 
\begin{lema}
    The map $[\Sigma^{2l}\cp^n,\Psi^3-\id]_{(2)}$ surjects onto $\im[\Sigma^{2l}\cp^n,r_2]$. In addition, it is an isomorphism unless $(l,n)\equiv(0,1)$ or $(4,3)$ $(\mod 8,\mod 4)$ when $\ker[\Sigma^{2l}\cp^n,\Psi^3-\id]_{(2)}\cong\Z_2$. 
\end{lema}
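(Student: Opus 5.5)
The plan is to reduce the statement to a computation in $2$-local complex $K$-theory of $\Sigma^{2l}\cp^n$. Surjectivity is formal. By \eqref{adamsconj} we have $r_2\circ\alpha_2=\Psi^3-\id$, so
\[
\im[\Sigma^{2l}\cp^n,\Psi^3-\id]_{(2)}\subseteq\im[\Sigma^{2l}\cp^n,r_2].
\]
For the reverse inclusion, use the fibration $G/O_{(2)}\xrightarrow{r_2}BO_{(2)}\xrightarrow{J}BG_{(2)}$ together with \eqref{eq:Adams-conj-diagram-of-fibrations}. Identify $BG_{(2)}$ on $2$-local $J$-theory: there $J$ restricts to the map $BSO_{(2)}\to BJ_2$, whose fibre sequence is $BSO\xrightarrow{\Psi^3-\id}BSO\to BJ_2$. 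An element of $\ker[\Sigma^{2l}\cp^n,J]_{(2)}$ therefore maps to zero in $[\Sigma^{2l}\cp^n,BJ_2]$, and since $\Psi^3-\id$ followed by $BSO\to BJ_2$ is a fibre sequence, it lifts through $\Psi^3-\id$. The splitting \eqref{Sullivansplit1} guarantees that the $\coker J_2$ part of $BG$ contributes nothing to this kernel. This gives $\im[\Sigma^{2l}\cp^n,r_2]=\im[\Sigma^{2l}\cp^n,\Psi^3-\id]_{(2)}$.

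For the kernel, note that $[\Sigma^{2l}\cp^n,BO]_{(2)}=\widetilde{KO}^{-2l}(\cp^n)_{(2)}$ is a finitely generated $\Z_{(2)}$-module. Its rank equals the rank of $\ker[\Sigma^{2l}\cp^n,J]_{(2)}\cong\Z_{(2)}^{t'_{n,2l}}$ from Part I, because $\Psi^3-\id$ is rationally injective: on the rational Pontryagin character components of degree $2m>0$, $\Psi^3$ acts by $3^{m}\neq1$. Hence $\ker[\Sigma^{2l}\cp^n,\Psi^3-\id]_{(2)}$ is exactly the $\Psi^3$-fixed part of the torsion subgroup of $\widetilde{KO}^{-2l}(\cp^n)_{(2)}$. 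I would compute this torsion using Fujii's computation of $\widetilde{KO}^*(\cp^n)$, or, equivalently, the Atiyah--Hirzebruch / cell-by-cell sequence from the cofibrations $\cp^{n-1}\to\cp^n\to S^{2n}$, analogous to Lemma~\ref{sesspec} with $Y=BO$. That torsion is $0$ or $\Z_2$, and it arises only when $\pi_{2n+2l}(BO)$ or $\pi_{2n+2l+1}(BO)$ contributes a $\Z_2$ on the top cell that is not killed by $\eta$ via $\Sigma^{2l}H_{n-1}$. According to Lemma~\ref{2primdiffer}, this happens for $n$ odd with $2n+2l\equiv1,2\pmod 8$. Since $2l$ is even, this means $2n+2l\equiv2\pmod 8$, i.e.\ $n+l\equiv1\pmod4$. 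Checking this against \eqref{eq:coker-Jp-CP} singles out $(2l\bmod 8,n\bmod 4)\in\{(0,1),(4,3)\}$, i.e.\ the listed cases.

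On such a $\Z_2$ generated by $\mu$-type classes (the image of the Adams element $\mu_{2n+2l}$ on the top cell), $\Psi^3$ acts trivially, since $\Psi^3$ is the identity on $\pi_{8j+1},\pi_{8j+2}$ of $BO$. So $\Psi^3-\id$ kills it, the kernel is $\Z_2$, and it is $0$ otherwise. This is consistent with the extra $\Z_2$-summands in \eqref{eq:coker-Jp-CP}, generated by $[\Sigma^kj_{n-1},G/O_{(2)}](\mu_{2n+k})$. In fact I would derive the kernel directly from \eqref{eq:coker-Jp-CP} and \eqref{eq:homot-groups-Jp-image-of-J}. Applying $[\Sigma^{2l}\cp^n,-]$ to the top row of \eqref{eq:Adams-conj-diagram-of-fibrations}, the kernel of $\Psi^3-\id$ is the image of $[\Sigma^{2l}\cp^n,\Omega BSO]\to[\Sigma^{2l}\cp^n,J_2]$ modulo image-of-$J$. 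Comparing with the two splittings of $[\Sigma^{2l}\cp^n,G/O]_{(2)}$, this is exactly the discrepancy term between $[\Sigma^k\cp^n,\coker J_2]$ and ${}_2\coker[\Sigma^k\cp^n,\Omega J]$ in \eqref{eq:coker-Jp-CP}.

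The main obstacle is making the torsion computation of $\widetilde{KO}^{-2l}(\cp^n)_{(2)}$ rigorous, together with the claim that $\Psi^3$ fixes the torsion class. In particular, one has to verify that the $\Z_2$ from the top cell survives the extension problem, i.e.\ is not absorbed into a free summand via a nontrivial extension. I would first try to settle this by citing Fujii's explicit $KO^*(\cp^n)$ tables, and fall back on the $\eta$-attaching-map argument of Lemma~\ref{2primdiffer}(1) if needed.
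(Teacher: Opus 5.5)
\textbf{Gap: the reverse inclusion in the surjectivity step.} Write $X=\Sigma^{2l}\cp^n$, and use lower stars for induced maps on $[X,-]$. The step that fails is $\im r_{2*}\subseteq\im(\Psi^3-\id)$. Your argument needs two things:
\begin{enumerate}
\item a factorization of $J\colon BSO_{(2)}\to BSG_{(2)}$ through the map $BSO_{(2)}\to BJ_2$ whose fibre is $\Psi^3-\id$;
\item injectivity of $[X,BJ_2]\to[X,BSG_{(2)}]$.
\end{enumerate}
Neither is available. Diagram \eqref{eq:Adams-conj-diagram-of-fibrations} only compares the fibres $J_2\to SG_{(2)}$. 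Continuing it one step to the right means delooping the solution of the Adams conjecture: you would need $J\circ(\Psi^3-\id)\simeq *$ as loop maps, in fact an infinite loop version. In spaces, a fibre sequence gives no factorization property for maps out of its total space. For the second point you would need a delooping of the splitting \eqref{Sullivansplit2}. The splitting \eqref{Sullivansplit1} that you cite is a space-level splitting of $G/O_{(2)}$ and says nothing about $BG$. Delooping Sullivan's splitting at $p=2$ is a delicate matter that nothing in the paper provides.

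\textbf{Repair and comparison.} Use \eqref{Sullivansplit1} where it applies; this is the paper's proof. It gives $[X,G/O_{(2)}]\cong\alpha_{2*}[X,BSO_{(2)}]\oplus(\text{finite})$, so $\alpha_{2*}$ is split injective and hence an isomorphism on free parts. By \cite[Lemma~4.1]{kaluzny2026highersmoothsurgerystructure}, $\im r_{2*}=\ker[X,J]_{(2)}\cong\Z_{(2)}^{t'_{n,2l}}$ is torsion-free. So $r_{2*}$ kills the finite part, and $\im r_{2*}=r_{2*}\alpha_{2*}[X,BSO]=\im(\Psi^3-\id)$.

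Your kernel computation is correct and parallels the paper's. You confine the kernel to torsion by rational injectivity of $\Psi^3-\id$; the paper does so via injectivity of $\alpha_{2*}$ and finiteness of $\ker r_{2*}$. That torsion is $\Z_2$ exactly in the two listed cases, by Part~I (Lemma~A.1) or Fujii. Your top-cell argument that $\Psi^3$ fixes this $\Z_2$ is fine. It becomes unnecessary once surjectivity is established, since $\im(\Psi^3-\id)$ then lies in the torsion-free group $\im r_{2*}$.
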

\begin{proof}
    We first prove the statement for $(l,n)\not\equiv(0,1),(4,3)$ $(\mod8,\mod4)$. In these cases, the (localized) diagram above is of the form:
    \[
    \begin{tikzcd}
        & \Z_{(2)}^{t_{n,2l}'}\oplus2\text{-torsion} \arrow{dr}{[\Sigma^{2l}\cp^n,r_2]} & \\
        \Z_{(2)}^{t_{n,2l}'} \arrow{rr}{[\Sigma^{2l}\cp^n,\Psi^3-\id]_{(2)}} \arrow{ur}{[\Sigma^{2l}\cp^n,\alpha_{(2)}]} & & \Z_{(2)}^{t_{n,2l}'}
    \end{tikzcd}
    \]
    as we see e.g. from \cite[Lemma 4.1, A.1]{kaluzny2026highersmoothsurgerystructure}. Now, $[\Sigma^{2l}\cp^n,\alpha_{(2)}]$ is a split injection; see \cite[Section 5.C]{Madsen}. This implies that it is an isomorphism on the free parts. Thus: 
    \[
        \im[\Sigma^{2l}\cp^n,\Psi^3-\id]_{(2)}=\im([\Sigma^{2l}\cp^n,r_2]\circ[\Sigma^{2l}\cp^n,\alpha_{(2)}])=\im[\Sigma^{2l}\cp^n,r_2],
    \]
    proving the first statement. Furthermore, $\ker[\Sigma^{2l}\cp^n,r_2]$ is finite by \cite[Lemma 4.1]{kaluzny2026highersmoothsurgerystructure}, proving the second statement.

    In cases where $(l,n)\equiv(0,1),(4,3)$ $(\mod8,\mod4)$, the diagram is of the following form: 
    \begin{equation}
    \label{eq:diagram-splitting-with-Z2}
        \begin{tikzcd}
            & \Z_{(2)}^{t_{n,2l}'}\oplus2\text{-torsion} \arrow{dr}{[\Sigma^{2l}\cp^n,r_2]} & \\
            \Z_{(2)}^{t_{n,2l}'}\oplus\Z_2 \arrow{rr}{[\Sigma^{2l}\cp^n,\Psi^3-\id]_{(2)}} \arrow{ur}{[\Sigma^{2l}\cp^n,\alpha_{(2)}]} & & \Z_{(2)}^{t_{n,2l}'}\oplus\Z_2
        \end{tikzcd}
    \end{equation}
    Again, from \cite[Lemma 4.1]{kaluzny2026highersmoothsurgerystructure} it follows that $\im[\Sigma^{2l}\cp^n,r_2]\cong\Z_{(2)}^{t_{n,2l}'}$. The first statement now follows in the same way as above. Since $\ker[\Sigma^{2l}\cp^n,r_{2}]$ is finite, the second statement follows as well. 
\end{proof}
Now, we define the splitting $p_{(2)}$. In cases where $[\Sigma^{2l}\cp^n,\Psi^3-\id]_{(2)}$ is an isomorphism onto $\im[\Sigma^{2l}\cp^n,r_2]=\ker[\Sigma^{2l}\cp^n,J]_{(2)}$, we define 
\[
    p_{(2)}=[\Sigma^{2l}\cp^n,\alpha_{(2)}]\circ[\Sigma^{2l}\cp^n,\Psi^3-\id]_{(2)}^{-1}.
\]
In the remaining cases, we first factor the leftmost group by $\ker[\Sigma^{2l}\cp^n,\Psi^3-\id]_{(2)}$. Since $\im[\Sigma^{2l}\cp^n,r_2]\cong\Z_{(2)}^{t_{n,2l}'}$ by \cite[Lemma 4.1]{kaluzny2026highersmoothsurgerystructure}, we obtain a diagram
\[
    \begin{tikzcd}
        & \Z_{(2)}^{t_{n,2l}'}\oplus2\text{-torsion} \arrow{dr}{[\Sigma^{2l}\cp^n,r_2]} & \\
        \Z_{(2)}^{t_{n,2l}'} \arrow{rr}{[\Sigma^{2l}\cp^n,\Psi^3-\id]_{(2)}} \arrow{ur}{[\Sigma^{2l}\cp^n,\alpha_{(2)}]} & & \Z_{(2)}^{t_{n,2l}'}
    \end{tikzcd}
\]
where, by abuse of notation, $[\Sigma^{2l}\cp^n,\alpha_{(2)}]$ and $[\Sigma^{2l}\cp^n,\Psi^3-\id]_{(2)}$ are obtained from corresponding maps in \eqref{eq:diagram-splitting-with-Z2}, and $[\Sigma^{2l}\cp^n,\Psi^3-\id]_{(2)}$ is an isomorphism onto $\ker[\Sigma^{2l}\cp^n,J]_{(2)}$. 

We get: 
\begin{align*}
    \widetilde{H}^*(p_{(2)}[f_{free}])(\widetilde{K})&=\widetilde{H}^*(([\Sigma^{2l}\cp^n,\alpha_{(2)}]\circ[\Sigma^{2l}\cp^n,\Psi^3-\id]_{(2)}^{-1})[f_{free}])(\widetilde{K}) \\
    &=\widetilde{H}^*([\Sigma^{2l}\cp^n,\Psi^3-\id]_{(2)}^{-1}[ f_{free}])\widetilde{H}^*([\Sigma^{2l}\cp^n,\alpha_{(2)}])(\widetilde{K})
\end{align*}
By (\ref{Kvalues}), the discussion above it and Lemma \ref{KSWclass} we get for such $f$ that $[\Sigma^{2l}\cp^n,\Psi^3-\id]_{(2)}^{-1}[f_{free}]$ is in $[\Sigma^{2l}\cp^n,BO]$ (non-localized) the following:
\begin{align*}
    \sigma_{n,2l}(f)&=\textstyle\langle V_{\cp^n\times D^{2l}}^2\cup\sum_{r>1}P_{2^r-2}(w_i([\Sigma^{2l}\cp^n,\Psi^3-\id]_{(2)}^{-1}[f_{free}]),[\cp^n\times D^{2l},\cp^n\times S^{2l-1}]_{\Z_2}\rangle \\
    &+ \textstyle\langle V_{\cp^n\times D^{2l}}^2\cup\widetilde{H}^*(f_{tors})\widetilde{H}^*(\overline{i})(\sum_{r>1}\widetilde{k}_{2^r-2}),[\cp^n\times D^{2l},\cp^n\times S^{2l-1}]_{\Z_2}\rangle \\
    &+ \textstyle\langle V_{\cp^n\times D^{2l}}^2\cup\widetilde{H}^*(f_{S^{2l}})(\sum_{r>1}\widetilde{k}_{2^r-2}),[\cp^n\times D^{2l},\cp^n\times S^{2l-1}]_{\Z_2}\rangle.
\end{align*}
The bundles $[\Sigma^{2l}\cp^n,\Psi^3-\id]_{(2)}^{-1}[f_{free}]$ are over a suspension space, so all Stiefel-Whitney numbers in degree $2^r-2$ are zero except possibly $w_{2^r-2}$ (cup products are zero in cohomology). In addition, $\widetilde{H}^*(S^{2l})$ is non-zero only in degree $2l$, so
\begin{align}
\label{eq:final-equation-Arf-invariant}
    \sigma_{n,2l}(f)&=\textstyle\langle V_{\cp^n\times D^{2l}}^2\cup\sum_{r>1}w_{2^r-2}([\Sigma^{2l}\cp^n,\Psi^3-\id]_{(2)}^{-1}[f_{free}]),[\cp^n\times D^{2l},\cp^n\times S^{2l-1}]_{\Z_2}\rangle \\ \nonumber
    &+ \textstyle\langle V_{\cp^n\times D^{2l}}^2\cup\widetilde{H}^*(f_{tors})\widetilde{H}^*(\overline{i})(\sum_{r>1}\widetilde{k}_{2^r-2}),[\cp^n\times D^{2l},\cp^n\times S^{2l-1}]_{\Z_2}\rangle \\ \nonumber
    &+ \textstyle\langle V_{\cp^n\times D^{2l}}^2\cup\widetilde{H}^*(f_{S^{2l}})(\widetilde{k}_{2l}),[\cp^n\times D^{2l},\cp^n\times S^{2l-1}]_{\Z_2}\rangle
\end{align}

Let the isomorphism $\ker[\Sigma^{2l}\cp^n,J]\cong\Z^{t_{n,2l}'}$ be given by a choice of generators $\xi_1,\dots,\xi_{t_{n,2l}'}$ as in \cite[Remark 4.13, Table 3]{kaluzny2026highersmoothsurgerystructure}. The Adams operations, the preimage $[\Sigma^{2l}\cp^n,\Psi^3-\id]_{(2)}^{-1}$ of these generators, and the Stiefel-Whitney classes of generators of $[\Sigma^{2l}\cp^n,BO]$ are summarized in Table~\ref{tab:Adams-id-operations},\ref{tab:inverse-Adams-id-generators},\ref{tab:S-W-classes-generators}. These tables are computed in the following way.  

In order to obtain Table~\ref{tab:Adams-id-operations}, we need to compute the $3$rd Adams operation on the generators of $[\Sigma^{2l}\cp^n,BO]$; see \cite[Lemma A.1]{kaluzny2026highersmoothsurgerystructure}. We will present the computation for the generator $\mu_1\in[\Sigma^2\cp^4,BO]\cong\Z^2$. First, we compute the Pontryagin character of $\Psi^3\mu_1$ using \cite[Theorem 5.1,(vi)]{Adams_vect} and \cite[Lemma 4.8]{kaluzny2026highersmoothsurgerystructure}, and obtain
\[
    ph_q(\Psi^3\mu_1)=ch_{2q}(\Psi^3c(\mu_1))=3^{2q}ch_{2q}(c(\mu_1))=3^{2q}ph_q(\mu_1)
\]
We express this Pontryagin character as a linear combination of Pontryagin characters of the generators of $[\Sigma^{2l}\cp^n,BO]$. Since the Pontryagin character is a monomorphism in all cases in question, except when $l=2$ and $n=3$, this gives the precise linear combination of generators expressing $\Psi^3\mu_1$. When $l=2$ and $n=3$, the result follows from the case where $l=2$ and $n=5$ by naturality.  
Now, recall from \cite[Table 3]{kaluzny2026highersmoothsurgerystructure} that $\xi_1,\dots,\xi_{t_{n,2l}'}$ are linear combinations of generators of $[\Sigma^{2l}\cp^n,BO]$, which can be considered as elements in $[\Sigma^{2l}\cp^n,BO]_{(2)}$. We find the inverses of these elements under the matrices in Table~\ref{tab:Adams-id-operations} and obtain Table~\ref{tab:inverse-Adams-id-generators}. Finally, Table~\ref{tab:S-W-classes-generators} is obtained in the following way. The generators $\mu_i\mu_0^j$ are in the image of the realification map 
\[
    R:\widetilde{K}^0(\Sigma^{2l}\cp^n)\to \widetilde{KO}^0(\Sigma^{2l}\cp^n),
\]
their preimage can be determined using the Pontryagin character. We know that for (virtual) bundles in $\im R$, the total Stiefel-Whitney class is a$\mod2$ reduction of the total Chern class; see \cite[Problem~14-B]{milnor1974characteristic}. Thus, the total Stiefel-Whitney class can be computed from the Chern character of the generator's preimage under $R$. For instance, by \cite[Thm.2]{Fujii} we have $\mu_1=R(g\cdot\mu)$, and 
\[
    ch(g\cdot\mu)=ch(g\cdot(H-1))=y\times(e^x-1)
\]
so the total Chern class of $\mu_1\in[\Sigma^2\cp^6,BO]$ is
\[
    C(\mu_1)=1+(-x+x^2-x^3+x^4-x^5+x^6)\times y.
\]
The total Stiefel-Whitney class is the$\mod2$ reduction of this class.

Now, we compute the first summand of~\eqref{eq:final-equation-Arf-invariant}.
\begin{lema}
\label{obstrfreeodd}
    Let $[f_{free}]$ be given in coordinates by $x_1\xi_1+\dots+x_{t_{n,2l}'}\xi_{t_{n,2l}'}$. We have for $1\leq n\leq6$, $2\leq2l\leq6$:
    \begin{align*}
        \sigma_{n,2l}(p_{(2)}[f_{free}])=
        \begin{cases}
            x_1 &\text{for }l=1\\
            0 &\text{otherwise}
        \end{cases}
    \end{align*}
\end{lema}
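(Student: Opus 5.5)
The plan is to evaluate only the first summand of \eqref{eq:final-equation-Arf-invariant}. By the discussion preceding the lemma, this is the entire contribution of $p_{(2)}[f_{free}]$, since $f_{tors}=0$ and $f_{S^{2l}}=0$ for such an element. Write $\zeta=\sum_i x_i\,[\Sigma^{2l}\cp^n,\Psi^3-\id]_{(2)}^{-1}(\xi_i)\in[\Sigma^{2l}\cp^n,BO]$. By linearity of the pairing, and because on a suspension $w_{2^r-2}$ is additive (all cup products vanish), we get
\[
    \sigma_{n,2l}(p_{(2)}[f_{free}])=\sum_i x_i\Big\langle V^2_{\cp^n\times D^{2l}}\cup\sum_{r>1}w_{2^r-2}(\zeta_i),[\cp^n\times D^{2l},\cp^n\times S^{2l-1}]_{\Z_2}\Big\rangle,
\]
where $\zeta_i=[\Sigma^{2l}\cp^n,\Psi^3-\id]_{(2)}^{-1}(\xi_i)$. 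So it suffices to compute, for each generator $\xi_i$, one $\Z_2$-number.

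\textbf{Step 1: the Wu class factor.} The class $V_{\cp^n\times D^{2l}}$ is pulled back from $\cp^n$. Its square is $\mathrm{Sq}(V)$-related, and since $V^2=\sum_j v_j^2$ (squaring is additive mod $2$) we can compute it from the Wu classes of $\cp^n$, which come from $w(\cp^n)=(1+x)^{n+1}$ mod $2$. In relative cohomology $H^*(\cp^n\times(D^{2l},S^{2l-1}))\cong H^*(\cp^n)\otimes y$, the pairing picks out the coefficient of $x^{n}y$. Hence only the component of $w_{2^r-2}(\zeta_i)$ of the form $x^{a}y$ with $2a+2l=2^r-2$ matters, paired with the coefficient of $x^{n-a}$ in $V^2$. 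Because $V^2$ lives in degrees divisible by $4$, we need $n-a$ even. With $n\le6$ and $2l\le6$, the relevant $r$ are $r=2,3,4$ (degrees $2,6,14$), and this leaves a short finite list of $(n,l,a)$.

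\textbf{Step 2: the Stiefel--Whitney classes.} For each relevant $\xi_i$, I would read $\zeta_i$ from Table~\ref{tab:inverse-Adams-id-generators}, expressed $2$-locally as a combination of the generators $\mu_i\mu_0^j$. I would then take their Stiefel--Whitney classes from Table~\ref{tab:S-W-classes-generators}, obtained as mod $2$ reductions of Chern classes of the complex preimages under $R$. The mod $2$ reduction of the $2$-local coefficients must be handled carefully here: odd denominators are harmless, but we only need the coefficients mod $2$.

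\textbf{Step 3: extracting the answer.} For $l=1$, the term $r=2$ gives $w_2(\zeta_i)$ paired against the top class, so $a=0$ and the factor is $v_{\text{top}}^2$ for degree $2n$. More conveniently, the terms with $2a+2=2^r-2$ collect. I expect this to produce exactly the coefficient $1$ for $\xi_1$ (the generator detected by $w_2$, i.e. the $\mu_1$-type element with $C=1+(\pm x+\dots)y$) and $0$ for the others, using the chosen normalization of generators from \cite[Remark 4.13]{kaluzny2026highersmoothsurgerystructure}. For $l=2,3$, the degree of $w_{2^r-2}$ restricted to $x^ay$-components must equal $2a+2l$. I expect each surviving number to vanish either because the relevant Chern class coefficient is even after applying $(\Psi^3-\id)^{-1}$ (which introduces factors like $1/(3^{q}-1)$ with controlled $2$-adic valuation, times $2$-divisible generators) or because the matching $V^2$ coefficient is zero.

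The main obstacle is Step 3 in the cases $l=2,3$. There one must verify that the $2$-adic valuations coming from inverting $\Psi^3-\id$, whose eigenvalues are $3^{q}-1$ on $ph_q$, combine with the Chern classes of the generators to give even coefficients in $w_6$ and $w_{14}$. I would first try to organize this through the Pontryagin character: since $\zeta_i$ has $ph_q(\zeta_i)=ph_q(\xi_i)/(3^{2q}-1)$, the parity of the top Chern coefficient can be read off from the $2$-adic valuation of $3^{2q}-1$ against the valuation of $ph_q(\xi_i)$. Failing that, I would fall back on the explicit case-by-case entries of the tables.
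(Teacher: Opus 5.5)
Your reduction matches the paper's: only the first summand of \eqref{eq:final-equation-Arf-invariant} matters, Stiefel--Whitney classes are additive over the suspension, and everything is read off Tables~\ref{tab:inverse-Adams-id-generators}, \ref{tab:S-W-classes-generators} and \ref{tab:Wu-classes}. The gap is Step~3. It is the whole content of the lemma, and you only state expectations. The mechanisms you predict are also wrong.

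\textbf{The case $l=1$.} There is no $a=0$ term. Since $\zeta_i\in\widetilde{KO}(\Sigma^2\cp^n)$ and $\widetilde H^*(\Sigma^2\cp^n;\Z_2)$ is spanned by the $x^a\times y$ with $a\geq1$, we get $w_2(\zeta_i)=0$. The class $1\times y$ belongs to the $\pi_2(G/O)$-summand of Lemma~\ref{lema:Arf-inv-on-sphere-part}. What actually happens is as follows. By Table~\ref{tab:inverse-Adams-id-generators}, $\zeta_1$ is $3\mu_1$ plus a combination of the $\mu_1\mu_0^j$ with $j\geq1$. For $i\geq2$, $\zeta_i$ lies in the span of the $\mu_1\mu_0^j$. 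Since $W(\mu_1\mu_0^j)=1$ and $W(\mu_1)=1+(x+\dots+x^n)\times y$, the number equals
\[
x_1\langle V^2\cup(x^2+x^6)\times y,[\cp^n\times D^2,\cp^n\times S^1]_{\Z_2}\rangle .
\]
The $w_6$-term $x^2\times y$ meets the $x^{n-2}$-coefficient of $V^2$, which is $1$ for $n=2,4$ and $0$ for $n=6$. The $w_{14}$-term $x^6\times y$ exists only for $n=6$, where it meets the constant term $1$. So the value $x_1$ comes from different terms for different $n$, not from $w_2$ paired with a top Wu square.

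\textbf{The cases $l=2,3$.} The vanishing is not a $2$-adic effect of inverting $\Psi^3-\id$, and it is not caused by zeros of $V^2$. The relevant components are $x\times y^2$ and $x^5\times y^2$ for $l=2$, and $x^4\times y^3$ for $l=3$. For $(n,l)=(5,2),(4,3),(6,3)$ the matching coefficients of $V^2$ equal $1$, so the Wu factor does not kill them. The vanishing is a property of the generators of $[\Sigma^{2l}\cp^n,BO]$ themselves:
\begin{itemize}
\item $W(\mu_2)=1+(x^2+x^4+x^6)\times y^2$ has components only in degrees $\equiv0\pmod 4$, while every $w_{2^r-2}$ sits in degree $\equiv2\pmod 4$;
\item $W(\mu_2\mu_0^j)=1$ for $j\geq1$, and $W(\mu_3\mu_0^j)=1$ for all $j\geq0$.
\end{itemize}
Hence the first summand vanishes on all of $[\Sigma^{2l}\cp^n,BO]$, whatever the coefficients in Table~\ref{tab:inverse-Adams-id-generators}.

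\textbf{Why the Pontryagin-character route fails.} For $\zeta=R(\omega)$ over an even suspension,
\[
ph(\zeta)=ch(\omega)+ch(\bar\omega)=2\sum_{k\ \mathrm{even}}ch_k(\omega).
\]
But $w_{2^r-2}(\zeta)\equiv c_{2^{r-1}-1}(\omega)\bmod 2$ has odd index. So the parity you want is invisible degreewise in $ph$. You must instead pass through explicit complex preimages under $R$ and use $w(R\omega)\equiv c(\omega)\bmod 2$, as in the construction of Table~\ref{tab:S-W-classes-generators} (e.g.\ $\mu_1=R(g\cdot\mu)$). Carrying out that table lookup, as above, is exactly what your proposal leaves undone.
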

\begin{proof}
    We describe the proof for the case where $l=1$ and $n=4$, the other cases follow analogously. From Table \eqref{tab:inverse-Adams-id-generators} we see that 
    \[
        [\Sigma^2\cp^4,\Psi^3-\id]_{(2)}^{-1}[f_{free}]=x_1(3\mu_1+2\mu_1\mu_0)+x_2(3\mu_1\mu_0).
    \]
    The Stiefel-Whitney classes are additive, since we are over a suspension space. We obtain:
    \[
        W([\Sigma^2\cp^4,\Psi^3-\id]_{(2)}^{-1}[f_{free}])=x_1(1+(x+x^2+x^3+x^4)\times y),
    \]
    where $x\in H^2(\cp^n)$ is the generator. Finally, from \eqref{eq:final-equation-Arf-invariant} and Table \ref{tab:Wu-classes} we get
    \begin{align*}
        \sigma_{4,2}(p_{(2)}[f_{free}])&=\textstyle\langle V_{\cp^4\times D^{2}}^2\cup\sum_{r>1}w_{2^r-2}([\Sigma^{2}\cp^4,\Psi^3-\id]_{(2)}^{-1}[f_{free}]),[\cp^4\times D^{2},\cp^4\times S^{1}]_{\Z_2}\rangle \\
        &=\textstyle\langle(1+x^2+x^4)\times1\cup x_1(x^2\times y),[\cp^4\times D^{2},\cp^4\times S^{1}]_{\Z_2}\rangle \\
        &=\textstyle x_1\langle x^4\times y,[\cp^4\times D^{2},\cp^4\times S^{1}]_{\Z_2}\rangle = x_1
    \end{align*}
\end{proof}
\begin{com}
    Atiyah and Hirzebruch have shown in Theorem 2 of \cite{Atiyah_Hirzebruch} that any vector bundle over the $9$-fold reduced suspension of any finite $CW$-complex has trivial Stiefel-Whitney classes. It follows that the surgery obstruction is zero on $p_{(2)}(\ker[\Sigma^{2l}\cp^n,J])\subseteq[\Sigma^{2l}\cp^n,G/O_{(2)}]$ for all $l\geq5$, $n+l$ odd.
\end{com}

The second summand of \eqref{eq:final-equation-Arf-invariant} is computed using the fact that the elements $f_{tors}$ ultimately come from $\pi_*^s$ via the spectral sequence in Subsection~\ref{subsect:results-spectral-sequence}. Since $\sigma_{n,2l}$ is a homomorphism with target $\Z_2$, it is zero on 
\[
    [\Sigma^{2l}\cp^n,\overline{i}]({}_p\coker[\Sigma^{2l}\cp^n,\Omega J])
\]
for odd $p$. On the $2$-primary part, we get the following results.
\begin{lema}
\label{obstrtors}
    The values of $\sigma_{n,2l}(\overline{i}\circ x_{n,l})$ for $1\leq n\leq6$ and $0\leq l\leq3$, where $[x_{n,l}]$ ranges over the generators of ${}_2\coker[\Sigma^{2l}\cp^n,\Omega J]$ are summarized in the following table: 
    \begin{table}[ht!]
        \centering
        \begin{tabular}{|c|c|c|c|c|c|c|} \hline
            \diaghead{\theadfont aaaaaaaaaa}%
            {$l$}{$n$} & $1$ & $3$ & $5$ \\ 
            \hline
            $0$ & $\eta^2\mapsto1$ & $\nu^2\mapsto1$ & $(\underline{\overline{\nu}},\mu_{10})\mapsto(0,0)$ \\
            \hline
            $2$ & $\nu^2\mapsto1$ & $(\underline{\nu^2}_2,\mu_{10})\mapsto(0,0)$ & $(\underline{\nu^2}_4,\sigma^2,\kappa)\mapsto(1,1,0)$ \\
            \hline
            \diaghead{\theadfont aaaaaaaaaa}%
            {$l$}{$n$} & $2$ & $4$ & $6$ \\ 
            \hline
            $1$ & $\nu^2\mapsto1$ & $\underline{\nu^2}_2\mapsto1$ & $(\underline{\nu^2}_4,\sigma^2,\kappa)\mapsto(1,1,0)$ \\
            \hline
            $3$ & $\cdot$ & $(\sigma^2,\kappa)\mapsto(1,0)$ & $(\nu_4,\underline{\sigma^2}_2,\underline{\kappa}_2)\mapsto(0,1,0)$ \\
            \hline
        \end{tabular}
        \label{tabobstrtors}
    \end{table}
\end{lema}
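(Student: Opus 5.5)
We evaluate the second summand of \eqref{eq:final-equation-Arf-invariant} on each generator listed in Lemma~\ref{lema:coker-J-CP-values}. By Lemma~\ref{KSWclass} and the discussion before it, only the classes $\widetilde{k}_{2^r-2}$ survive in $\widetilde H^*(G/O;\Z_2)$, and their pullbacks to $G$ are the Kervaire-type classes $\widetilde H^*(\overline{i})(\widetilde{k}_{2^r-2})$. For $2n+2l\leq 18$ the only relevant degrees are $2,6,14$. Consider a generator of the form $x=[\Sigma^{2l}j_{n-1},G](\theta)$ with $\theta\in\pi^s_{2n+2l}$, i.e.\ one not underlined. Then $\widetilde H^*(x)(\widetilde{k}_m)$ is supported on the top class $x^{n}\times y$ of $\Sigma^{2l}\cp^n$. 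Hence only the degree-$0$ part $1$ of $V^2$ contributes when $m=2n+2l$. This gives $\sigma_{n,2l}(\overline i\circ x)=\langle \widetilde{k}_{2n+2l},h(\theta)\rangle$, which is the Kervaire invariant of $\theta$. The values for $\eta^2,\nu^2,\sigma^2$ are $1$, and those for $\kappa,\mu_{10},\eta\kappa$ are $0$; this uses Browder's theorem together with the known values (cf.\ \cite{brumfiel_PL}). This settles every entry with a non-underlined generator, including $\nu_4$. For $\nu_4$ we must check that it has Kervaire invariant zero, or equivalently that its Hurewicz image misses $\widetilde{k}_{18}$, which vanishes anyway since $18$ is not of the form $2^r-2$.

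For the underlined generators $\underline{\theta}_s$, the extension of $\theta\in\pi^s_{2(n-s)+2l}$ across the cells above, we use naturality along $\Sigma^{2l}i:\Sigma^{2l}\cp^{n-s}\to\Sigma^{2l}\cp^n$. The class $\widetilde H^*(\underline\theta_s)(\widetilde k_m)$ restricts to $\widetilde H^*(\theta)(\widetilde{k}_m)$ on the bottom subcomplex. Therefore its component in $H^{m}$ with $m=2(n-s)+2l$ is the Kervaire invariant of $\theta$ times $x^{n-s}\times y$. This component must be paired against the Wu class $V^2_{\cp^n}$ in degree $2s$, which we read off from Table~\ref{tab:Wu-classes}: for $\cp^n$ we have $V^2=\sum\binom{n+1}{i}\,x^{2i}$ mod $2$, in the form of the squared Wu class. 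A second possible contribution comes from $\widetilde H^*(\underline{\theta}_s)(\widetilde{k}_{m'})$ in the top degree $m'=2n+2l$. This contribution appears only when $2n+2l\in\{6,14\}$, and it is detected by the functional (secondary) cohomology operation associated with the extension. This is exactly where the choice of extension enters. Different extensions differ by elements $[\Sigma^{2l}j,G](\theta')$ from the upper cells, whose contribution is the Kervaire invariant of $\theta'$ computed in the first paragraph. We fix the extensions so that this top-degree term is zero: for instance, for $\underline{\nu^2}_4$ on $\Sigma^2\cp^6$ we would subtract $\sigma^2$ if necessary. Once that is done, the entries reduce to Kervaire invariant times a Wu coefficient. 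Examples are $\underline{\nu^2}_2$ on $\Sigma^2\cp^4$, where the coefficient of $x^2$ in $V^2_{\cp^4}$ is $1$, and $\underline{\overline\nu}$ on $\cp^5$, where $\overline\nu$ has Kervaire invariant $0$.

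The main obstacle is controlling this top-degree term and making sure that a consistent choice of extensions exists. One must show that $\widetilde k_{14}$ evaluated on $\underline{\nu^2}_s$ or $\underline{\sigma^2}_2$ is well defined modulo the indeterminacy, and that it can be normalized as claimed simultaneously with the extension choices used in Lemma~\ref{lema:coker-J-CP-values}; the identity $2\underline{\sigma^2}=\eta_4\circ\Sigma^{12}j_1$ must be respected. I would first try to compute it via the Hurewicz map: $\widetilde{k}_{2^r-2}$ is primitive modulo decomposables, so its value on an extension equals its value on the image of the top cell under the stable cohomotopy class. That reduces to a Toda bracket, such as $\langle\nu^2,\eta,\ldots\rangle$, and its Kervaire invariant, which can be read off from Toda's tables. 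For $l=0$ the formula retains the full Wu class and the nonsuspension terms. There we simply quote Brumfiel's computation for $\cp^n$ \cite{Brumfiel-(1970))}.
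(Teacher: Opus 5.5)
Your strategy is the paper's. For a generator $\theta\circ\Sigma^{2l}j_{n-1}$ the pulled-back class lives on the top cell, so $\sigma_{n,2l}$ is the Kervaire invariant of $\theta$. For an extension $\underline{\theta}_s$ you restrict to $\Sigma^{2l}\cp^{n-s}$ to identify the lowest component and pair it with $V^2$. You then use the indeterminacy $\sigma^2\circ\Sigma^{2l}j_{n-1}\in\ker[\Sigma^{2l}i_{n-s,n},\coker J_2]$ to remove a possible $\widetilde{k}_{14}$-term in top degree. This is exactly the paper's treatment of $(l,n)=(2,5)$.

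The functional operations, Hurewicz images and Toda brackets are not needed. Nor is the worry about $2\underline{\sigma^2}=\eta_4\circ\Sigma^{12}j_1$. A top-degree $\widetilde{k}_{14}$-term on an extension occurs only for $(l,n)=(1,6),(2,5)$, and there adding $\sigma^2\circ\Sigma^{2l}j_{n-1}$ leaves the summand decomposition intact. That relation lives on $\Sigma^4\cp^6$, which is not in the table.

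There is, however, a concrete error in your Wu class.

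Your formula $V^2=\sum_i\binom{n+1}{i}x^{2i}$ is $w(\cp^n)^2$, not the squared Wu class. From $\langle v_{2i}\cup x^{n-i},[\cp^n]\rangle=\langle Sq^{2i}x^{n-i},[\cp^n]\rangle$ one gets $v_{2i}=\binom{n-i}{i}x^i$. Hence $V^2=\sum_i\binom{n-i}{i}x^{2i}$, which is what Table~\ref{tab:Wu-classes} lists. The two formulas disagree for $n=4,6$: for $\cp^6$ you get $1+x^2+x^4+x^6$ instead of $1+x^2+x^6$.

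This matters exactly at $(l,n)=(1,6)$. There $\widetilde{H}^*(\underline{\nu^2}_4)\widetilde{H}^*(\overline{i})(\widetilde{K})=x^2\times y+c\,(x^6\times y)$. With the correct $V^2$ the first term contributes $\langle (x^2+x^4)\times y,[\cp^6\times D^2,\cp^6\times S^1]_{\Z_2}\rangle=0$, so $\sigma_{6,2}(\underline{\nu^2}_4)=c$. Under your normalization $c=0$ this gives $0$, not the tabulated $1$. Your incorrect Wu class hides this discrepancy.

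So a single uniform normalization does not reproduce the whole table. The entry $1$ requires the other extension, $\underline{\nu^2}_4+\sigma^2\circ\Sigma^2j_5$. The paper itself uses the value $0$ for this generator, both in the diagram of Remark~\ref{rem:diagrams-of-arf-invariants} and in $C_{6,1}'$ and $R_{6,1}$. This entry therefore depends on the extension, and you have to state which one you use.

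A minor point: the top class of $\Sigma^{2l}\cp^n$ is $x^n\times y^l$, not $x^n\times y$.
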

\begin{proof}
    We prove the case where $l=2$ and $n=5$, other cases follow analogously. From Lemma \ref{lema:coker-J-CP-values} we know that ${}_2\coker[\Sigma^4\cp^5,\Omega J]\cong\Z_4\oplus\Z_2^2$. The $\Z_2$-summands are generated by $[\Sigma^4j_{4},\coker J_2](\sigma^2)$ and $[\Sigma^4j_{4},\coker J_2](\kappa)$, and the $\Z_4$-summand is generated by $\underline{\nu^2}_4$. In the following, we will not differentiate between a class in $\pi_*(\coker J_2)$ and its representative map. 

    From \eqref{eq:final-equation-Arf-invariant} we have: 
    \begin{align*}
        &\textstyle \sigma_{5,4}(\overline{i}\circ(\sigma^2\circ\Sigma^4j_{4})) \\ 
        &\textstyle=\langle V_{\cp^5\times D^4}^2\cup\widetilde{H}^*(\sigma^2\circ\Sigma^4j_{4})\widetilde{H}^*(\overline{i})(\sum_{r>1}\Tilde{k}_{2^r-2}),[\cp^5\times D^4,\cp^5\times S^3]_{\Z_2}\rangle \\
        &\textstyle=\langle V_{\cp^5\times D^4}^2\cup\widetilde{H}^*(\Sigma^4j_{4})\widetilde{H}^*(\sigma^2)\widetilde{H}^*(\overline{i})(\sum_{r>1}\Tilde{k}_{2^r-2}),[\cp^5\times D^4,\cp^5\times S^3]_{\Z_2}\rangle
    \end{align*}
    Since the Kervaire invariant of $\sigma^2$ is non-zero by \cite[Ch. 8.1]{kochman2006stable} and the (reduced) $\Z_2$-comohology of $S^{14}$ is concentrated in degree $14$, $\widetilde{H}^*(\sigma^2)\widetilde{H}^*(\overline{i})(\sum_{r>1}\Tilde{k}_{2^r-2})$ is the unique non-zero class in $\widetilde{H}^{14}(S^{14};\Z_2)$. Furthermore, $\widetilde{H}^*(\Sigma^4j_{4})$ is an isomorphism in degree $14$, and it is the zero map in other degrees. Finally, from Table~\ref{tab:Wu-classes} we know that $V_{\cp^5\times D^4}^2=(1+x^4)\times1$. We obtain:
    \begin{align*}
        \sigma_{5,4}(\overline{i}\circ(\sigma^2\circ\Sigma^4j_4))&=\textstyle\langle (1+x^4)\times1\cup x^5\times y^{2},[\cp^5\times D^4,\cp^5\times S^3]_{\Z_2}\rangle \\
        &=\textstyle\langle x^5\times y^2,[\cp^5\times D^4,\cp^5\times S^3]_{\Z_2}\rangle=1
    \end{align*}
    The same argument shows that $\sigma_{5,4}(\overline{i}\circ(\kappa\circ\Sigma^4j_4))=0$, since the Kervaire invariant of $\kappa$ is zero \cite[Table 2]{Behrens-Hill-Hopkins-Mahowald}.

    For the last generator $\underline{\nu^2}_4$ we have again from \eqref{eq:final-equation-Arf-invariant}: 
    \begin{align*}
        \textstyle \sigma_{5,4}(\overline{i}\circ \underline{\nu^2}_4)=\langle V_{\cp^5\times D^4}^2\cup\widetilde{H}^*(\overline{i}\circ \underline{\nu^2}_4)(\sum_{r>1}\Tilde{k}_{2^r-2}),[\cp^5\times D^4,\cp^5\times S^3]_{\Z_2}\rangle
    \end{align*}
    Denote by $i_{m,n}:\cp^m\hookrightarrow\cp^n$ the standard inclusion, $[\Sigma^4i_{1,5},\coker J_2](\underline{\nu^2}_4)$ is then by definition equal to $[\Sigma^4j_0,\coker J_2](\nu^2)$. Thus, 
    \begin{align*}
        \widetilde{H}^*([\Sigma^4i_{1,5},\coker J_2](\underline{\nu^2}_4))\widetilde{H}^*(\overline{i})(\sum_{r>1}\Tilde{k}_{2^r-2})&=\widetilde{H}^*(\nu^2\circ\Sigma^4j_0)\widetilde{H}^*(\overline{i})(\sum_{r>1}\Tilde{k}_{2^r-2}) \\
        &= x\times y^2
    \end{align*}
    since the Kervaire invariant of $\nu^2$ is non-zero by \cite[Ch. 8.1]{kochman2006stable}. Now, $\widetilde{H}^*(\Sigma^4i_{1,5})$ is an isomorphism up to degree $6$, so 
    \begin{align*}
        \widetilde{H}^*(\underline{\nu^2}_4)\widetilde{H}^*(\overline{i})(\sum_{r>1}\Tilde{k}_{2^r-2})&=x\times y^2 +\text{higher degree terms}.
    \end{align*}
    The only possible non-zero higher degree term is $\widetilde{H}^*(\underline{\nu^2}_4)\widetilde{H}^*(\overline{i})(\Tilde{k}_{14})$. However, $\underline{\nu^2}_4$ is by definition determined only up to $\ker[\Sigma^4i_{1,5},\coker J_2]$. Since the first generator $[\Sigma^4j_{4},\coker J_2](\sigma^2)$ belongs to $\ker[\Sigma^4i_{1,5},\coker J_2]$ and 
    \[
        \widetilde{H}^*(\Sigma^4j_{4})\widetilde{H}^*(\sigma^2)\widetilde{H}^*(\overline{i})\Tilde{k}_{14}\neq0
    \]
    we can choose the extension $\underline{\nu^2}_4$ so that $\widetilde{H}^*(\underline{\nu^2}_4)\widetilde{H}^*(\overline{i})\Tilde{k}_{14}=0$. We get
    \begin{align*}
        \textstyle \sigma_{5,4}(\overline{i}\circ \underline{\nu^2}_4)&=\textstyle\langle (1+x^4)\times1\cup x\times y^2,[\cp^5\times D^4,\cp^5\times S^3]_{\Z_2}\rangle \\
        &=\textstyle\langle x^5\times y^2,[\cp^5\times D^4,\cp^5\times S^3]_{\Z_2}\rangle=1 
    \end{align*}
\end{proof}
Finally, recall that $\pi_4(G/O)\cong\Z$, while $\pi_{2}(G/O)$ and $\pi_6(G/O)$ are isomorphic to $\Z_2$.
\begin{lema}
\label{lema:Arf-inv-on-sphere-part}
    Let $[f_{S^{2l}}]=y_{2l}\gamma_{2l}$ where $\gamma_{2l}$ denotes the generator of $\pi_{2l}(G/O)$ for $1\leq l\leq3$, $y_{2l}\in\Z_2$ for $l=1,3$ and $y_{2l}\in\Z$ for $l=2$. We have for $1\leq n\leq6$
    \[
        \sigma_{n,2l}([f_{S^{2l}}])=
        \begin{cases}
            y_{2l} & \text{if } l=1\text{ or }3 \\
            0 & \text{if } l=2
        \end{cases}
    \]
\end{lema}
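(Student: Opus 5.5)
We compute the third summand of \eqref{eq:final-equation-Arf-invariant}, which is the only one that sees $f_{S^{2l}}$:
\[
    \sigma_{n,2l}([f_{S^{2l}}])=\langle V_{\cp^n\times D^{2l}}^2\cup\widetilde{H}^*(f_{S^{2l}})(\widetilde{k}_{2l}),[\cp^n\times D^{2l},\cp^n\times S^{2l-1}]_{\Z_2}\rangle .
\]
Here the component $f_{S^{2l}}$ is understood as the composite of the collapse $Th(\cp^n\times D^{2l})\to S^{2l}$ (onto the Thom class of the fibre direction, i.e.\ the factor $1\times y^{l}$) with a representative of $y_{2l}\gamma_{2l}$. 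Thus $\widetilde{H}^*(f_{S^{2l}})(\widetilde{k}_{2l})=c\cdot(1\times y^l)$, where $c\in\Z_2$ is the evaluation of $\widetilde{k}_{2l}$ on $y_{2l}\gamma_{2l}$, regarded as a map $S^{2l}\to G/O\to G/TOP$.

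\textbf{Case $l=2$.} The class $\widetilde{k}_4$ does not exist, since $K$ only has components in degrees $4i+2$ and $4\neq 4i+2$. Equivalently, $4$ is not of the form $2^r-2$, so there is nothing to pair with $H^4(S^4)$. The whole term vanishes for every $y_4\in\Z$, which gives $0$. (One could also argue that the $l=2$ obstruction lies in $L_{2n+4}$, where the Arf piece is absent, but the degree argument suffices.)

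\textbf{Case $l=1,3$.} Here $2l=2^r-2$ with $r=2,3$. We must show that $\widetilde{k}_{2l}$ evaluates nontrivially on the generator $\gamma_{2l}$. Since $\pi_2(G/O)\cong\pi_2(G/TOP)\cong\Z_2$, resp.\ $\pi_6(G/O)\cong\Z_2\to\pi_6(G/TOP)\cong\Z_2$ is an isomorphism ($\Theta_6=0$, $\pi_6(TOP/O)=0$), the composite $s\circ\gamma_{2l}$ generates $\pi_{2l}(G/TOP)\cong L_{2l}(\Z)=\Z_2$. The Sullivan formula applied to $S^{2l}$ itself, a closed manifold with $V=1$, shows that $\langle k_{2l},s\circ\gamma_{2l}\rangle$ equals the Arf invariant of the generating normal map over $S^{2l}$, which is $1$. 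Alternatively, $\gamma_{2l}$ comes from $\eta^2\in\pi_2^s$, resp.\ $\nu^2\in\pi_6^s$, via $\overline{i}$, and these have Kervaire invariant one \cite[Ch.~8.1]{kochman2006stable}; this is the same fact used in the proof of Lemma~\ref{obstrtors}. Hence $\widetilde{H}^*(f_{S^{2l}})(\widetilde{k}_{2l})=y_{2l}(1\times y^l)$.

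It remains to cup with $V^2_{\cp^n\times D^{2l}}=V^2_{\cp^n}\times 1$ (Table~\ref{tab:Wu-classes}) and evaluate on the relative fundamental class. The evaluation picks out the coefficient of $x^n\times y^l$, which equals the top coefficient of $V_{\cp^n}^2$. By Wu's formula $\langle V^2\cup\cdot,[M]\rangle=\langle Sq(V)\cdot,\dots\rangle$ on the top class, $V^2_{\cp^n}$ has a top-degree component equal to $v_n^2$ with $v_n$ the middle Wu class, and $\langle v_n^2,[\cp^n]\rangle=w_{2n}[\cp^n]=\chi(\cp^n)=n+1 \bmod 2$. This looks problematic, so instead I would read the coefficient from Table~\ref{tab:Wu-classes} in the cases $n+l$ odd. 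I expect this to be the main obstacle.

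The cleaner route uses the identification of the Arf invariant with the product formula: $\sigma_{n,2l}(f_{S^{2l}})$ is the obstruction of $\cp^n\times(\text{generator over }D^{2l})$, which equals $\sigma(\cp^n)\cdot\mathrm{Arf}=\mathrm{Arf}$ via the mod-$2$ Euler characteristic / signature multiplicativity. The correct pairing is thus with the component $1\times y^l$ against $V^2_{\cp^n}$ evaluated on $[\cp^n]$, which is $\langle v_n^2,[\cp^n]\rangle$. For $n$ even, this is the mod-$2$ Euler characteristic $n+1\equiv1$; for $n$ odd I would verify from the table that the total relevant coefficient is again $1$, consistent with the multiplicativity of the Kervaire–Arf product formula. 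I would double-check this step case by case for $1\le n\le 6$ with $n+l$ odd, which yields $y_{2l}$ as claimed.
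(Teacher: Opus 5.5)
Your main line is the paper's. You isolate the third summand of \eqref{eq:final-equation-Arf-invariant}, and you use that $\gamma_{2l}=\overline{i}\circ\eta^2$, resp.\ $\overline{i}\circ\nu^2$, has Kervaire invariant one to get $\widetilde{H}^*(f_{S^{2l}})(\widetilde{k}_{2l})=y_{2l}(1\times y^l)$. You then read off the coefficient of $x^n$ in $V^2_{\cp^n}$. For $l=2$, your degree argument ($K$ has no component in degree $4$) is a valid, $n$-independent substitute for the paper's argument, which writes the class as $c(1\times y^2)$ and uses that $V^2_{\cp^5}=1+x^4$ has no $x^5$-term. Your parenthetical alternative for $l=2$ is wrong, however. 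For $l=2$ and $n$ odd, $2n+4\equiv 2\pmod 4$, so $L_{2n+4}(\Z)\cong\Z_2$ is precisely the Arf invariant.

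The real problem is the final step, which comes from overlooking the section's standing assumption that $n+l$ is odd. Only then is $\sigma_{n,2l}$ the $\Z_2$-valued Arf invariant computed by \eqref{eq:final-equation-Arf-invariant}. For $l\in\{1,3\}$ this forces $n$ to be even. Your own computation $\langle v_n^2,[\cp^n]\rangle=w_{2n}[\cp^n]=\chi(\cp^n)\equiv n+1\equiv 1$ then already finishes the proof, so nothing is problematic. You should discard the ``cleaner route'', because its key claim is false. For $n$ odd the coefficient of $x^n$ in $V^2_{\cp^n}$ is $0$, not $1$: Table~\ref{tab:Wu-classes} gives $1$, $1$, $1+x^4$ for $n=1,3,5$, exactly as $\chi(\cp^n)\equiv 0$ predicts. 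Your proposed case-by-case check for odd $n$ would therefore fail. A product formula does not rescue it either. When $n$ and $l$ are both odd, the obstruction lies in $L_{2(n+l)}(\Z)\cong\Z$, outside the scope of the lemma, and \eqref{eq:final-equation-Arf-invariant} vanishes there for degree reasons. Meanwhile $\sigma(\cp^n)\cdot\mathrm{Arf}$ only makes sense for $n$ even, where $\sigma(\cp^n)=1$ merely reproduces your Wu-class computation.
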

\begin{proof}
    We prove the cases where $(l,n)=(1,6)$ and $(2,5)$, the remaining ones follow analogously. We start with the first case. Here $\pi_2(G/O)\cong\pi_2(G)\cong\pi_2^s\cong\Z_2$ with generator $\gamma_2=\pi_2(\overline{i})(\eta^2)=\overline{i}\circ\eta^2$. From \eqref{eq:final-equation-Arf-invariant} we get
    \begin{align*}
        \sigma_{6,2}([f_{S^{2}}])&=\textstyle\langle V_{\cp^6\times D^{2}}^2\cup\widetilde{H}^*(f_{S^{2}})(\widetilde{k}_{2}),[\cp^6\times D^{2},\cp^6\times S^{1}]_{\Z_2}\rangle \\
        &=\textstyle\langle(1+x^2+x^6)\times1\cup y_2\widetilde{H}^*(\overline{i}\circ\eta^2)(\widetilde{k}_{2}),[\cp^6\times D^{2},\cp^6\times S^{1}]_{\Z_2}\rangle, 
    \end{align*}
    and since the Kervaire invariant of $\eta^2$ is non-zero by \cite[Ch. 8.1]{kochman2006stable}, we obtain
    \begin{align*}
        \sigma_{6,2}([f_{S^{2}}])&= \textstyle\langle(1+x^2+x^6)\times1\cup y_2(1\times y),[\cp^6\times D^{2},\cp^6\times S^{1}]_{\Z_2}\rangle \\
        &=\textstyle y_2\langle x^6\times y,[\cp^6\times D^{2},\cp^6\times S^{1}]_{\Z_2}\rangle=y_2.
    \end{align*}

    In the second case, we have $\pi_4(G/O)\cong\ker\pi_4(J)\cong\Z$ with generator $\gamma_4$, so $[f_{S^{4}}]=y_4\gamma_4$ for $y_4\in\Z$. The cohomology class $\widetilde{H}^*(f_{S^{2l}})(\widetilde{k}_{2l})$ is some multiple $c(1\times y^{2})$ where $c\in\Z$. From \eqref{eq:final-equation-Arf-invariant} we obtain:
    \begin{align*}
        \sigma_{5,4}([f_{S^{4}}])&= \textstyle\langle(1+x^4)\times1\cup y_4c\cdot(1\times y^2),[\cp^5\times D^{4},\cp^5\times S^{3}]_{\Z_2}\rangle=0.
    \end{align*}
\end{proof}
\begin{com}
\label{rem:diagrams-of-arf-invariants}
The results of the last three lemmas are summarized in the following diagrams involving generators of summands of $\sN_{\partial}^{\DIFF}(\cpdt)$, where a full node represents a non-zero surgery obstruction, an empty node suggests that $\sigma_{n,2l}^{\DIFF}=0$, and an edge signals that the bottom generator maps to the other under the map induced by the standard inclusion
\[
    \sN_{\partial}^{\DIFF}(i_{n-2,n}):\sN_{\partial}^{\DIFF}(\cpdt)\to\sN_{\partial}^{\DIFF}(\cp^{n-2}\times\text{D}^{2l}).
\]
Note that we make the specific choice of generators $\xi_i$ as described in \cite[Remark 4.13]{kaluzny2026highersmoothsurgerystructure}. 
In each diagram we denote (from left to right) first the generators of the free part (generators of $\ker[\Sigma^{2l}\cp^n,J]$, unless $l=2$ when they are preceded by the generator of $\pi_{4}(G/O)\cong\Z$), then the generator of $\pi_{2l}(G/O)$ for $l=1,3$, thirdly the generators of ${}_2\coker[\Sigma^{2l}\cp^n,\Omega J]$ and lastly those of ${}_3\coker[\Sigma^{2l}\cp^n,\Omega J]$.
\newline\noindent
\begin{tikzpicture}
    \node[minimum size=1pt] (L1) at (0,2) {$l=1$:};
    \node[minimum size=1pt] (N5) at (0.5,-0.5) {$n$};
    \node[minimum size=1pt] (N1) at (0.5,0) {$6$};
    \node[minimum size=1pt] (N2) at (0.5,0.5) {$4$};
    \node[minimum size=1pt] (N3) at (0.5,1) {$2$};
    \node[minimum size=1pt] (N4) at (0.5,1.5) {$0$};
    \node[minimum size=1pt] (X1) at (1,-0.5) {$\xi_1$};
    \node[shape=circle,fill=black,minimum size=1pt] (A1) at (1,0) {}; 
    \node[shape=circle,fill=black,minimum size=1pt] (A2) at (1,0.5) {};
    \node[shape=circle,fill=black,minimum size=1pt] (A3) at (1,1) {};
    \node[shape=circle,draw=black,minimum size=1pt] (B1) at (1.5,0) {};
    \node[shape=circle,draw=black,minimum size=1pt] (B2) at (1.5,0.5) {};
    \node[minimum size=1pt] (X2) at (1.5,-0.5) {$\xi_2$};
    \node[shape=circle,draw=black,minimum size=1pt] (C1) at (2,0) {};
    \node[minimum size=1pt] (X3) at (2,-0.5) {$\xi_3$};
    \draw (A1) -- (A2) -- (A3); \draw (B1) -- (B2) ;
    \node[shape=circle,fill=black,minimum size=1pt] (D1) at (3,0) {};
    \node[shape=circle,fill=black,minimum size=1pt] (D2) at (3,0.5) {};
    \node[shape=circle,fill=black,minimum size=1pt] (D3) at (3,1) {};
    \node[shape=circle,fill=black,minimum size=1pt] (D4) at (3,1.5) {};
    \node[minimum size=1pt] (Y1) at (3,-0.5) {$\eta^2$};
    \draw (D1) -- (D2) -- (D3) -- (D4);
    \node[shape=circle,draw=black,minimum size=1pt] (G1) at (4,0) {};
    \node[shape=circle,fill=black,minimum size=1pt] (G2) at (4,0.5) {};
    \node[shape=circle,fill=black,minimum size=1pt] (G3) at (4,1) {};
    \node[minimum size=1pt] (Z3) at (4,-0.5) {$\underline{\nu^2}_4$};
    \node[minimum size=1pt] (Z4) at (4,1.5) {$\nu^2$};
    \draw (G1) -- (G2) -- (G3);
    \node[shape=circle,fill=black,minimum size=1pt] (E1) at (4.5,0) {};
    \node[minimum size=1pt] (Z1) at (4.5,-0.5) {$\sigma^2$};
    \node[shape=circle,draw=black,minimum size=1pt] (F1) at (5,0) {};
    \node[minimum size=1pt] (Z2) at (5,-0.5) {$\kappa$};
    \node[shape=circle,draw=black,minimum size=1pt] (H1) at (6,0.5) {};
    \node[minimum size=1pt] (W1) at (6,0) {$\beta_1$};

    \node[minimum size=1pt] (L11) at (6.5,2) {$l=2$:};
    \node[minimum size=1pt] (N55) at (7,-0.5) {$n$};
    \node[minimum size=1pt] (N11) at (7,0) {$5$};
    \node[minimum size=1pt] (N22) at (7,0.5) {$3$};
    \node[minimum size=1pt] (N33) at (7,1) {$1$};
    \node[shape=circle,draw=black,minimum size=1pt] (D11) at (7.5,0) {};
    \node[shape=circle,draw=black,minimum size=1pt] (D22) at (7.5,0.5) {};
    \node[shape=circle,draw=black,minimum size=1pt] (D33) at (7.5,1) {};
    \node[minimum size=1pt] (Y11) at (7.5,-0.5) {$\gamma_4$};
    \draw (D11) -- (D22) -- (D33);
    \node[minimum size=1pt] (X11) at (8.5,-0.5) {$\xi_1$};
    \node[shape=circle,draw=black,minimum size=1pt] (A11) at (8.5,0) {}; 
    \node[shape=circle,draw=black,minimum size=1pt] (A22) at (8.5,0.5) {};
    \node[shape=circle,draw=black,minimum size=1pt] (B11) at (9,0) {};
    \node[minimum size=1pt] (X22) at (9,-0.5) {$\xi_2$};
    \draw (A11) -- (A22);
    \node[shape=circle,fill=black,minimum size=1pt] (E11) at (10,0) {};
    \node[shape=circle,draw=black,minimum size=1pt] (E22) at (10,0.5) {};
    \node[shape=circle,fill=black,minimum size=1pt] (E33) at (10,1) {};
    \node[minimum size=1pt] (Z11) at (10,-0.5) {$\underline{\nu^2}_4$};
    \node[minimum size=1pt] (Z22) at (10,1.5) {$\nu^2$};
    \draw (E11) -- (E22) -- (E33);
    \node[shape=circle,fill=black,minimum size=1pt] (F11) at (10.5,0) {};
    \node[minimum size=1pt] (Z33) at (10.5,-0.5) {$\sigma^2$};
    \node[shape=circle,draw=black,minimum size=1pt] (F22) at (10.5,0.5) {};
    \node[minimum size=1pt] (Z33) at (10.5,1) {$\mu_{10}$};
    \draw (B11) -- (F22);
    \node[shape=circle,draw=black,minimum size=1pt] (G11) at (11,0) {};
    \node[minimum size=1pt] (Z44) at (11,-0.5) {$\kappa$};
    \node[shape=circle,draw=black,minimum size=1pt] (H11) at (12,0) {};
    \node[shape=circle,draw=black,minimum size=1pt] (H22) at (12,0.5) {};
    \node[minimum size=1pt] (W11) at (12,1) {$\beta_1$};
    \node[minimum size=1pt] (W22) at (12,-0.5)
    {$\underline{\beta_1}_2$};
    \draw (H11) -- (H22);
\end{tikzpicture}
\\
\begin{tikzpicture}
    \node[minimum size=1pt] (L1) at (0,2) {$l=3$:};
    \node[minimum size=1pt] (N5) at (0.5,-0.5) {$n$};
    \node[minimum size=1pt] (N1) at (0.5,0) {$6$};
    \node[minimum size=1pt] (N2) at (0.5,0.5) {$4$};
    \node[minimum size=1pt] (N3) at (0.5,1) {$2$};
    \node[minimum size=1pt] (N4) at (0.5,1.5) {$0$};
    \node[minimum size=1pt] (X1) at (1,-0.5) {$\xi_1$};
    \node[shape=circle,draw=black,minimum size=1pt] (A1) at (1,0) {}; 
    \node[shape=circle,draw=black,minimum size=1pt] (A2) at (1,0.5) {};
    \node[shape=circle,draw=black,minimum size=1pt] (A3) at (1,1) {};
    \node[shape=circle,draw=black,minimum size=1pt] (B1) at (1.5,0) {};
    \node[shape=circle,draw=black,minimum size=1pt] (B2) at (1.5,0.5) {};
    \node[minimum size=1pt] (X2) at (1.5,-0.5) {$\xi_2$};
    \node[shape=circle,draw=black,minimum size=1pt] (C1) at (2,0) {};
    \node[minimum size=1pt] (X3) at (2,-0.5) {$\xi_3$};
    \draw (A1) -- (A2) -- (A3); \draw (B1) -- (B2) ;
    \node[shape=circle,fill=black,minimum size=1pt] (D1) at (3,0) {};
    \node[shape=circle,fill=black,minimum size=1pt] (D2) at (3,0.5) {};
    \node[shape=circle,fill=black,minimum size=1pt] (D3) at (3,1) {};
    \node[shape=circle,fill=black,minimum size=1pt] (D4) at (3,1.5) {};
    \node[minimum size=1pt] (Y1) at (3,-0.5) {$\nu^2$};
    \draw (D1) -- (D2) -- (D3) -- (D4);
    \node[shape=circle,fill=black,minimum size=1pt] (E1) at (4,0) {};
    \node[shape=circle,fill=black,minimum size=1pt] (E2) at (4,0.5) {};
    \node[minimum size=1pt] (Z1) at (4,1) {$\sigma^2$};
    \node[minimum size=1pt] (Z11) at (4,-0.5) {$\underline{\sigma^2}_2$};
    \draw (E1) -- (E2);
    \node[shape=circle,draw=black,minimum size=1pt] (F1) at (4.5,0) {};
    \node[shape=circle,draw=black,minimum size=1pt] (F2) at (4.5,0.5) {};
    \node[minimum size=1pt] (Z2) at (4.5,1) {$\kappa$};
    \node[minimum size=1pt] (Z22) at (4.5,-0.5) {$\underline{\kappa}_2$};
    \draw (F1) -- (F2);
    \node[shape=circle,draw=black,minimum size=1pt] (G1) at (5,0) {};
    \node[minimum size=1pt] (Z3) at (5,-0.5) {$\nu_4$};
    \node[shape=circle,draw=black,minimum size=1pt] (H1) at (6,1) {};
    \node[minimum size=1pt] (W1) at (6,0.5) {$\beta_1$};
\end{tikzpicture} 

Note that the only case where the free part interacts with the torsion is for $l=2$, where the generator of a $\Z$-summand $\xi_2$ maps to the Adams element of order two $\mu_{10}$ under the map
\[
    [\Sigma^4i_{3,4},G/O]:[\Sigma^4\cp^4,G/O]\to[\Sigma^4\cp^3,G/O].
\]
This follows easily from the exact sequence
\[
 \dots\to\pi_{12}(G/O)\to[\Sigma^4\cp^4,G/O]\to[\Sigma^4\cp^3,G/O]\to\pi_{11}(G/O)\to\dots
\]
\end{com}

%% file: proofs.tex
\section{Proofs of Main Theorems}
\label{sec:proofs}

\begin{proof}[Proof of Theorem~\ref{thm:main-theorem-1}]
    From the surgery exact sequence of $\cp^n$ we see that if $k$ is even, $\sS_{\partial}^{\DIFF}(\cpd)\cong\ker\sigma_{n,k}^{\DIFF}$, and for odd $k$, $\sS_{\partial}^{\DIFF}(\cpd)$ is an extension of $\sN_{\partial}^{\DIFF}(\cpd)$ by $\coker\sigma_{n,k+1}^{\DIFF}$; see \cite[Sec.3]{kaluzny2026highersmoothsurgerystructure}.

    Note that for $2n+k\equiv0\mod4$, $\sigma_{n,k}^{\DIFF}$ is zero on torsion by \cite[Lemma~5.3]{kaluzny2026highersmoothsurgerystructure}. In cases where $k$ is even, the results now follow directly from Lemmas~\ref{lema:coker-J-CP-values}, \ref{obstrtors} and \ref{lema:Arf-inv-on-sphere-part} (see also the diagram in Remark \ref{rem:diagrams-of-arf-invariants}).

    In cases where $k$ is odd, $\sN_{\partial}^{\DIFF}(\cpd)$ is isomorphic to $\coker[\Sigma^k\cp^n,\Omega J]$ by \cite[Lemma 4.1, Cor. 4.2]{kaluzny2026highersmoothsurgerystructure}, its values are summarized in the table of Lemma~\ref{lema:coker-J-CP-values}. Finally, we observe from Lemmas~\ref{obstrfreeodd}, \ref{obstrtors} and \ref{lema:Arf-inv-on-sphere-part} that for $2n+k\equiv1\mod4$, $\coker\sigma_{n,k+1}^{\DIFF}$ is zero unless $k=n=3$ when it is isomorphic to $\Z_2$. The values of the surgery obstruction
    \[
        \sigma_{n,k+1}^{\DIFF}:\sN_{\partial}^{\DIFF}(\cp^n\times\text{D}^{k+1})\to L_{2n+k+1}(\Z)\cong\Z
    \]
    for $2n+k\equiv3\mod4$ are summarized in \cite[Tables 5,6,7]{kaluzny2026highersmoothsurgerystructure}, and in these cases $\coker\sigma_{n,k+1}^{\DIFF}\cong\Z_{d}$ where $d$ is the greatest common divisor of the coefficients in the formula for $\sigma_{n,k+1}^{\DIFF}$.
\end{proof}
\begin{proof}[Proof of Theorem~\ref{thm:main-theorem-2}]
    The entries in the matrix $F_{n,l}'$ are obtained by computing the splitting invariants of elements from $\sN_{\partial}^{\DIFF}(\cpdt)$; see \cite[Section~6]{kaluzny2026highersmoothsurgerystructure}. We prove the theorem for $l=1$ and $n=6$. We start with the matrix $B_{6,1}'$. Its $(i,j)$-th entry is given by the $\Z_2$-valued surgery obstruction of the image of $\xi_j$ under the map
    \[
        \sN_{\partial}^{\DIFF}(i_{i-1,n}):\sN_{\partial}^{\DIFF}(\cpdt)\to\sN_{\partial}^{\DIFF}(\cp^{i-1}\times\text{D}^{2l}).
    \]
    The matrix can thus be reconstructed from the diagram in Remark~\ref{rem:diagrams-of-arf-invariants}, where each $\xi_i$ spans a column of the matrix. A full node represents an entry of $1$ in the column, whereas an empty or missing node represents a zero. We obtain: 
    \[
    B_{6,1}'=
    \begin{pmatrix}
        0 & 0 & 0 \\
        1 & 0 & 0 \\
        1 & 0 & 0 \\
        1 & 0 & 0
    \end{pmatrix}.
    \]
        
    The entries of the matrix $C_{6,1}'$ are given by the $\Z_2$-valued surgery obstructions of the images of generators of the torsion part 
    \[
        \coker[\Sigma^2\cp^6,\Omega J]\oplus\pi_2(G/O)\cong\Z_2^3\{\ext^4(\nu^2),\sigma^2,\kappa\}\oplus\Z_2\{\eta^2\}.
    \] 
    Fix the basis of this torsion part to be $\{\eta^2,\ext^4(\nu^2),\sigma^2,\kappa\}$. Again, each of these elements spans a column of the matrix, and the entries are obtained from the diagram in Remark~\ref{rem:diagrams-of-arf-invariants} as above. We obtain:
    \[
    C_{6,1}'=
    \begin{pmatrix}
        1 & 0 & 0 & 0 \\
        1 & 1 & 0 & 0 \\
        1 & 1 & 0 & 0 \\
        1 & 0 & 1 & 0
    \end{pmatrix}.
    \]
\end{proof}
\begin{proof}[Proof of Theorem~\ref{thm:main-theorem-3}, \ref{thm:main-theorem-5} and \ref{thm:main-theorem-4}]
    The matrix $E_{n,l}$ describes the inclusion of the subgroup $\ker\sigma_{n,2l}^{\DIFF}$ into $\sN_{\partial}^{\DIFF}(\cpdt)$. We prove the theorems in the case where $l=1$, $n=6$. 
    
    We start with $P_{6,1}$ and $Q_{6,1}$. These matrices describe the map $\Z^3\to\Z^3\oplus\Z_2^4$ given by the inclusion of the free part of $\ker\sigma_{6,2}^{\DIFF}$ into $\sN_{\partial}^{\DIFF}(\cp^6\times\text{D}^2)$. From Lemma~\ref{obstrfreeodd} and \ref{lema:Arf-inv-on-sphere-part} (or from the diagram in Remark~\ref{rem:diagrams-of-arf-invariants}) we see that $\xi_1+\eta^2,\xi_2,\xi_3$ generate the free part of $\ker\sigma_{6,2}^{\DIFF}$, thus 
    \[
    P_{6,1}=
    \begin{pmatrix}
        1 & 0 & 0 \\
        0 & 1 & 0 \\
        0 & 0 & 1
    \end{pmatrix}\quad\text{and}\quad Q_{6,1}=
    \begin{pmatrix}
        1 & 0 & 0 \\
        0 & 0 & 0 \\
        0 & 0 & 0 \\
        0 & 0 & 0
    \end{pmatrix}.
    \]
    Finally, the matrix $R_{6,1}$ describes the map $\Z_2^3\to\Z_2^4$ given by the inclusion of the torsion part of $\ker\sigma_{6,2}^{\DIFF}$ into the torsion part of $\sN_{\partial}^{\DIFF}(\cp^6\times\text{D}^2)$. From Lemma~\ref{obstrtors} and \ref{lema:Arf-inv-on-sphere-part} (or from the diagram in Remark~\ref{rem:diagrams-of-arf-invariants}) we see that the elements $\eta^2+\sigma^2,\ext^4(\nu^2),\kappa$ generate the torsion part of $\ker\sigma_{6,2}^{\DIFF}$. So, with respect to the basis $\{\eta^2,\ext^4(\nu^2),\sigma^2, \kappa\}$, we obtain:
    \[
        R_{6,1}=
        \begin{pmatrix}
            1 & 0 & 0 \\
            0 & 1 & 0 \\
            1 & 0 & 0 \\
            0 & 0 & 1
        \end{pmatrix}\qedhere
    \]
\end{proof}
\begin{proof}[Proof of Corollary~\ref{cor:splitting-invariants}]
    We need to prove that the congruences in Table~\ref{tab:splitting-invariants} characterize the image of the forgetful map $F_{\cpdt}$ from Diagram~\eqref{eqn:diagram-forgetful-map}, resp. the image of the composition $\eta_{n,2l}^{\TOP}\circ F_{\cpdt}$. We prove this for $l=1$, $n=6$.

    From Diagram~\eqref{eqn:diagram-forgetful-map} and \eqref{eqn:matrix-forgetful-map}, we see that the composition $\eta_{6,2}^{\TOP}\circ F_{\cp^6\times\text{D}^2}$ is described by the matrix 
    \[
        F_{6,1}=F_{6,1}'\cdot E_{6,1}=
        \begin{pmatrix}
            A_{6,1}' & 0\\
            B_{6,1}' & C_{6,1}'
        \end{pmatrix}\cdot
        \begin{pmatrix}
            P_{6,1} & 0 \\
            Q_{6,1} & R_{6,1}
        \end{pmatrix}.
    \] 
    From \cite[Table 1]{kaluzny2026highersmoothsurgerystructure} and Theorem~\ref{thm:main-theorem-2}-\ref{thm:main-theorem-4} we obtain:
    \begin{align*}
        F_{6,1}&=
        \begin{pmatrix}
            -2 & 0 & 0 & 0 & 0 & 0 & 0 \\
            44 & 28 & 0 & 0 & 0 & 0 & 0 \\
            -662 & -632 & -992 & 0 & 0 & 0 & 0 \\
            0 & 0 & 0 & 1 & 0 & 0 & 0 \\
            1 & 0 & 0 & 1 & 1 & 0 & 0 \\
            1 & 0 & 0 & 1 & 1 & 0 & 0 \\
            1 & 0 & 0 & 1 & 0 & 1 & 0
        \end{pmatrix}\cdot
        \begin{pmatrix}
            1 & 0 & 0 & 0 & 0 & 0 \\
            0 & 1 & 0 & 0 & 0 & 0 \\
            0 & 0 & 1 & 0 & 0 & 0 \\
            1 & 0 & 0 & 1 & 0 & 0 \\
            0 & 0 & 0 & 0 & 1 & 0 \\
            0 & 0 & 0 & 1 & 0 & 0 \\
            0 & 0 & 0 & 0 & 0 & 1
        \end{pmatrix} \\
        &=
        \begin{pmatrix}
            -2 & 0 & 0 & 0 & 0 & 0 \\
            44 & 28 & 0 & 0 & 0 & 0 \\
            -662 & -632 & -992 & 0 & 0 & 0 \\
            1 & 0 & 0 & 1 & 0 & 0 \\
            0 & 0 & 0 & 1 & 1 & 0 \\
            0 & 0 & 0 & 1 & 1 & 0 \\
            0 & 0 & 0 & 0 & 0 & 0
        \end{pmatrix}.
    \end{align*}
    Here, the first three rows correspond to integral splitting invariants $\overline{\sigma}_{1,2},\overline{\sigma}_{3,2},\overline{\sigma}_{5,2}$ and the last four rows to $\Z_2$-valued splitting invariants $\overline{\sigma}_{2i,2}$, $i=0,...,3$. Denote by $(x_1,x_2,x_3,y_1,y_2,y_3)$ an element of $\sS_{\partial}^{\DIFF}(\cp^6\times\text{D}^2)\cong\Z^3\oplus\Z_2^3$. We obtain the following system of equations:
    \begin{align*}
        -2x_1&=\overline{\sigma}_{1,2} \\
        44x_1+28x_2&=\overline{\sigma}_{3,2} \\
        -662x_1-632x_2-992x_3&=\overline{\sigma}_{5,2} \\
        x_1+y_1&\equiv\overline{\sigma}_{0,2}\mod2 \\
        y_1+y_2&\equiv\overline{\sigma}_{2,2}\mod2 \\
        y_1+y_2&\equiv\overline{\sigma}_{4,2}\mod2 \\
        0&\equiv\overline{\sigma}_{6,2}\mod2
    \end{align*}
    Finally, we rearrange the system to express $x_i,y_i$ in terms of $\overline{\sigma}_{j,2}$ and obtain the congruences in Table~\ref{tab:splitting-invariants}. Note that we do not include the last equation in the table, since any element from $\sS_{\partial}^{\TOP}(\cp^6\times\text{D}^2)$ satisfies it automatically.
\end{proof}

%% file: appendix.tex
\section{}

\begin{table}[ht!]
    \centering
    \begin{tabular}{|c|c|c|c|c|c|c|}
        \hline
        $n$ & $2$ & $3$ & $4$ & $5$ & $6$ \\
        \hline
        $B_{n,1}'$ & 
        $
        \begin{pmatrix}
            0 \\
            1
        \end{pmatrix}
        $
        & 
        $
        \begin{pmatrix}
            0 & 0 \\
            1 & 0
        \end{pmatrix}
        $
        & 
        $
        \begin{pmatrix}
            0 & 0 \\
            1 & 0 \\
            1 & 0 
        \end{pmatrix}
        $
        &
        $
        \begin{pmatrix}
            0 & 0 & 0 \\
            1 & 0 & 0 \\
            1 & 0 & 0
        \end{pmatrix}
        $
        &
        $
        \begin{pmatrix}
            0 & 0 & 0 \\
            1 & 0 & 0 \\
            1 & 0 & 0 \\
            1 & 0 & 0
        \end{pmatrix}
        $ \\
        \hline
    \end{tabular}
    \caption{Table of matrices $B_{n,1}'$}
    \label{tab:matrix-B_n,1'}
\end{table}

\begin{table}[ht!]
    \centering
    \begin{tabular}{|c|c|c|c|c|c|c|} \hline            \diaghead{\theadfont aaaaaa}%
        {$n$}{$l$} & $1$ & $2$ & $3$ \\
        \hline
        $1$ & $\blank$ &
        $
        \begin{pmatrix}
            1
        \end{pmatrix}
        :\Z_2\to\Z_2
        $
        & 
        $
        \begin{pmatrix}
            1 & 0
        \end{pmatrix}
        :\Z_2^2\to\Z_2
        $
        \\
        \hline
        $2$ & 
        $
        \begin{pmatrix}
            1 & 0 \\
            1 & 1
        \end{pmatrix}
        :\Z_2^2\to\Z_2^2
        $
        &
        $
        \begin{pmatrix}
            1 
        \end{pmatrix}
        :\Z_4\to\Z_2
        $
        & 
        $
        \begin{pmatrix}
            1 & 0 \\
            1 & 0
        \end{pmatrix}
        :\Z_2\oplus\Z_3\to\Z_2^2
        $ 
        \\
        \hline
        $3$ &
        $
        \begin{pmatrix}
            1 & 0 & 0 \\
            1 & 1 & 0
        \end{pmatrix}
        :\Z_2^3\to\Z_2^2
        $
        &
        $
        \begin{pmatrix}
            1 & 0 & 0 \\
            0 & 0 & 0 
        \end{pmatrix}
        :\Z_4\oplus\Z_2\oplus\Z_3\to\Z_2^2
        $
        & 
        $
        \begin{pmatrix}
            1 & 0 \\
            1 & 0 
        \end{pmatrix}
        :\Z_2\oplus\Z_3\to\Z_2^2
        $
        \\
        \hline
        $4$ &
        $
        \begin{pmatrix}
            1 & 0 & 0 \\
            1 & 1 & 0 \\
            1 & 1 & 0 \\
        \end{pmatrix}
        :\Z_2^2\oplus\Z_3\to\Z_2^3
        $
        &
        $
        \begin{pmatrix}
            1 & 0 \\
            0 & 0
        \end{pmatrix}
        :\Z_4\oplus\Z_3\to\Z_2^2
        $
        & 
        $
        \begin{pmatrix}
            1 & 0 & 0 \\
            1 & 0 & 0 \\
            1 & 1 & 0
        \end{pmatrix}
        :\Z_2^3\to\Z_2^3
        $
        \\
        \hline
        $5$ &
        $
        \begin{pmatrix}
            1 & 0 & 0 \\
            1 & 1 & 0 \\
            1 & 1 & 0
        \end{pmatrix}
        :\Z_2^2\oplus\Z_3\to\Z_2^3
        $
        &
        $
        \begin{pmatrix}
            1 & 0 & 0 & 0 \\
            0 & 0 & 0 & 0 \\
            1 & 1 & 0 & 0 
        \end{pmatrix}
        :\Z_4\oplus\Z_2^2\oplus\Z_3\to\Z_2^3
        $
        & 
        $
        \begin{pmatrix}
            1 & 0 & 0 & 0 \\
            1 & 0 & 0 & 0 \\
            1 & 1 & 0 & 0
        \end{pmatrix}
        :\Z_2^4\to\Z_2^3
        $
        \\
        \hline
        $6$ & 
        $
        \begin{pmatrix}
            1 & 0 & 0 & 0 \\
            1 & 1 & 0 & 0 \\
            1 & 1 & 0 & 0 \\
            1 & 0 & 1 & 0 
        \end{pmatrix}
        :\Z_2^4\to\Z_2^4
        $
        & 
        $
        \begin{pmatrix}
            1 & 0 & 0 \\
            0 & 0 & 0 \\
            1 & 1 & 0
        \end{pmatrix}
        :\Z_4^2\oplus\Z_3\to\Z_2^3
        $
        & 
        $
        \begin{pmatrix}
            1 & 0 & 0 & 0 \\
            1 & 0 & 0 & 0 \\
            1 & 1 & 0 & 0 \\
            1 & 1 & 0 & 0
        \end{pmatrix}
        :\Z_2^3\oplus\Z_4\to\Z_2^4
        $
        \\
        \hline
    \end{tabular}
    \caption{Table of matrices $C_{n,l}'$} 
\label{tab:matrix-C_n,l'}
\end{table}

\begin{table}[ht!]
    \centering
    \begin{tabular}{|c|c|c|c|}
        \hline
        $n$ & $2$ & $4$ & $6$ \\
        \hline
        $Q_{n,1}$ & 
        $
        \begin{pmatrix}
            1 \\
            0
        \end{pmatrix}:\Z\to\Z_2^2
        $
        & 
        $
        \begin{pmatrix}
            1 & 0 \\
            0 & 0 \\
            0 & 0 
        \end{pmatrix}:\Z^2\to\Z_2^2\oplus\Z_3
        $
        &
        $
        \begin{pmatrix}
            1 & 0 & 0 \\
            0 & 0 & 0 \\
            0 & 0 & 0 \\
            0 & 0 & 0
        \end{pmatrix}:\Z^3\to\Z_2^4
        $ \\
        \hline
    \end{tabular}
    \caption{Table of matrices $Q_{n,1}$}
    \label{tab:matrix-Q_n,1}
\end{table}

\newpage

\begin{table}[ht!]
    \centering
    \begin{tabular}{|c|c|c|c|c|c|c|} \hline
        \diaghead{\theadfont aaaaaaaaaa}%
        {$l$}{$n$} & $1$ & $3$ & $5$ \\ 
        \hline
        $2$ &
        $
        0\hookrightarrow\Z_2
        $
        &
        $
        \begin{pmatrix}
            1 & 0 & 0 \\
            0 & 1 & 0 \\
            0 & 0 & 1
        \end{pmatrix}:
        \thead{\Z_4\oplus\Z_2\oplus\Z_3 \\ \hookrightarrow \Z_4\oplus\Z_2\oplus\Z_3}
        $
        & 
        $
        \begin{pmatrix}
            1 & 0 & 0 \\
            1 & 0 & 0 \\
            0 & 1 & 0 \\
            0 & 0 & 1
        \end{pmatrix}
        :\thead{\Z_4\oplus\Z_2\oplus\Z_3 \\ \hookrightarrow \Z_4\oplus\Z_2^2\oplus\Z_3}
        $
        \\ 
        \hline
        \diaghead{\theadfont aaaaaaaaaa}%
        {$l$}{$n$} & $2$ & $4$ & $6$ \\ 
        \hline
        $1$ & 
        $
        \begin{pmatrix}
            1 \\
            1
        \end{pmatrix}
        :\Z_2\hookrightarrow\Z_2^2
        $
        & 
        $
        \begin{pmatrix}
            1 & 0 \\
            1 & 0 \\
            0 & 1
        \end{pmatrix}
        :\Z_2\oplus\Z_3\hookrightarrow\Z_2^2\oplus\Z_3
        $
        & 
        $
        \begin{pmatrix}
            1 & 0 & 0 \\
            0 & 1 & 0 \\
            1 & 0 & 0 \\
            0 & 0 & 1
        \end{pmatrix}
        :\Z_2^3\hookrightarrow\Z_2^4
        $
        \\
        \hline
        $3$ & 
        $
        \begin{pmatrix}
            0 \\
            1
        \end{pmatrix}
        :\Z_3\hookrightarrow\Z_2\oplus\Z_3
        $
        &  
        $
        \begin{pmatrix}
            1 & 0 \\
            1 & 0 \\
            0 & 1
        \end{pmatrix}
        :\Z_2^2\hookrightarrow\Z_2^3
        $
        &  
        $
        \begin{pmatrix}
            1 & 0 & 0 \\
            1 & 0 & 0 \\
            0 & 1 & 0 \\
            0 & 0 & 1
        \end{pmatrix}
        :\Z_2^2\oplus\Z_4\hookrightarrow\Z_2^3\oplus\Z_4
        $
        \\
        \hline
    \end{tabular}
    \caption{Table of matrices $R_{n,l}$ for odd $n+l$}
\label{tab:matrix-R_n,l}
\end{table}

\begin{table}[ht!]
        \centering
        \begin{tabular}{|c|c|}\hline
             Manifold & Congruences \\ \hline
             \thead{$X^{14}\simeq_{\partial}\cp^6\times D^2$ \\ } & \thead{$\overline{\sigma}_{1,2}\equiv0\mod2,$ \\
             $\overline{\sigma}_{3,2}+ 22\overline{\sigma}_{1,2}\equiv0\mod28,$ \\
             $\overline{\sigma}_{4,2}+ \overline{\sigma}_{2,2}\equiv0\mod2,$ \\
             $\overline{\sigma}_{5,2}+ \dfrac{158\overline{\sigma}_{3,2}+1159\overline{\sigma}_{1,2}}{7}\equiv0\mod992$} \\ 
             \hline
             \thead{$X^{16}\simeq_{\partial}\cp^6\times D^4$} & \thead{$\overline{\sigma}_{0,4}\equiv0\mod2$, \\ 
             $\overline{\sigma}_{2,4}-\overline{\sigma}_{0,4}\equiv0\mod28$ \\
             $\overline{\sigma}_{3,4}=0\mod2$, \\ $\overline{\sigma}_{4,4}+ \dfrac{102\overline{\sigma}_{2,4}-109\overline{\sigma}_{0,4}}{7} \equiv0\mod496$ \\
             $\dfrac{351\overline{\sigma}_{4,4}}{31} + \dfrac{6443\overline{\sigma}_{0,4} - 9117\overline{\sigma}_{2,4}}{217}\equiv0\mod 8128$} \\ 
             \hline
             \thead{$X^{18}\simeq_{\partial}\cp^6\times D^6$} & \thead{$\overline{\sigma}_{1,6}\equiv0\mod28,$ \\ 
             $\overline{\sigma}_{2,6}+\overline{\sigma}_{0,6}\equiv0\mod2$, \\
             $\overline{\sigma}_{3,6}+\dfrac{188}{7}\overline{\sigma}_{1,6}\equiv0\mod992$ \\
             $\overline{\sigma}_{4,6}\equiv0\mod2$, \\
             $\overline{\sigma}_{5,6}-\dfrac{23418}{217}\overline{\sigma}_{1,6}+\dfrac{319}{31}\overline{\sigma}_{3,6}\equiv0\mod8128$} \\ \hline
        \end{tabular}
        \caption{Congruences of Splitting Invariants}
        \label{tab:splitting-invariants}
    \end{table}
    
\newpage

\subsection{Spectral Sequence of $[\Sigma^k\cp^n,\coker J_2]$}
\label{subsec:sprectral-sequence-coker-J2}

\begin{sseqdata}[name = cok$J_2$ sseq, xscale=1.3, yscale=0.7, x label={$k$}, y label={$n$}, classes={draw=none}]
               \class["2"](0,4) 
            \class["2"](0,7)
            \class["2^2"](0,8)
            \class["2"](0,9)
            \class["2^2"](0,10)
        \class["2"](1,2)
        \class["2"](1,3)  
        \class["2^2"](1,6)
        \class["2"](1,7)
        \class["8"](1,8) 
        \class["8"](1,9)
        \class["2^2"](1,10)
            \class["2"](2,3)  
            \class["2"](2,6)
            \class["2^2"](2,7)
            \class["2"](2,8)
            \class["2^2"](2,9)
        \class["2"](3,1)
        \class["2"](3,2)  
        \class["2^2"](3,5)
        \class["2"](3,6)
        \class["8"](3,7) 
        \class["8"](3,8)
        \class["2^2"](3,9)
            \class["2"](4,2)  
            \class["2"](4,5)
            \class["2^2"](4,6)
            \class["2"](4,7)
            \class["2^2"](4,8)
        \class["2"](5,0)
        \class["2"](5,1)  
        \class["2^2"](5,4)
        \class["2"](5,5)
        \class["8"](5,6) 
        \class["8"](5,7)
        \class["2^2"](5,8)
            \class["2"](6,1)  
            \class["2"](6,4)
            \class["2^2"](6,5)
            \class["2"](6,6)
            \class["2^2"](6,7)
        \class["2"](7,0)  
        \class["2^2"](7,3)
        \class["2"](7,4)
        \class["8"](7,5) 
        \class["8"](7,6)
        \class["2^2"](7,7)
            \class["2"](8,0)  
            \class["2"](8,3)
            \class["2^2"](8,4)
            \class["2"](8,5)
            \class["2^2"](8,6)
        \class["8\cdot2"](2,10)
        \class["8\cdot2"](4,9)
        \class["8\cdot2"](6,8)
        \class["8\cdot2"](8,7)
            \class["2"](3,10)
            \class["2"](5,9)
            \class["2"](7,8)
            \class["2"](5,10) 
            \class["2"](7,9)
            \class["2"](7,10)
        \d1(1,6)(0,7) \replacesource["2"] \replacetarget[] 
        \d1(3,2)(2,3) \replacesource[] \replacetarget[]
        \d1(3,6)(2,7) \replacesource[] \replacetarget["2"]
        \d1(3,8)(2,9) \replacesource["4"] \replacetarget["2"]
            \d1(4,6)(3,7) \replacesource["2"] \replacetarget["4"]
            \d1(4,8)(3,9) \replacesource["2"] \replacetarget["2"] 
        \d1(5,4)(4,5) \replacesource["2"] \replacetarget[]
        \d1(5,8)(4,9) \replacesource["2"] \replacetarget["4\cdot2"] 
            \d1(6,8)(5,9) \replacesource["8"] \replacetarget[]
        \d1(7,0)(6,1) \replacesource[] \replacetarget[] 
        \d1(7,4)(6,5) \replacesource[] \replacetarget["2"]
        \d1(7,6)(6,7) \replacesource["4"] \replacetarget["2"] 
            \d1(8,4)(7,5) \replacesource["2"] \replacetarget["4"]
            \d1(8,6)(7,7) \replacesource["2"] \replacetarget["2"]
        \d2(1,2)(0,4) \replacesource[] \replacetarget[]
        \d2(3,5)(2,7) \replacesource["2"] \replacetarget[]
        \d2(3,8)(2,10)  \replacetarget["4\cdot2"]
        \d2(4,6)(3,8) \replacesource[] \replacetarget[]
        \d2(5,6)(4,8) \replacesource["4"] \replacetarget[]
        \d2(6,5)(5,7) \replacesource[] \replacetarget["4"]
        \d2(6,6)(5,8) \replacesource[] \replacetarget[]
        \d2(7,5)(6,7) \replacesource["2"] \replacetarget[]
        \d2(7,6)(6,8) \replacesource[] \replacetarget["2"]
        \d2(8,5)(7,7) \replacesource[] \replacetarget[]
        \d3(2,6)(1,9) \replacesource[] \replacetarget["4"]
        \d3(5,4)(4,7) \replacesource[] \replacetarget[]
        \d4(1,6)(0,10) \replacesource[] \replacetarget["2"]
        \d4(3,5)(2,9) \replacesource[] \replacetarget[]
\end{sseqdata}
\begin{center}
    \printpage[name = cok$J_2$ sseq, title = 1st page, page=1] \\
    \printpage[name = cok$J_2$ sseq, title = 2nd page, page=2] \\
    \printpage[name = cok$J_2$ sseq, title = 3rd page, page=3] \\
    \printpage[name = cok$J_2$ sseq, title = 4th page, page=4] 
    
\end{center}

\newpage

\subsection{Spectral Sequence of $[\Sigma^k\cp^n,\coker J_3]$}
\label{subsec:sprectral-sequence-coker-J3}

\begin{sseqdata}[name = 3-primary sseq, xscale=1.3, yscale=0.7, x label={$k$}, y label={$n$}, classes={draw=none}]
            \class["\Z_3"](0,6) 
        \class["\Z_3"](1,4) \class["\Z_3"](1,9)
         \class["\Z_3"](2,5) 
        \class["\Z_3"](3,3) \class["\Z_3"](3,8)
        \class["\Z_3"](4,4) 
        \class["\Z_3"](5,2) \class["\Z_3"](5,7)
        \class["\Z_3"](6,3) 
        \class["\Z_3"](7,1) \class["\Z_3"](7,6)
         \class["\Z_3"](8,2)
        \class["\Z_3"](2,10)
        \class["\Z_3"](4,9)
        \class["\Z_3"](6,8)
        \class["\Z_3"](8,7)
         \class["\Z_3"](5,10) \class["\Z_3"](7,9)
        \d2(1,4)(0,6) \replacesource[] \replacetarget[]
        \d2(3,3)(2,5) \replacesource[] \replacetarget[] 
        \d2(7,1)(6,3) \replacesource[] \replacetarget[]
        \d2(5,7)(4,9) \replacesource[] \replacetarget[]
        \d2(7,6)(6,8) \replacesource[] \replacetarget[]
        \d4(4,4)(3,8) \replacesource[] \replacetarget[]
\end{sseqdata}
\begin{center}
    \printpage[name = 3-primary sseq, title = 2nd page,  page=2] \\
    \printpage[name = 3-primary sseq, title = 4th page,  page=4]
\end{center}

\begin{table}[ht!]
    \centering
    \begin{tabular}{|c|c|}
        \hline
         $r$ & $U(m,r)$  \\ \hline
         $2$ & $2/(m,2)$ \\ \hline
         $3$ & \thead{\normalsize $16/(m,8)(m,2)(m+5,8)$ \\ \normalsize $3/(m,3)$} \\ \hline
         $4$ & \thead{\normalsize $16/(m,8)(m,2)(m+5,8)$ $(*)$ \\ \normalsize $3/(m,3)$} \\ \hline
         $5$ & \thead{\normalsize $128/(m,64)(m+2,16)(m+1,16)(m+3,8)$ \\ \normalsize $9/(m,9)(m+1,3)$ \\ \normalsize $5/(m,5)$} \\ \hline
          & $(*)$ Multiplied by $2$ if $m=6$ \\ \hline
    \end{tabular}
    \caption{Complex James Numbers $U(m,r)$}
    \label{tab:james-numbers}
\end{table}

\newpage

\begin{table}[ht!]
    \centering
    \begin{tabular}{|c|c|} \hline
        $n$ & $V^2(\cp^n\times D^k)$  \\ \hline
        $1$ & $1\times1$ \\ \hline
        $2$ & $(1+x^2)\times1$ \\ \hline
        $3$ & $1\times1$ \\ \hline
        $4$ & $(1+x^2+x^4)\times1$ \\ \hline
        $5$ & $(1+x^4)\times1$ \\ \hline
        $6$ & $(1+x^2+x^6)\times1$ \\ \hline
    \end{tabular}
    \caption{Table of squares of Wu classes of $\cp^n\times D^k$}
    \label{tab:Wu-classes}
\end{table}

\begin{table}[ht!]
    \centering
    \begin{tabular}{|c|c|c|c|} \hline
        \diaghead{\theadfont aaaaaaaa}%
        {$l$}{$n$} & $1$ & $3$ & $5$ \\ \hline
        $2$ & 
        $\LARGE\blank$
        &
        $
        \begin{pmatrix}
            80 & 0 \\
            0 & 0
        \end{pmatrix}
        $
        & 
        $
        \begin{pmatrix}
            80 & 0 \\
            54 & 728 
        \end{pmatrix}
        $
        \\ \hline
        \diaghead{\theadfont aaaaaaaa}%
        {$l$}{$n$} & $2$ & $4$ & $6$ \\ \hline
        $1$ &
        $
        \begin{pmatrix}
            8
        \end{pmatrix}
        $
        &
        $
        \begin{pmatrix}
            8 & 0 \\
            12 & 80
        \end{pmatrix}
        $
        &
        $
        \begin{pmatrix}
            8 & 0 & 0 \\
            12 & 80 & 0 \\
            3 & 162 & 728
        \end{pmatrix}
        $
        \\ \hline
        $3$ &
        $
        \begin{pmatrix}
            80
        \end{pmatrix}
        $
        &
        $
        \begin{pmatrix}
            80 & 0 \\
            108 & 728
        \end{pmatrix}
        $
        & 
        $
        \begin{pmatrix}
            80 & 0 & 0 \\
            108 & 728 & 0 \\
            27 & 1458 & 6560
        \end{pmatrix}
        $
        \\ \hline
    \end{tabular}
    \caption{Table of operations $[\Sigma^{2l}\cp^n,\Psi^3-\id]$}
    \label{tab:Adams-id-operations}
\end{table}

\begin{table}[ht!]
    \centering
    \begin{tabular}{|c|c|c|c|} \hline
        \diaghead{\theadfont aaaaaaaa}%
        {$l$}{$n$} & $1$ & $3$ & $5$ \\ \hline
        $2$ & 
        $\LARGE\blank$
        &
        \thead{$\xi_1\mapsto3\mu_2+\{0,\mu_2\mu_0\}$ \\ $\in[\Sigma^4\cp^3,BO]_{(2)}/\Z_2\{\mu_2\mu_0\}$}
        & 
        \thead{
            $\xi_1\mapsto3\mu_2+\frac{5}{13}\mu_2\mu_0$ \\
            $\xi_2\mapsto\frac{9}{13}\mu_2\mu_0$
        }
        \\ \hline
        \diaghead{\theadfont aaaaaaaa}%
        {$l$}{$n$} & $2$ & $4$ & $6$ \\ \hline
        $1$ &
        $\xi_1\mapsto3\mu_1$
        &
        \thead{
            $\xi_1\mapsto3\mu_1+2\mu_1\mu_0$ \\
            $\xi_2\mapsto3\mu_1\mu_0$
        }   
        &
        \thead{
            $\xi_1\mapsto3\mu_1+2\mu_1\mu_0$ \\
            $\xi_2\mapsto3\mu_1\mu_0-\frac{3}{13}\mu_1\mu_0^2$ \\
            $\xi_3\mapsto\frac{9}{13}\mu_1\mu_0^2$
        }   
        \\ \hline
        $3$ &
        $\xi_1\mapsto3\mu_3$
        &
        \thead{
            $\xi_1\mapsto3\mu_3+\frac{1}{13}\mu_3\mu_0$ \\
            $\xi_2\mapsto\frac{9}{13}\mu_3\mu_0$
        }   
        & 
        \thead{
            $\xi_1\mapsto3\mu_3+\frac{1}{13}\mu_3\mu_0+\frac{6}{533}\mu_3\mu_0^2$ \\
            $\xi_2\mapsto\frac{9}{13}\mu_3\mu_0-\frac{63}{533}\mu_3\mu_0^2$ \\
            $\xi_3\mapsto\frac{3}{41}\mu_3\mu_0^2$
        }   
        \\ \hline
    \end{tabular}
    \caption{\centering Table of images of generators of $\ker[\Sigma^{2l}\cp^n,J]_{(2)}$ under $[\Sigma^{2l}\cp^n,\Psi^3-\id]_{(2)}^{-1}$}
    \label{tab:inverse-Adams-id-generators}
\end{table}

\begin{table}[ht!]
    \centering
    \begin{tabular}{|c|c|}
        \hline
        $\Sigma^{2}\cp^6$ & \thead{$W(\mu_1)=1+(x+x^2+x^3+x^4+x^5+x^6)\times y$ \\ $W(\mu_1\mu_0)=W(\mu_1\mu_0^2)=1$} \\ \hline
        $\Sigma^{4}\cp^6$ & \thead{$W(\mu_2)=1+(x^2+x^4+x^6)\times y^2$ \\ $W(\mu_2\mu_0)=W(\mu_2\mu_0^2)=1$} \\ \hline
        $\Sigma^{6}\cp^6$ & $W(\mu_3)=W(\mu_3\mu_0)=W(\mu_3\mu_0^2)=1$ \\ \hline
    \end{tabular}
    \caption{\centering Table of total Stiefel-Whitney classes of generators of $[\Sigma^{2l}\cp^6,BO]$}
    \label{tab:S-W-classes-generators}
\end{table}

\newpage

%% file: bibliography.bib
@article{Fujii,
  title={{KO}-groups of projective spaces},
  author={Fujii, Michikazu},
  journal={Osaka J. Math},
  volume={4},
  pages={141--149},
  year={1967}
}

@article{adamsIV,
title = {On the groups J(X)—IV},
journal = {Topology},
volume = {5},
number = {1},
pages = {21-71},
year = {1966},
author = {J.F. Adams}
}

@article{Adams_vect,
title = {Vector fields on spheres},
journal = {Topology},
volume = {1},
number = {1},
pages = {63-65},
year = {1962},
author = {J. F. Adams}
}

@book{mccleary2001user,
  title={A user's guide to spectral sequences},
  author={McCleary, John},
  number={58},
  year={2001},
  publisher={Cambridge University Press}
}

@article{Imanishi,
  title={{Unstable Homotopy Groups of Classical Groups} (odd primary components).},
  author={Hideki Imanishi},
  journal={Journal of Mathematics of Kyoto University},
  year={1967},
  volume={7},
  pages={221-243},
}

@article{toda1959p,
  title={$ p $-primary components of homotopy groups IV. Compositions and toric constructions},
  author={Toda, Hirosi},
  journal={Memoirs of the College of Science, University of Kyoto. Series A: Mathematics},
  volume={32},
  number={2},
  pages={297--332},
  year={1959},
  publisher={Duke University Press}
}

@book{toda1962composition,
  title={Composition methods in homotopy groups of spheres},
  author={Toda, Hiroshi},
  number={49},
  year={1962},
  publisher={Princeton University Press}
}

@article{Rourke,
 author = {C. P. Rourke and D. P. Sullivan},
 journal = {Annals of Mathematics},
 number = {3},
 pages = {397--413},
 publisher = {[Annals of Mathematics, Trustees of Princeton University on Behalf of the Annals of Mathematics, Mathematics Department, Princeton University]},
 title = {On the Kervaire Obstruction},
 volume = {94},
 year = {1971}
}

@article{Hambleton,
author = {Hambleton, I. and Milgram, Richard and Taylor, Laurence and Williams, Bruce},
year = {1988},
title = {Surgery with Finite Fundamental Group},
volume = {56},
journal = {Proceedings of the London Mathematical Society},
}

@article{brumfiel_transfer,
	title = {Evaluation of the transfer and the universal surgery classes},
	volume = {32},
	number = {2},
	journal = {Inventiones mathematicae},
	author = {Brumfiel, G. and Madsen, I.},
	year = {1976},
	pages = {133--169},
}

@article{brumfiel_PL,
 author = {G. Brumfiel and I. Madsen and R. J. Milgram},
 journal = {Annals of Mathematics},
 number = {1},
 pages = {82--159},
 publisher = {[Annals of Mathematics, Trustees of Princeton University on Behalf of the Annals of Mathematics, Mathematics Department, Princeton University]},
 title = {PL Characteristic Classes and Cobordism},
 volume = {97},
 year = {1973}
}

@Book{Madsen,
  author    = {Madsen, Ib and Milgram, R. James},
  publisher = {Princeton University Press, Princeton, NJ},
  title     = {The classifying spaces for surgery and cobordism of manifolds},
  year      = {1979},
  series    = {Ann. {Math}. {Stud}.},
  volume    = {92},
}

@article{imaoka1984stable,
  title={On the Stable Hurewicz Image of Some Stunted Projective Spaces, I},
  author={Imaoka, Mitsunori and Morisugi, Kaoru},
  journal={Publications of the Research Institute for Mathematical Sciences},
  volume={20},
  number={4},
  pages={839--852},
  year={1984},
  publisher={Research Institute forMathematical Sciences}
}

@inproceedings{lundell,
  title={Concise tables of James numbers and some homotopy of classical Lie groups and associated homogeneous spaces},
  author={Lundell, Albert T},
  booktitle={Algebraic Topology Homotopy and Group Cohomology: Proceedings of the 1990 Barcelona Conference on Algebraic Topology, 1990},
  pages={250--272},
  year={2006},
  organization={Springer}
}

@article{Atiyah_Hirzebruch, 
    title={Bott Periodicity and the Parallelizability of the spheres}, 
    volume={57}, 
    number={2}, 
    journal={Mathematical Proceedings of the Cambridge Philosophical Society}, 
    author={Atiyah, M. F. and Hirzebruch, F.}, 
    year={1961}, 
    pages={223–226}
}

@book{milnor1974characteristic,
  title={Characteristic classes},
  author={Milnor, John Willard and Stasheff, James D},
  number={76},
  year={1974},
  publisher={Princeton university press}
}

@book{hatcher2002algebraic,
  title={Algebraic Topology},
  author={Hatcher, A.},
  isbn={9780521795401},
  lccn={00065166},
  series={Algebraic Topology},
  url={https://books.google.sk/books?id=BjKs86kosqgC},
  year={2002},
  publisher={Cambridge University Press}
}

@article{article,
author = {Mahowald, Mark and Ravenel, Douglas},
year = {1984},
pages = {},
title = {Toward a Global Understanding of the Homotopy Groups of Spheres},
isbn = {9780821850633}
}

@article{quillen1971adams,
  title={The Adams conjecture},
  author={Quillen, Daniel},
  journal={Topology},
  volume={10},
  number={1},
  pages={67--80},
  year={1971}
}

@Article{Brumfiel1971,
  author   = {Brumfiel, G.},
  journal  = {Commentarii Mathematici Helvetici},
  title    = {Homotopy equivalences of almost smooth manifolds},
  year     = {1971},
  pages    = {381--407},
  volume   = {46},
}

@misc{kasilingam2026diffeomorphismclassificationsmoothstructures,
      title={Diffeomorphism Classification of Smooth Structures and Tangential Homotopy Types of $\mathbb{C}P^m$ for $5 \le m \le 8$}, 
      author={Ramesh Kasilingam},
      year={2026},
      eprint={2604.27521},
      archivePrefix={arXiv},
      primaryClass={math.AT},
      url={https://arxiv.org/abs/2604.27521}, 
}

@misc{kaluzny2026highersmoothsurgerystructure,
      title={Higher Smooth Surgery Structure Sets of Complex Projective Spaces, Part I}, 
      author={Samuel Kalužný and Tibor Macko},
      year={2026},
      eprint={2605.04817},
      archivePrefix={arXiv},
      primaryClass={math.AT},
      url={https://arxiv.org/abs/2605.04817}, 
}

@article{Kachi2001SomeCG,
  title={Some Cohomotopy Groups of Suspended Projective Planes},
  author={Hideyuki Kachi and Jun Mukai and T. Nozaki and Yukimasa Sumita and Dai Tamaki},
  journal={Mathematical journal of Okayama University},
  year={2001},
  volume={43}
}

@article{mukai1969stable,
  title={On the stable homotopy of a Z$\backslash$sb2-Moore space},
  author={Mukai, Juno},
  year={1969}
}

@book{kochman2006stable,
  title={Stable homotopy groups of spheres: a computer-assisted approach},
  author={Kochman, Stanley O},
  year={2006},
  publisher={Springer}
}

@article{Behrens-Hill-Hopkins-Mahowald,
author = {Behrens, M. and Hill, M. and Hopkins, M. J. and Mahowald, M.},
title = {Detecting exotic spheres in low dimensions using coker J},
journal = {Journal of the London Mathematical Society},
volume = {101},
number = {3},
pages = {1173-1218},
doi = {https://doi.org/10.1112/jlms.12301},
year = {2020}
}

@incollection{weiss-williams-automorph,
 author = {Weiss, Michael and Williams, Bruce},
 title = {Automorphisms of manifolds},
 booktitle = {Surveys on surgery theory. Vol. 2: Papers dedicated to C. T. C. Wall on the occasion of his 60th birthday},
 isbn = {0-691-08815-2; 0-691-08814-4},
 pages = {165--220},
 year = {2001},
 publisher = {Princeton, NJ: Princeton University Press},
 }

@article{sasao-homotopy-of-map-cp,
author = {Sasao, Seiya},
title = {The Homotopy of Map(Cpm, CPn)},
journal = {Journal of the London Mathematical Society},
volume = {s2-8},
number = {2},
pages = {193-197},
doi = {https://doi.org/10.1112/jlms/s2-8.2.193},
year = {1974}
}

@book{brown-group-cohomology,
 author = {Brown, Kenneth S.},
 title = {Cohomology of groups},
 fseries = {Graduate Texts in Mathematics},
 series = {Grad. Texts Math.},
 issn = {0072-5285},
 volume = {87},
 year = {1982},
 publisher = {Springer, Cham},
 language = {English}
}

@article{cerf1970,
  title={La stratification naturelle des espaces de fonctions diff{\'e}rentiables r{\'e}elles et le th{\'e}oreme de la pseudo-isotopie},
  author={Cerf, Jean},
  journal={Publications Math{\'e}matiques de l'IH{\'E}S},
  volume={39},
  pages={5--173},
  year={1970}
}

@book{burghelea2006,
  title={Groups of automorphisms of manifolds},
  author={Burghelea, Dan and Lashof, Richard and Rothenberg, Melvin},
  year={2006},
  publisher={Springer}
}

@misc{kreck2024,
      title={Mapping class group of manifolds which look like $3$-dimensional complete intersections}, 
      author={Matthias Kreck and Yang Su},
      year={2024},
      eprint={2009.08054},
      archivePrefix={arXiv},
      primaryClass={math.GT},
      url={https://arxiv.org/abs/2009.08054}, 
}

@misc{randal-williams,
  author       = {O. Randal-Williams},
  title        = {Answer to the MathOverflow question: Is the mapping class group of $\cp^n$ known?},
    note         = {Accessed: 2026-08-26},
  howpublished = {\url{https://mathoverflow.net/questions/349319/is-the-mapping-class-group-of-bbbcpn-known}}
}
